\documentclass[a4paper]{amsart}
\usepackage{amssymb}
\usepackage[all]{xy}
\usepackage{hyperref, aliascnt}
\usepackage{bbm}
\usepackage{stmaryrd}

%==========================================================================================

%\theoremstyle{plain}
\newcounter{theoremintro}
\newtheorem{thmintro}[theoremintro]{Theorem}

\newtheorem{lma}{Lemma}[section]

\newaliascnt{thmCt}{lma}
\newtheorem{thm}[thmCt]{Theorem}
\aliascntresetthe{thmCt}

\newaliascnt{corCt}{lma}
\newtheorem{cor}[corCt]{Corollary}
\aliascntresetthe{corCt}

\newaliascnt{prpCt}{lma}
\newtheorem{prp}[prpCt]{Proposition}
\aliascntresetthe{prpCt}

\newtheorem*{thm*}{Theorem}
\newtheorem*{cor*}{Corollary}
\newtheorem*{prop*}{Proposition}

\theoremstyle{definition}

\newaliascnt{pgrCt}{lma}
\newtheorem{pgr}[pgrCt]{}
\aliascntresetthe{pgrCt}

\newaliascnt{dfnCt}{lma}
\newtheorem{dfn}[dfnCt]{Definition}
\aliascntresetthe{dfnCt}

\newaliascnt{rmkCt}{lma}
\newtheorem{rmk}[rmkCt]{Remark}
\aliascntresetthe{rmkCt}

\newaliascnt{rmksCt}{lma}

\aliascntresetthe{rmksCt}

\newaliascnt{egCt}{lma}
\newtheorem{eg}[egCt]{Example}
\aliascntresetthe{egCt}

\newaliascnt{qstCt}{lma}

\aliascntresetthe{qstCt}

\newaliascnt{pbmCt}{lma}

\aliascntresetthe{pbmCt}

\newaliascnt{ntnCt}{lma}
\newtheorem{ntn}[ntnCt]{Notation}
\aliascntresetthe{ntnCt}

\def\today{\number\day\space\ifcase\month\or   January\or February\or
   March\or April\or May\or June\or   July\or August\or September\or
   October\or November\or December\fi\   \number\year}

%==========================================================================================

\DeclareMathOperator{\Lt}{L}
\DeclareMathOperator{\Rt}{R}
\DeclareMathOperator{\Gl}{Gl}

\newcommand{\NN}{{\mathbb{N}}}

\newcommand{\CC}{{\mathbb{C}}}
\newcommand{\RR}{{\mathbb{R}}}
\newcommand{\TT}{{\mathbb{T}}}

\newcommand{\Isom}{{\mathrm{Isom}}}
\newcommand{\id}{{\mathrm{id}}}

\newcommand{\im}{{\mathrm{Im}}}
\newcommand{\re}{{\mathrm{Re}}}
\newcommand{\supp}{{\mathrm{supp}}}
\newcommand{\Aut}{{\mathrm{Aut}}}
\newcommand{\tensMax}{\hat{\otimes}}
\newcommand{\ca}{$C^*$-algebra}
\newcommand{\lcg}{locally compact group}
\newcommand{\Bdd}{\mathcal{B}}

\newcommand{\frakA}{\mathfrak{A}}
\newcommand{\weakStar}{weak${}^*$}
\newcommand{\Unitary}{\mathcal{U}}
\newcommand{\charFct}{\mathbbm{1}}
\newcommand{\andSep}{\quad \text{and}\quad}
\newcommand{\PF}{PF}
\newcommand{\PM}{PM}
\newcommand{\CV}{CV}
\newcommand{\level}[1]{\llbracket #1 \rrbracket}
\newcommand{\llevel}[1]{\left\llbracket #1 \right\rrbracket}

%==========================================================================================
%==========================================================================================
\begin{document}

%==========================================================================================
\title{Isomorphisms of Algebras of Convolution Operators}

%==========================================================================================
\author[Eusebio Gardella]{Eusebio Gardella}
\address{Eusebio Gardella
Mathematisches Institut, Fachbereich Mathematik und Informatik der
Universit\"at M\"unster, Einsteinstrasse 62, 48149 M\"unster, Germany.}
\email{gardella@uni-muenster.de}
\urladdr{http://www.math.uni-muenster.de/u/gardella/}

\author{Hannes Thiel}
\address{Hannes Thiel
Mathematisches Institut, Fachbereich Mathematik und Informatik der
Universit\"at M\"unster, Einsteinstrasse 62, 48149 M\"unster, Germany.}
\email{hannes.thiel@uni-muenster.de}
\urladdr{http://www.math.uni-muenster.de/u/hannes.thiel/}

%==========================================================================================

%==========================================================================================
\thanks{The first named author was partially supported by the D.~K. Harrison Prize from the University of Oregon,
and by a postdoctoral fellowship from the Humboldt Foundation.
Both authors were partially supported by the Deutsche Forschungsgemeinschaft (SFB 878 Groups, Geometry \& Actions).}

%==========================================================================================
\subjclass[2010]{Primary:
22D20, %Representations of group algebras
43A15. %$L^p$-spaces and other function spaces on groups, semigroups, etc.
Secondary:
43A65, %Representations of groups, semigroups, etc.
%46E30, %Spaces of measurable functions ($L^p$-spaces, Orlicz spaces, Köthe function spaces, Lorentz spaces, rearrangement invariant spaces, ideal spaces, etc.)
47L10. %Algebras of operators on Banach spaces and other topological linear spaces
}

%==========================================================================================
\keywords{$L^p$-space, Banach algebra, locally compact group, $p$-convolvers, amenability, weak-$\ast$ topology, reflexivity}

%==========================================================================================
\begin{abstract}
For $p,q\in [1,\infty)$, we study the isomorphism problem for
the $p$- and $q$-convolution algebras associated to locally compact groups.
While it is well known that not every group can be recovered from its
group von Neumann algebra, we show that this is the case for the algebras
$\CV_p(G)$ of $p$-convolvers and $\PM_p(G)$ of $p$-pseudomeasures, for $p\neq 2$.
More generally, we show that if $\CV_p(G)$ is isometrically
isomorphic to $\CV_q(H)$, with $p,q\neq 2$, then $G$ must be isomorphic to
$H$ and $p$ and $q$ are either equal or conjugate. This implies that there 
is no $L^p$-version of Connes' uniqueness of the hyperfinite II$_1$-factor. 
Similar results apply to the algebra $\PF_p(G)$ of $p$-pseudofunctions, generalizing a classical result of Wendel.
We also show that other $L^p$-rigidity results for groups can be easily
recovered and extended using our main theorem.

Our results answer questions originally formulated in the work of Herz in the 70's.
Moreover, our methods reveal new information about the Banach algebras in
question. As a non-trivial application, we verify the reflexivity conjecture for
all Banach algebras lying between $\PF_p(G)$ and $\CV_p(G)$: if any such algebra
is reflexive and amenable, then $G$ is finite. 
\end{abstract}

\maketitle

%\addtocontents{toc}{\setcounter{tocdepth}{1}}
%\tableofcontents
%==========================================================================================
%==========================================================================================

\renewcommand*{\thetheoremintro}{\Alph{theoremintro}}
\section{Introduction}
\label{sec:Intro}
Convolution algebras of groups are among the most important and widely studied examples of Banach algebras.
For a locally compact group $G$, the algebras that have arguably received the greatest attention are
the function algebra $L^1(G)$, the measure algebra $M(G)$, the reduced group \ca\ $C^*_\lambda(G)$, and
the group von Neumann algebra $L(G)$. A significant part of the literature in this area has focused on
identifying those properties of a group that are reflected on its convolution algebras. An early and
illustrative instance of this is Johnson's celebrated result \cite{Joh72CohomologyBAlg} asserting that a locally compact
group $G$ is amenable if and only if $L^1(G)$ is amenable as a Banach algebra. Similar results have
been obtained for $M(G)$, $C_\lambda^*(G)$ and $L(G)$, at least when $G$ is discrete.

The isomorphism problem in Harmonic Analysis asks to determine when two groups have isometrically isomorphic
convolution algebras. This problem has received a great deal of attention, particularly in what refers to
identifying those groups that can be \emph{recovered} from one of their convolution algebras. The first
result in this direction is Wendel's classical result \cite{Wen51IsometrIsoGpAlgs}, asserting that $L^1(G)$ is isometrically
isomorphic to $L^1(H)$ if and only if $G$ is (topologically) isomorphic to $H$. Shortly after, Johnson
used Wendel's theorem to prove a similar result for the measure algebra \cite{Joh_isometric_1964}. The situation for the
operator algebras $L(G)$ and $C^*_\lambda(G)$ is, however, more complicated. For once, not every group can be recovered from
its von Neumann algebra, or even from its reduced group \ca: consider, for example, $\mathbb{Z}_2\oplus\mathbb{Z}_2$ and
$\mathbb{Z}_4$. More drastically, Connes' celebrated result on uniqueness of the hyperfinite II$_1$-factor
implies that any two countable amenable ICC groups have isomorphic group von Neumann algebras. 
There also exist groups that can be recovered from their $C^*$-algebras but not from
their von Neumann algebras (such as $\mathbb{Z}$). Positive results in this context (usually referred to as
\emph{superrigidity} results) are difficult to find: for von Neumann algebras, the first one is
the groundbreaking work of Ioana-Popa-Vaes (\cite{IoaPopVae_class_2013}) on wreath product groups, while non-trivial results for
$C^*$-algebras are even more recent (\cite{EckRau_superrigidity_2018, KnuRauThiWhi8pre:RigidVirtAbln}). Indeed, the passage
from $C^*_\lambda(G)$ to $L(G)$ tends to ``erase'' a lot of information about $G$. For example, while every countable,
torsion-free, abelian group is recovered from its reduced group $C^*$-algebra, all such groups have isomorphic
group von Neumann algebras. Similarly, while it is known that $\mathbb{F}_n$ and $\mathbb{F}_m$ have
non-isomorphic group $C^*$-algebras, whether this is the case for their von Neumann algebras is a notable
open problem.

In this work, we are interested in the $L^p$-version of the problems described above. Given $p\in [1,\infty)$
and a locally compact group $G$, we let $\lambda_p$ be the representation of $L^1(G)$ on $L^p(G)$
given by left convolution. The algebra of \emph{$p$-pseudofunctions} $\PF_p(G)$ is the Banach algebra generated
by $\lambda_p(L^1(G))$; the algebra of \emph{$p$-convolvers} $\CV_p(G)$ is the double commutant of $\PF_p(G)$;
  and, for $p>1$, the algebra of \emph{$p$-pseudomeasures} $\PM_p(G)$ is the \weakStar-closure\footnote{Identifying
$\mathcal{B}(L^p(G))$ with the dual of $L^p(G)\widehat{\otimes}L^{p'}(G)$ in a canonical way.} of $\PF_p(G)$.
We thus obtain a continuously varying family of Banach algebras which for $p=1$ give the
algebras $L^1(G)$ and $M(G)$, and for $p=2$ agree with $C^*_\lambda(G)$ and $L(G)$.

These algebras were introduced by Herz \cite{Her73SynthSubgps} several decades ago, and have been
intensively studied by a number of authors; see \cite{Her76AsymNormsConv, CowFou76InclNonincl, Der11ConvOps, DawSpr14arX:ApproxPropConvPM}.
In recent years, the influx of operator-algebraic techniques has given the area new impetus; see
\cite{Phi13arX:LpCrProd, Phi14pre:LpMultDom, GarThi15GpAlgLp, GarThi18:ReprConvLq}. Among these, we
mention the solution \cite{GarThi16QuotBAlgLp} to a long-standing open problem of Le Merdy: for $p\neq 2$, there exists a quotient
of $\PF_p(\mathbb{Z})$ which cannot be represented on an $L^p$-space.

Despite the advances, a number of questions remain open, not
least due to the difficulties in understanding the geometry of $L^p$-spaces (by comparison with the case $p=2$).
A significant problem in the area, known as the \emph{convolvers and pseudomeasures problem} and originally raised
by Herz, asks to determine if the double-commutant theorem holds for $\PM_p(G)$, that is, whether it is always true that $\PM_p(G)=\CV_p(G)$.
This is known to be the case for $p=2$ (regardless of $G$), and whenever $G$ has the approximation
property (regardless of $p$); see \cite{Cow_predual_1998, DawSpr14arX:ApproxPropConvPM}.

Another problem, also raised by Herz, is the isomorphism question
for these algebras. In its most general
form, the problem is to determine, for $p,q\in [1,\infty)$ and locally compact groups $G$ and $H$, when there exists
an isometric isomorphism $\CV_p(G)\cong \CV_q(H)$ (or $\PM_p(G)\cong \PM_q(H)$, or $\PF_p(G)\cong \PF_q(H)$).
Earlier results \cite{Her73SynthSubgps,Der11ConvOps} mostly focused on duality theory and offered partial answers
when $G=H$; this case was recently
settled by the authors in \cite{GarThi18:ReprConvLq}. In this work, we focus on the remaining and arguably most
difficult part of this problem, namely deciding when two groups have isometrically isomorphic $p$-convolution
algebras.

The outcome is not a priori clear. Indeed, $p$-convolution algebras tend to behave more
like the case $p=2$ (group $C^*$-algebras and von Neumann algebras) than like the case $p=1$ (function and
measure algebras), mostly thanks to either reflexivity, uniform convexity, or interpolation for operators on
$L^p$-spaces.
Some examples of this are as follows:
\begin{itemize}
 \item For a Powers group $G$ and $p\in (1,\infty)$, the Banach algebra $\PF_p(G)$ is simple,
 while this fails for $p=1$; see \cite{PoyHej14arX:SimpleLp}.
 \item For $p\in (1,\infty)$, amenability of $G$ is characterized by the fact that $\PF_p(G)$ is universal with respect to
 representations of $G$ on $L^p$-spaces, while this fails for $p=1$; see \cite{GarThi15GpAlgLp}.
 \item There exists, for $p\in (1,\infty)$, an analog of Choi's multiplicative domain theorem from \cite{Cho_schwarz_1974}
 for $p$-completely contractive maps from $\PF_p(G)$, $\PM_p(G)$ or $\CV_p(G)$, while this fails for $p=1$;
 see \cite{Phi14pre:LpMultDom}.
 \item For a discrete group $G$ with trivial amenable radical and $p\in (1,\infty)$, the Banach algebras
 $\PF_p(G)$, $\PM_p(G)$ and $\CV_p(G)$ have a unique (\weakStar-continuous) tracial state\footnote{Since this
 result is not available in the literature, we outline its proof here.
 By Theorem~5.2 in \cite{Haa_new_2016}, triviality of the amenable radical is equivalent to a norm inequality in $\Bdd(\ell^2(G))$
 for a convex combination of unitaries coming from the left regular representation. Using the Riesz-Thorin interpolation theorem, it is
 easy to see that this condition is equivalent to an identical norm inequality in $\Bdd(\ell^p(G))$, for $p\in (1,\infty)$.
 Once this is established, it follows that any linear functional on $\PF_p(G)$ which is ``unitary invariant" must
 vanish on the non-trivial group elements. From this, uniqueness of the trace on $\PF_p(G)$ can be deduced immediately.
 The arguments for \weakStar-continuous traces on $\CV_p(G)$ and $\PM_p(G)$ are analogous.},
 while this fails for $p=1$.
% \item Any representation of $\PF_p(G)$ on an $L^p$-space, for $p\neq 1$, admits a contractive projection onto
% its essential subspace (fails for $p=1$); see \cite{}.
\end{itemize}

It is therefore not clear whether one should expect a complete rigidity phenomenon as in the results of
Wendel-Johnson for $p=1$, a very restricted form of rigidity as in the case of operator algebras, or something
intermediate. As it turns out, every locally compact group is $L^p$-rigid, for $p\neq 2$, in a strong sense.
In fact, more can be said, and a particular case of our main result can be stated as follows.

\begin{thmintro}\label{thmintro:rigidity} (\autoref{prp:MorCVp:IsoGpHq})
Let $G$ and $H$ be locally compact groups, and let $p,q\in [1,\infty)$ be not both equal to 2.
Suppose there is an isometric isomorphism
$\CV_p(G)\cong \CV_q(H)$ (or $\PM_p(G)\cong \PM_q(H)$, or $\PF_p(G)\cong \PF_q(H)$).
Then $G$ is isomorphic to $H$ and $p$ and $q$ are either equal or conjugate.
\end{thmintro}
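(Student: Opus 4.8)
The plan is to recover from the abstract Banach-algebra structure both the group $G$ (as a topological group) and the unordered pair $\{p,p'\}$, the decisive tool being the rigidity of surjective isometries of $L^p$-spaces for $p\neq 2$ (Lamperti's theorem) together with the description of $\CV_p(G)$ as the commutant $\rho_p(G)'$ of the right regular representation on $L^p(G)$, and the chain $\PF_p(G)\subseteq \PM_p(G)\subseteq \CV_p(G)$. Some preliminary reductions come first. Discreteness of $G$ is equivalent to unitality of the algebra, hence an invariant; the $\PF_p$-statement reduces to the others by passing to multiplier algebras, since $M(\PF_p(G))$ lies between $\PF_p(G)$ and $\CV_p(G)$, contains all translations, and sits in $\Bdd(L^p(G))$ with the operator norm; and the degenerate case in which one of the exponents equals $2$ can be isolated (it forces the groups of invertible isometries of the two algebras to be isomorphic as topological groups, which forces both $G$ and $H$, hence both algebras, to be trivial).

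Next I would recover $\{p,p'\}$. For any nontrivial $G$ there is a nontrivial closed subgroup $K$ isomorphic to $\ZZ/n$ (if $G$ has a torsion element) or to $\ZZ$; by Herz's restriction theorem $\PF_p(K)$ embeds isometrically into $\CV_p(G)$, and the Banach-algebra structure of $\PF_p(\ZZ/n)$ (resp.\ $\PF_p(\ZZ)$) is invariant under $p\leftrightarrow p'$ but otherwise determines $p$ — concretely, already the norm of a single well-chosen element is, by the Riesz--Thorin interpolation theorem, a strictly convex function of $1/p$ which is symmetric about $1/2$ and nonconstant, so it separates the classes $\{p,p'\}$. Having matched $\{p,p'\}$ with $\{q,q'\}$, and using the canonical isometric anti-isomorphism $\CV_q(H)\to\CV_{q'}(H)$ given by transposition of operators (post-composed with $h\mapsto h^{-1}$ on $H$ to turn an anti-isomorphism into an isomorphism), we may and do assume from now on that $p=q$, and in particular $p\neq 2$.

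The core step is to show that the group $\mathcal U(A):=\{a\in A:a\text{ invertible},\ \|a\|=\|a^{-1}\|=1\}$ — which is manifestly preserved by any isometric isomorphism — satisfies $\mathcal U(\CV_p(G))=\{\zeta\lambda_p(g):\zeta\in\TT,\ g\in G\}$. Indeed, such an $a$ is a surjective isometry of $L^p(G)$ lying in $\CV_p(G)$; by Lamperti's theorem $a\xi=h\cdot(\xi\circ\sigma^{-1})$ for a non-singular Borel automorphism $\sigma$ of $G$ and a measurable function $h$; the requirement that $a$ commute with every right translation $\rho_p(s)$ forces $\sigma$ to be equivariant for the right regular action, and since that action is simply transitive this makes $\sigma$ a left translation, after which $h$ must be a unimodular constant, so $a\in\TT\cdot\lambda_p(g_0)$. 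Consequently an isometric isomorphism $\varphi\colon\CV_p(G)\to\CV_p(H)$ restricts to a group isomorphism $\TT\cdot\lambda_p(G)\to\TT\cdot\lambda_p(H)$ carrying the scalars onto the scalars (being unital), hence induces an abstract isomorphism $G\cong\mathcal U(\CV_p(G))/\TT\to\mathcal U(\CV_p(H))/\TT\cong H$; for discrete groups this already finishes the proof. For general locally compact $G$ one still has to match the topologies: here the topology of $G$ is precisely the weak-$\ast$ topology induced on $\TT\cdot\lambda_p(G)$ from the canonical predual of $\PM_p(G)$ (resp.\ $\CV_p(G)$), so one concludes by invoking the automatic weak-$\ast$-continuity of isometric isomorphisms of these algebras — equivalently, by checking that the induced abstract isomorphism is Borel and appealing to automatic continuity for locally compact groups. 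With $G\cong H$ and $p=q$ in hand, the conclusion ``$p$ and $q$ equal or conjugate'' is immediate from the first reduction; alternatively, once $G\cong H$ is known, the exponent statement follows from the previously settled case $G=H$ of \cite{GarThi18:ReprConvLq}.

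I expect the main obstacle to be the rigidity statement $\mathcal U(\CV_p(G))=\TT\cdot\lambda_p(G)$: the Lamperti analysis must be carried out inside the (possibly proper) subalgebras $\PM_p(G)$ and $M(\PF_p(G))$ of $\Bdd(L^p(G))$ rather than in the full operator algebra, and one must keep careful track of the coordinate function $h$ under the commutation condition. The secondary difficulty is the passage from an abstract to a topological group isomorphism in the non-discrete case. The case $p=1$ requires no special treatment beyond observing that Lamperti's theorem still applies there (so that the argument specializes to Wendel's theorem for $M(G)$).
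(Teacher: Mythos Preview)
Your core strategy---identify $\mathcal{U}(\CV_p(G))$ via Lamperti as $\TT\cdot\lambda_p(G)$, then recover $G$ as $\mathcal{U}/\TT$---is exactly the paper's approach, and your identification of the two genuine difficulties (carrying out Lamperti rigorously in this generality, and upgrading the abstract group isomorphism to a topological one) is accurate. A few points, however, are off.

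First, the order of the argument. You try to pin down $\{p,p'\}$ \emph{before} the unitary-group analysis, by embedding $\PF_p(K)$ for $K\cong\ZZ/n$ or $\ZZ$ via Herz restriction. This has a genuine gap: not every nontrivial locally compact group has such a closed subgroup---$\QQ_\ell$ is torsion-free and its proper closed subgroups are all compact (copies of $\ZZ_\ell$), so neither $\ZZ/n$ nor $\ZZ$ occurs. More fundamentally, before you have identified the group elements inside the abstract algebra there is no intrinsic way to single out the Herz-embedded copy of $\PF_p(K)$, so the norm computation you propose cannot be carried out. The paper avoids this entirely by reversing the order: since the hypothesis ``not both equal to $2$'' lets one run Lamperti on both sides independently (once the degenerate case is disposed of), one first obtains $G\cong H$ as topological groups, and only then invokes the same-group result of \cite{GarThi18:ReprConvLq} to conclude $\{p,p'\}=\{q,q'\}$. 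You mention this route as an ``alternative'' in your last sentence; in fact it is the main line, and your exponent-first detour should be dropped.

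Second, two smaller corrections. The claim that discreteness of $G$ is equivalent to unitality is false for $\CV_p(G)$ and $\PM_p(G)$: both are always unital. And the phrase ``automatic weak-$\ast$-continuity of isometric isomorphisms of these algebras'' overstates what is known---uniqueness of the predual of $\CV_p(G)$ is open. The paper does not prove that the full isomorphism is weak-$\ast$ continuous; instead it shows (\autoref{prp:MorCVp:MorFromCVpInducesGpHomo}) the weaker but sufficient fact that the restriction to $\lambda_p(G)$ is weak-$\ast$ continuous, by checking that for $s_j\to s$ the net $\lambda_p(s_j)$ converges to $\lambda_p(s)$ in norm after multiplying by any $\lambda_p(f)$, $f\in L^1(G)$, and that such elements are left essential. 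Your proposed alternative via Borel measurability and automatic continuity for locally compact groups is a reasonable substitute, but would itself need a line of justification.
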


We want to highlight the striking fact that even the very large \weakStar-closed algebras $\CV_p(G)$ and $\PM_p(G)$
remember $G$ completely, which shows a stark contrast with the case $p=2$ of group von Neumann algebras discussed above.
In particular, the theorem above rules out any $L^p$-analog of Connes' uniqueness of the hyperfinite II$_1$-factor, 
for $p\neq 2$.

There exist other results on $L^p$-rigidity of groups, although of a rather different flavor.
Indeed, Wendel's appraised result motivated a number of authors to search for $L^p$ analogs of it. A series of
intermediate results culminated in Parrott's theorem (\cite[Theorem~2]{Par68IsoMultiplier}) asserting that for locally
compact groups $G$ and $H$, and for $p\in (1,\infty)\setminus\{2\}$, if there is an invertible linear
isometry $T\colon L^p(G)\to L^p(H)$
such that $T(\xi\ast\eta)=T(\xi)\ast T(\eta)$ whenever $\xi,\eta,\xi\ast\eta\in L^p(G)$, and
similarly for $T^{-1}$, then $G$ and $H$
are isomorphic. In \autoref{prp:Parrott}, we show that our Theorem~\ref{thmintro:rigidity} is formally
stronger, by deducing Parrott's result as a corollary.

%Observe that there are no restrictions on $G$ and $H$ above other than local compactness. In
%particular, our result includes non-second countable groups as well as groups whose Haar measures
%are not $\sigma$-finite.
We briefly discuss the proof of Theorem~\ref{thmintro:rigidity}.
%Using the results from \cite{GarThi18:ReprConvLq}, we may assume that $p=q \in [1,\infty)\setminus\{2\}$.
In Sections~3 and~4 we set the stage and
prove a version of the Banach-Lamperti theorem from \cite{Lam58IsoLp} which is applicable to the Haar measure
of an arbitrary locally compact group which is not necessarily $\sigma$-finite; see
\autoref{prp:Lamperti:MainResult}.
(An additional reason to do this is the fact,
pointed out in \cite{CamFauGar79IsomLpCopiesLpShift} and \cite{GioPes07ExtrAmenGps}, that Lamperti's
characterization of the invertible isometries, and its proof, are not entirely correct.) This description is then applied
to the invertible isometries in $\CV_p(G)$, for $p\neq 2$, obtaining, for each of them, a
Boolean automorphism of the $\sigma$-algebra of $G$
which commutes with right translations. The next step requires overcoming the first major difficulty
in this work,
namely proving that such a transformation can be lifted to a measurable map $G\to G$, and that this
map is continuous. This relies on a new lattice-theoretic characterization (see \autoref{prp:UCVp:charOpen})
of those measurable subsets of $G$ which are open modulo null-sets. The next step is proving that
the resulting continuous map $G\to G$ is given by a scalar multiple of the left translation by a group element (see
\autoref{prp:UCVp:identifyUCVp}). It should be pointed out that these results become relatively
easy to prove when $G$ is discrete, and we provide an argument in this case for the convenience of
the reader (see \autoref{lma:DiscreteCase}). The result is thus an \emph{algebraic} identification
between the invertible isometries of $\CV_p(G)$ and $\mathbb{T}\times G$.
The next significant obstacle in our work is recovering the topology of $G$ intrinsically from the
norm topology of $\CV_p(G)$. The first result in this direction is as follows (see \autoref{ntn:UCVp:pi0}).

\begin{thmintro}\label{thmintro:IsoWkTop} (\autoref{prp:UCVp:TopUCVp})
Let $G$ be a locally compact group, and let $p\in (1,\infty)$ with $p\neq 2$.
Then there are natural isomorphisms of topological groups
\[
\TT\times G\cong \Unitary(\CV_p(G))_{w^*},\andSep  G\cong \pi_0(\Unitary(\CV_p(G)))_{w^*}.
\]
\end{thmintro}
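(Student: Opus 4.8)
The strategy is to build on the algebraic identification already obtained and promote it to a statement about topologies. By \autoref{prp:UCVp:identifyUCVp} (this is the step where $p\neq 2$ is used), there is a group isomorphism
\[
\Phi\colon \TT\times G\longrightarrow \Unitary(\CV_p(G)),\qquad \Phi(\lambda,s)=\lambda\,\lambda_p(s),
\]
where $\lambda_p(s)$ is the left translation operator by $s$ on $L^p(G)$ and $\TT\times G$ carries the product of the usual topology on $\TT$ and the given topology on $G$. It therefore only remains to check that $\Phi$ is a homeomorphism once the right-hand side is given the weak${}^*$ topology; the second isomorphism will then fall out. As a first reduction I would observe that $\CV_p(G)$, being a relative bicommutant in $\Bdd(L^p(G))=(L^p(G)\tensMax L^{p'}(G))^*$, is weak${}^*$ closed, so its weak${}^*$ topology is the subspace topology; and since every $\Phi(\lambda,s)$ is an isometry, everything takes place inside the weak${}^*$ compact unit ball of $\Bdd(L^p(G))$, on which the weak${}^*$ topology agrees with the weak operator topology (WOT). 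Thus the whole matter reduces to understanding WOT-convergence of nets of isometries $\lambda_i\lambda_p(s_i)$.

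The easy half is continuity of $\Phi$. Here I would just recall that the orbit map $s\mapsto\lambda_p(s)$ is continuous from $G$ into $\Bdd(L^p(G))$ for the strong operator topology --- this is the classical continuity of translation on $L^p(G)$, obtained by approximating $L^p$-functions in norm by elements of $C_c(G)$ --- hence a fortiori for WOT; combined with continuity of scalar multiplication and the uniform bound $\|\lambda_i\lambda_p(s_i)\|=1$, this gives that $(\lambda_i,s_i)\to(\lambda,s)$ implies $\lambda_i\lambda_p(s_i)\to\lambda\lambda_p(s)$ in WOT.

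The harder half, and the real content, is \emph{continuity of $\Phi^{-1}$}: if $\lambda_i\lambda_p(s_i)\to T:=\lambda\lambda_p(s)$ in WOT, then $(\lambda_i,s_i)\to(\lambda,s)$. Since $G$ need not be metrizable, I would argue with nets and a compactification. Using that $\TT$ and the one-point compactification $G^{+}=G\cup\{\infty\}$ are compact, it suffices --- by the characterization of convergence through subnets --- to show that any subnet of $(\lambda_i,s_i)$ converging in $\TT\times G^{+}$, say to $(\mu,s^{\ast})$, must have $s^{\ast}\in G$ and $(\mu,s^{\ast})=(\lambda,s)$. The decisive ingredient is a \emph{properness estimate}: for $\xi,\eta\in C_c(G)$,
\[
\langle\lambda_p(r)\xi,\eta\rangle=\int_G\xi(r^{-1}t)\,\eta(t)\,dt,
\]
which vanishes unless $r$ lies in the compact set $(\supp\eta)\,(\supp\xi)^{-1}$. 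Hence, if $s^{\ast}=\infty$ --- i.e. the subnet eventually leaves every compact subset of $G$ --- then along it $\langle\lambda_p(s_i)\xi,\eta\rangle$ is eventually $0$ for all $\xi,\eta\in C_c(G)$, so $\langle T\xi,\eta\rangle=0$ for all such $\xi,\eta$ and therefore $T=0$, contradicting $\|T\|=1$. Thus $s^{\ast}\in G$; then the continuity of $\Phi$ just established, applied along that subnet, gives $\lambda_i\lambda_p(s_i)\to\mu\,\lambda_p(s^{\ast})$ in WOT, so $\mu\,\lambda_p(s^{\ast})=T=\lambda\,\lambda_p(s)$, and injectivity of $\Phi$ forces $(\mu,s^{\ast})=(\lambda,s)$. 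This makes $\Phi$ an isomorphism of topological groups, with naturality in $G$ inherited from the underlying algebraic identification.

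For the second isomorphism I would note that, under $\Phi$, the central subgroup $\TT\cdot\id$ of scalar unitaries corresponds to $\TT\times\{e\}$, so that the quotient construction of \autoref{ntn:UCVp:pi0} yields $\pi_0(\Unitary(\CV_p(G)))_{w^*}\cong G$ as topological groups. I expect the main obstacle to be precisely the continuity of $\Phi^{-1}$: a priori the weak${}^*$ topology is far coarser than the norm topology, and indeed a net $\lambda_p(s_i)$ with $s_i$ escaping to infinity does typically converge weak${}^*$, namely to $0$; the properness argument is exactly what guarantees that all such phantom limits fall outside the unitary group, so that convergence inside $\Unitary(\CV_p(G))$ really does pin down convergence of $(\lambda_i,s_i)$ in $\TT\times G$ --- and, since $G$ is only assumed locally compact, this has to be carried out with nets and a compactification rather than with sequences.
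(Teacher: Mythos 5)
Your proof is correct and reaches the theorem by the same overall route as the paper --- both reduce everything to showing that $\Delta(\gamma,s)=\gamma\lambda_p(s)$ is a homeomorphism of $\TT\times G$ onto $\Unitary(\CV_p(G))$ with the weak${}^*$ topology, identified on bounded sets with an operator topology --- but your argument for the crucial direction, continuity of $\Delta^{-1}$, is genuinely different. The paper works at the SOT level (invoking the fact from \autoref{pgr:topologiesBddE} that SOT, WOT and the weak${}^*$ topology all agree on $\Isom(L^p(G))$) and gives a direct quantitative estimate: choosing $V$ with $VV^{-1}\subseteq U$ and $\mu(V)<\infty$, if $s_j\notin sU$ then $s_jV\cap sV=\emptyset$ and hence
\[
\|\gamma_j\lambda_p(s_j)\charFct_V-\gamma\lambda_p(s)\charFct_V\|_p\geq\big(\mu(s_jV)+\mu(sV)\big)^{1/p}=(2\mu(V))^{1/p},
\]
contradicting SOT convergence. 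You instead pass to subnets converging in the compact space $\TT\times G^{+}$ and exclude the point at infinity via the support estimate $\langle\lambda_p(r)\xi,\eta\rangle=0$ for $r\notin(\supp\eta)(\supp\xi)^{-1}$ with $\xi,\eta\in C_c(G)$, so that the only possible cluster point of an escaping net of translations is $0$, which is not an isometry. Your version runs entirely at the WOT level --- so it needs only the elementary coincidence of WOT and weak${}^*$ convergence on bounded sets, not the Megrelishvili result --- and it makes transparent why the weak${}^*$ topology, despite admitting the phantom limit $0$, still detects convergence in $G$; the price is the compactification machinery, where the paper gets by with a two-line disjointness-of-translates estimate. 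One point you should make explicit for the second isomorphism: $\pi_0$ in \autoref{ntn:UCVp:pi0} is the quotient by the \emph{norm}-connected component of the identity, so the claim that this component is exactly $\TT\cdot 1$ requires the norm formula \eqref{prp:UCVp:identifyUCVp:eqNorm} (distinct translations sit at norm distance $2$); it does not follow from the algebraic identification alone.
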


In other words, the topology on $G$ can be recovered using the \weakStar-topology on
$\CV_p(G)$. Since it is not known whether $\CV_p(G)$ has a unique predual, this by itself is not
enough to recover the topology of $G$ from the \emph{norm} topology on $\CV_p(G)$. 
However, it turns out that any isometric isomorphism of $\CV_p(G)$ is \emph{automatically}
\weakStar-continuous when restricted to the group $\Unitary(\CV_p(G))$ of invertible isometries.
This implies Theorem~\ref{thmintro:rigidity}
for the algebra of $p$-convolvers, while a similar argument applies to the algebra of $p$-pseudomeasures.

On the other hand, if we start from an isometric isomorphism between the algebras of $p$-pseudofunctions,
we show that the topology of $G$ can be recovered using
the strict topology in the left multiplier algebra $M_l(\PF_p(G))$ of
$\PF_p(G)$. More explicitly, we show
that there exists a canonical isomorphism of topological groups $G\cong \pi_0(\mathcal{U}(M_l(\PF_p(G))))_{\mathrm{str}}$;
see \autoref{prp:UMFp:MainResult}.

It should be pointed out that our proof is quite different from Wendel's: while he identified the left centralizers
of $L^1(G)$ with the group $G$, we focus on the invertible isometries of $\CV_p(G)$.
Moreover, Wendel's argument relies heavily on the adjoint operation on $L^1(G)$ through the notion of positivity
for homomorphisms between group algebras and Kawada's description %\cite{Kaw_group_1948}
of such maps.
Finally, the greatest difficulties in our work, which come from the non-triviality of the
topology on $G$, do not appear at all when $p=1$, since
Kawada's proof reduces to the discrete case by passing to the measure algebra $M(G)$, where
the elements of $L^1(G)$ have the $\ell^1$-norm.

We would also like to point out that the intermediate results obtained in this work are bound to
have applications in problems not related to isomorphisms of convolution algebras, particularly
\autoref{prp:UCVp:charOpen} and Theorem~\ref{thmintro:IsoWkTop}. In the last section, we give
a nontrivial
application of the latter. There, we verify the reflexivity conjecture of
Gal\'e-Ransford-White \cite{GalRanWhi_weakly_1992} for $p$-convolution algebras:

\begin{thmintro}\label{thmintro:Reflexive} (\autoref{prp:ReflMain})
Let $G$ be a locally compact group, and let $p\in (1,\infty)$. If any Banach algebra
containing $\PF_p(G)$ and contained in $\CV_p(G)$ is reflexive and amenable, then $G$ is finite.
\end{thmintro}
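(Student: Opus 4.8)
The plan is to reduce, using reflexivity, to the case of a discrete group, then exploit the description of the invertible isometries obtained in Theorem~\ref{thmintro:IsoWkTop} to identify the ``unitary group'' of the ambient algebra, and finally to play amenability off against reflexivity to force $G$ to be finite. First I would dispose of $p=2$, which is classical: then $\PF_2(G)=C^*_\lambda(G)$ is a $C^*$-algebra sitting as a closed subalgebra of any Banach algebra $A$ with $C^*_\lambda(G)\subseteq A\subseteq \CV_2(G)=L(G)$, a closed subspace of a reflexive space is reflexive, and a reflexive $C^*$-algebra is finite dimensional; hence $C^*_\lambda(G)$, and therefore $G$, is finite. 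So assume $p\neq 2$, and let $A$ be reflexive and amenable with $\PF_p(G)\subseteq A\subseteq \CV_p(G)$.

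\emph{Step 1: $G$ is discrete.} Since $\PF_p(G)$ is a closed subalgebra of the reflexive space $A$, it is reflexive, and it always carries a bounded two-sided approximate identity, namely the image under $\lambda_p$ of one in $L^1(G)$. A reflexive Banach algebra with a bounded approximate identity is unital: by weak compactness of bounded sets the approximate identity has a weak cluster point, and one checks this point is a two-sided identity using that left and right multiplication operators are weak--weak continuous. Thus $\PF_p(G)$ is unital; its unit must act as $\id_{L^p(G)}$ by nondegeneracy of $\lambda_p$, so $\id_{L^p(G)}\in\PF_p(G)$, which forces $G$ to be discrete.

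\emph{Step 2: the invertible isometries of $A$.} With $G$ discrete we have $\lambda_p(s)\in\PF_p(G)\subseteq A$ for all $s\in G$, and by \autoref{prp:UCVp:identifyUCVp} the invertible isometries of $\CV_p(G)$ are exactly $\{z\lambda_p(s):z\in\TT,\ s\in G\}$; since these all lie in $A$, it follows that $\Unitary(A)=\TT\times G$ as a group. As $A$ is reflexive, its weak and weak${}^*$ topologies agree, and since the matrix coefficients $T\mapsto\langle T\xi,\eta\rangle$ restrict to bounded functionals on $A$, a short computation (taking $\xi=\delta_e$) shows that the resulting topology on $\Unitary(A)=\TT\times G$ is the product topology with $G$ discrete; in particular $G\cong\pi_0(\Unitary(A))$, and if $G$ were infinite then $(\lambda_p(s))_{s\in G}$ would be a norm-bounded net in $A$ with weak cluster point $0$.

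\emph{Step 3 (the crux): $G$ is finite.} It remains to exclude infinite $G$, and this is where amenability and reflexivity must both be used. The route I would take is: first show that amenability of $A$ forces $G$ to be amenable, by transporting a (virtual) diagonal of $A$ through the unital homomorphism $\ell^1(G)\hookrightarrow A$, in analogy with the implication ``$L(G)$ amenable $\Rightarrow$ $G$ amenable'' for $p=2$; then, for discrete amenable $G$, use that $G$ has the approximation property, so $\PM_p(G)=\CV_p(G)$ and $A$ lies between $\PF_p(G)$ and $\PM_p(G)$. One is thereby reduced to showing that $\PF_p(G)$ is not reflexive whenever $G$ is infinite and discrete. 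Since $\lambda_p\colon\ell^1(G)\to\PF_p(G)$ is injective, $\PF_p(G)$ is infinite dimensional, so one must actually exhibit an explicit non-reflexive closed subspace, imitating the fact that an infinite-dimensional $C^*$-algebra contains an isomorphic copy of $c_0$: using a F{\o}lner sequence in $G$ together with the uniform convexity and the disjoint-support structure of $\ell^p(G)$, one builds a sequence of normalized convolution operators in $\PF_p(G)$ whose closed span is isomorphic to $c_0$, contradicting reflexivity. This last non-reflexivity statement is the main obstacle: for $p=2$ it is immediate from $C^*$-theory, but for $p\neq 2$ there is no self-adjointness and one must control $\mathcal{B}(\ell^p(G))$-norms of concrete convolution operators by hand --- exactly the kind of estimate that the Banach--Lamperti description of isometries and the lattice-theoretic machinery behind \autoref{prp:UCVp:charOpen} are designed to supply.
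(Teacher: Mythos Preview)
Your Step~1 is correct and in fact cleaner than the paper's route to discreteness: the paper deduces that $G$ is discrete by invoking \autoref{prp:normWkTopReflDual} (norm and \weakStar{} topologies agree on $\Unitary(A)$ for reflexive dual Banach algebras) together with \autoref{cor:IntermediateSubalg}, whereas your ``reflexive $+$ bounded approximate identity $\Rightarrow$ unital'' argument applied directly to $\PF_p(G)$ avoids all of that machinery. Step~2 is fine.

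Step~3, however, contains two genuine gaps. First, the amenability transfer: you cannot simply ``transport a virtual diagonal through $\ell^1(G)\hookrightarrow A$'', since amenability does not pass to closed subalgebras, and the analogy with the $p=2$ implication ``$L(G)$ amenable $\Rightarrow G$ amenable'' is misleading (that is Connes' deep theorem, not a formal manipulation of diagonals). The paper establishes amenability of $G$ by a different mechanism: amenability of $A$ yields, via Phillips' multiplicative-domain work, a bounded $A^c$-bimodular projection $E\colon \Bdd(\ell^p(G))\to A^c$; composing with a trace on $A^c\cong\CV_p(G)^{\mathrm{op}}$ and the embedding $\ell^\infty(G)\hookrightarrow\Bdd(\ell^p(G))$ gives an invariant mean.

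Second, and more seriously, your proposed endgame --- embedding $c_0$ into $\PF_p(G)$ for infinite discrete amenable $G$ --- is not substantiated, and the remark that the Banach--Lamperti and lattice-theoretic tools ``are designed to supply'' the needed operator-norm estimates is off the mark: those tools describe invertible isometries, not norms of general convolution operators. The paper does something entirely different and much slicker. It \emph{dualizes}: using amenability of $G$ together with results of Runde and of the authors, the Fig\`a-Talamanca--Herz algebra $A_p(G)$ embeds isometrically into $\PF_p(G)^*$, hence is reflexive. By Leptin--Herz, $A_p(G)$ has a bounded approximate identity, so by the very same ``reflexive $+$ bai $\Rightarrow$ unital'' trick you used in Step~1, $A_p(G)$ is unital. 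But $A_p(G)\subseteq C_0(G)$, so the constant function $1$ lies in $C_0(G)$, forcing $G$ compact; combined with discreteness this gives finiteness. The key idea you are missing is to apply the reflexivity argument on the predual side rather than searching for a $c_0$-copy in $\PF_p(G)$.
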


This result strengthens and extends previously known ones,
where amenability of $G$ needed to be assumed instead.
Our proof uses a number of results and ideas from the literature. First, reflexivity is used
in conjunction with the work of Daws~\cite{Daw07DualBAlgReprInj} 
and~\autoref{prp:IsoCVp:IsoWkClosed} to show that $G$ must be discrete. Amenability of $G$
is deduced using ideas of Phillips, while results of Runde~\cite{Run_representations_2005} 
and the authors \cite{GarThi15GpAlgLp} are used to show that $G$ must be compact, and 
hence finite. 

%==========================================================================================
\subsection*{Acknowledgements}
The authors would like to thank Siegfried Echterhoff, Chris Phillips, and Nico Spronk for helpful conversations.
We also thank Sven Raum for helpful feedback on a preliminary version of this work. 

%==========================================================================================
%==========================================================================================
\section{The Radon-Nikodym theorem for valuations on measure algebras}

%==========================================================================================
In this short section, we set the stage for proving a version of the Banach-Lamperti Theorem
for localizable measure algebras (\autoref{prp:Lamperti:MainResult}) by deriving a
Radon-Nikodym Theorem in this setting (\autoref{prp:Lamperti:RNderivative_levels} and
\autoref{prp:Lamperti:RNderivative_integral}). These results are probably known to the experts, but we
have not been able to find them in the literature. For the convenience of the reader,
we give only abbreviated proofs.

We first recall basic notions from the theory of measure algebras.
For details we refer to the books of Fremlin, \cite{Fre-MsrThy3a, Fre-MsrThy3b}.

%==========================================================================================
\begin{pgr}
\label{pgr:valuation}
Let $\frakA$ be a Boolean algebra.
A \emph{valuation} on $\frakA$ is a map $\mu\colon\frakA\to[0,\infty]$ that is strict 
($\mu(0)=0$) and additive (that is, $\mu(a\vee b)=\mu(a)+\mu(b)$ whenever $a,b\in\frakA$ are orthogonal).
Note that these two conditions are equivalent to the usual definition of a valuation as 
a map that is strict, order-preserving, and modular (that is, $\mu(a\vee b)+\mu(a\wedge b)=\mu(a)+\mu(b)$ for all $a,b\in\frakA$).

%Using that $\mu$ is additive on orthogonal elements, we have $\mu(a)=\mu(a\wedge b) + \mu(a\setminus(a\wedge b))$, and $\mu(a\vee b)=\mu((a\vee b)\setminus b) + \mu(b)$.
%Additionally, we use the fact that $(a\vee b)\setminus b = a\setminus(a\wedge b)$, which implies
%\begin{align*}
%\mu(a)+\mu(b)
%=\nu([a\wedge b] +[a\setminus(a\wedge b)])+\nu(b)
%&=\mu(a\wedge b) + \mu(a\setminus(a\wedge b)) + \mu(b) \\
%&=\mu(a\wedge b) + \mu((a\vee b)\setminus b) + \mu(b)
%=\mu(a\wedge b)+\mu(a\vee b).
%\end{align*}

We call $\mu$ \emph{strictly positive} if $\mu(a)>0$ for every $a\in\frakA$ with $a\neq 0$, and 
we call $\mu$ \emph{semi-finite} if for every $a\in\frakA$ with $\mu(a)=\infty$ there exists $a'\in\frakA$ with $a'\leq a$ and $0<\mu(a')<\infty$.

%If $\frakA$ is $\sigma$-complete, then $\mu$ is called \emph{$\sigma$-additive} if $\mu(\sum_{n\in\NN}b_n)=\sum_{n\in\NN}\mu(b_n)$ for every sequence $(b_n)_{n\in\NN}$ of pairwise orthogonal elements in $\frakA$.
%Similarly, if $\frakA$ is complete, we say that $\mu$ is \emph{completely additive} if $\mu(\sum_{j\in J}b_j)=\sum_{j\in J}\mu(b_j)$ for every family $(b_j)_{j\in J}$ of pairwise orthogonal elements in $\frakA$.

%We say that $\mu$ is \emph{Scott continuous} if it is continuous when both $\frakA$ and $[0,\infty]$ are equipped with their respective Scott topologies.
%Note that $\mu$ is Scott continuous if and only if for any directed family $D\subseteq\frakA$ for which $\sup D$ exists, we have $\mu(\sup D)=\sup_{d\in D} \mu(d)$.
%We say that $\mu$ is \emph{sequentially Scott continuous} if for any increasing sequence $(a_n)_n$ in $\frakA$ for which $\sup_n a_n$ exists, we have $\mu(\sup_n a_n)=\sup_{n} \mu(a_n)$.

Recall that $\frakA$ is said to be \emph{complete} (\emph{$\sigma$-complete}) if every nonempty (countable) subset of $P$ with an upper bound has a supremum and every nonempty (countable) subset of $P$ with a lower bound has an infimum.

If $\frakA$ is complete ($\sigma$-complete), then we call $\mu$ \emph{completely additive} (\emph{$\sigma$-additive}) if $\mu(\sum_{j\in J}b_j)=\sum_{j\in J}\mu(b_j)$ for every (countable) family $(b_j)_{j\in J}$ of pairwise orthogonal elements in $\frakA$.
%We call $\mu$ \emph{completely additive} (\emph{$\sigma$-additive}) if $\mu(\sum_{j\in J}b_j)=\sum_{j\in J}\mu(b_j)$ for every (countable) family $(b_j)_{j\in J}$ of pairwise orthogonal elements in $\frakA$.

%Following the terminology in von Neumann algebras, a (strictly positive) valuation could also be called a (faithful) weight.
\end{pgr}

The following is the main notion of this section;
see \cite[Definition~321A, p.68]{Fre-MsrThy3a}.

\begin{dfn}
A \emph{measure algebra} $(\frakA,\mu)$ is a $\sigma$-complete Boolean algebra $\frakA$ together with a strictly positive, $\sigma$-additive valuation $\mu\colon\frakA\to[0,\infty]$.
A measure algebra $(\frakA,\mu)$ is said to be \emph{localizable} if $\frakA$ is complete and $\mu$ is semi-finite.
\end{dfn}

The motivating example comes from measure spaces, in the following sense.

\begin{eg}
Let $\mu=(X,\Sigma,\mu)$ be a measure space. If $\mathcal{N}$ denotes the
family of null sets, then the quotient $\Sigma/\mathcal{N}$ is a $\sigma$-complete Boolean algebra.
Further, $\mu$ induces a map $\bar{\mu}\colon\Sigma/\mathcal{N}\to[0,\infty]$ that maps the class of $E\in\Sigma$ to $\mu(E)$.
Then $(\Sigma/\mathcal{N},\bar{\mu})$ is a measure algebra, called the \emph{measure algebra} associated to $\mu$.
By \cite[Theorem~322B, p.72]{Fre-MsrThy3a}, a measure space is localizable (in the sense of \cite[Definition~211G, p.13]{Fre-MsrThy2}) if and only if its assoicated measure algebra is.
\end{eg}

%==========================================================================================
\begin{pgr}
\label{pgr:L0}
Recall that the set $\mathfrak{B}(\RR)$ of Borel subsets of $\RR$ is order-complete.
Let $\frakA$ be a $\sigma$-complete Boolean algebra.
A (real-valued) \emph{measurable function} on $\frakA$ is a sequentially order-continuous Boolean homomorphism $f\colon\mathfrak{B}(\RR)\to\frakA$;
see \cite[Proposition~364F, p.99]{Fre-MsrThy3b}\footnote{If $\hat{f}$ is a measurable function from a measurable space underlying $\mathfrak{A}$,
one obtains a measurable function $f\colon \mathfrak{B}(\RR)\to \frakA$ by setting $f(E)=\hat{f}^{-1}(E)$ for all Borel subsets $E\subseteq\RR$.}.
%One should think of $f$ as corresponding to a (imaginary) function $\hat{f}$ 
%to $\RR$ such that for a the element  is the region where $\hat{f}$ takes values in $E$.
Following Fremlin, we use $\level{f\leq t}$ to denote $f((-\infty,t])$, and analogously for $\level{f<t}$, and more generally $\level{f\in E}$ for a measurable subset $E\subseteq\RR$.
The map $f$ is determined by its values on the subsets $(-\infty,t]\subseteq\RR$ for $t\in\RR$.
It is also determined by $\level{f>t}$ for $t\in\RR$.

The space of real-valued measurable functions on $\frakA$ is denoted by $L^0_\RR(\frakA)$.
We let $L^0_\RR(\frakA)_+$ denote the subset of positive functions.
The real vector space $L^0_\RR(\frakA)$ is canonically a real, commutative algebra;
see \cite[Theorem~364D p.98]{Fre-MsrThy3b}.
%For example, the sum of $f,g\in L^0_\RR(\frakA)$ is the unique function $f+g\in L^0_\RR(\frakA)$ that satisfies
%\[
%\level{f+g > t} = \sup_{q\in\QQ} \big( \level{f>q} \wedge \level{g>t-q} \big),
%\]
%for all $t\in\RR$.
%Similarly, the product of $f,g\in L^0_\RR(\frakA)_+$ is the unique function $fg\in L^0_\RR(\frakA)_+$ that satisfies
%\[
%\level{fg > t} = \sup_{q\in\QQ_{>0}} \left( \level{f>q} \wedge \level{g>\tfrac{t}{q}} \right),
%\]
%for all $t\in\RR_+$.
%Using that every function in $L^0_\RR(\frakA)$ can be written as the difference of two functions in $L^0_\RR(\frakA)_+$, we can extend the multiplication to  $L^0_\RR(\frakA)$.

We let $L^0(\frakA)$ be the vector space of complex-valued measurable functions on $\frakA$, which is formally defined as
\[
L^0(\frakA) = \big\{ f+ ig\colon f,g\in L^0_\RR(\frakA) \big\}.
\]
We denote the real and imaginary parts of $f\in L^0(\frakA)$ by $\re(f)$ and $\im(f)$, respectively.
The complex vector space $L^0(\frakA)$ is canonically a complex, commutative algebra.
%For instance, given $f,g\in L^0(\frakA)$, their product is defined by
%\[
%fg = \big( \re(f)\re(g) - \im(f)\im(g) \big) + i \big( \re(f)\im(g) + \im(f)\re(g) \big).
%\]
Given $f\in L^0(\frakA)$, we consider $\re(f)^2+\im(f)^2$ as an element in $L^0_\RR(\frakA)$ and we define the absolute value of $f$ as the unique $|f|\in L^0_\RR(\frakA)_+$ that satisfies
\[
\level{|f|\leq t}
= \level{ \re(f)^2 + \im(f)^2 \leq t^2 },
\]
for all $t\in\RR_+$.
Similarly, for $p\in(1,\infty)$ there is a unique $|f|^p\in L^0_\RR(\frakA)_+$ satisfying
\[
\level{|f|^p\leq t}
= \level{ |f| \leq t^{1/p}},
\]
for every $t\in\RR_+$.
\end{pgr}

%==========================================================================================
Next, we define the $L^p$-norms of a function on a measure algebra.

\begin{pgr}
\label{pgr:Lp}
Let $(\mathfrak{A},\mu)$ be a measure algebra, and let $f\in L^0(\frakA)$.
The function $[0,\infty)\to[0,\infty]$ given by $t\mapsto \mu(\level{f>t})$ is
decreasing and therefore Lebesgue-measurable, which allows us to define the $L^1$-norm
$\|f\|_1\in [0,\infty]$ of $f$ by
\[
\|f\|_1 = \int_0^\infty \mu(\level{|f|>t})\, dt.
\]
As usual, the space of intregrable functions on $\frakA$ is defined as
\[
L^1(\frakA,\mu)
= \big\{ f \in L^0(\frakA)\colon \|f\|_1 < \infty \big\},
\]
and it is a Banach space for the norm $\|\cdot\|_1$.
We will usually abbreviate $L^1(\frakA,\mu)$ to $L^1(\mu)$.
Given $f\in L^0_\RR(\frakA)_+$ with $\|f\|_1<\infty$, the integral of $f$ is defined as
\[
\int f d\mu = \int_0^\infty \mu(\level{f>t}) dt.
\]
This is extended to $L^1(\mu)$ linearly in the obvious way.
%by setting
%\[
%\int f = \left( \int \re(f)_+ - \int \re(f)_- \right) + i \left( \int \im(f)_+ - \int \im(f)_- \right),
%\]
%for $f\in L^1(\mu)$.
Given $p\in(1,\infty)$, one defines
\[
L^p(\frakA,\mu)
= \big\{ f\in L^0(\frakA)\colon  |f|^p \in L^1(\frakA,\mu) \big\},
\]
which also is a Banach space for the norm  $\|f\|_p = \| |f|^p \|_1^{1/p}$.
We usually write $L^p(\mu)$ for $L^p(\frakA,\mu)$.
For $f\in L^0(\frakA)$, we set $\|f\|_\infty = \inf\{t\geq 0\colon  \level{|f|\leq t}= 1 \}$, and
\[
L^\infty(\frakA)= \big\{ f\in L^0(\frakA) \colon  \|f\|_\infty<\infty \big\}.
\]
\end{pgr}

%==========================================================================================
The next two results on Radon-Nikodym derivatives for valuations on complete Boolean algbras that will be used in \autoref{sec:Lamperti}.

%==========================================================================================
\begin{lma}
\label{prp:Lamperti:RNderivative_levels}
Let $\frakA$ be a complete Boolean algebra, and let $\sigma,\mu\colon\frakA\to[0,\infty]$
be strictly positive, semi-finite, completely additive valuations.
For $t\in(0,\infty)$, the set
\[
D_t =\big\{ a\in\frakA \colon \sigma(a')\leq t\mu(a') \mbox{ whenever } a'\leq a\big\}
\]
is downward hereditary and contains a largest element $e_t$ given by $e_t=\sup D_t$.
Then there is a unique element $\tfrac{d\sigma}{d\mu}\in L_{\mathbb{R}}^0(\frakA)_+$, which we call the Radon-Nikodym derivative of $\sigma$ with respect to $\mu$, such that $\level{\tfrac{d\sigma}{d\mu}\leq t}=e_t$ for $t\in(0,\infty)$.
Further, $\tfrac{d\sigma}{d\mu}$ is strictly positive, that is, $\level{\tfrac{d\sigma}{d\mu}>0}=1$.
Finally, we have
\begin{equation}
\label{prp:Lamperti:RNderivative_levels:eq}
s\mu(a) \leq \sigma(a) \leq t\mu(a),
\end{equation}
for every $0< s \leq t<\infty$ and $a\leq\level{s\leq\frac{d\mu}{d\sigma}\leq t}$.
\end{lma}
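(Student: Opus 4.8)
The plan is to construct the Radon--Nikodym derivative level by level from the family $(e_t)_{t>0}$ and then verify the stated inequality. First I would check that each $D_t$ is downward hereditary: if $a\in D_t$ and $b\le a$, then every $b'\le b$ satisfies $b'\le a$, so $\sigma(b')\le t\mu(b')$, whence $b\in D_t$. Next I would show $D_t$ is closed under arbitrary suprema. The key point is complete additivity: given a family $(a_i)\subseteq D_t$ with supremum $a=\sup_i a_i$, one reduces to the disjointified family, writing $a=\sup_j c_j$ with $(c_j)$ pairwise orthogonal and each $c_j\le a_{i(j)}\in D_t$. For $a'\le a$, set $a'_j=a'\wedge c_j$; then $(a'_j)$ is pairwise orthogonal with supremum $a'$, each $a'_j\le c_j\le a_{i(j)}$ gives $\sigma(a'_j)\le t\mu(a'_j)$, and summing over $j$ using complete additivity of both $\sigma$ and $\mu$ yields $\sigma(a')\le t\mu(a')$. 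Hence $a\in D_t$, so $e_t:=\sup D_t$ is the largest element of $D_t$. Monotonicity $e_s\le e_t$ for $s\le t$ is immediate since $D_s\subseteq D_t$.

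The second step is to recognize $(e_t)_{t>0}$ as the family $\level{g\le t}$ of a measurable function $g\in L^0_\RR(\frakA)_+$. By the Fremlin description of measurable functions recalled in \autoref{pgr:L0}, it suffices to check that $t\mapsto e_t$ is increasing (done), that $\sup_{t>0} e_t=1$, and that $e_t=\inf_{s>t} e_s$ (right-continuity), together with $\inf_{t>0}e_t=0$ so that $g$ is positive and finite-valued. For $\sup_t e_t=1$: if $a:=1\setminus\sup_t e_t$ were nonzero, semi-finiteness of $\mu$ gives $a''\le a$ with $0<\mu(a'')<\infty$, and then by semi-finiteness and strict positivity of $\sigma$ one finds $a'\le a''$ with $0<\mu(a')<\infty$ and $\sigma(a')<\infty$; choosing $t$ with $\sigma(a')\le t\mu(a')$ and noting that within the ``mass $a'$'' one can localize further to get $a'\in D_t$ (here one iterates the semi-finiteness argument, or uses that the valuation is semi-finite so the ratio $\sigma/\mu$ is essentially bounded on a nonzero piece) gives $0\ne a'\le e_t$, contradicting $a'\le a$. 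For right-continuity, $\inf_{s>t}e_s\in D_s$ for all $s>t$ hence satisfies $\sigma(\cdot)\le s\mu(\cdot)$ for all $s>t$ on sub-elements, so $\sigma(\cdot)\le t\mu(\cdot)$, placing it in $D_t$; thus it equals $e_t$. For $\inf_t e_t=0$: if $a\le e_t$ for all $t$, then $\sigma(a')\le t\mu(a')$ for all $t>0$ and all $a'\le a$, so $\sigma(a')=0$ unless $\mu(a')=\infty$; using semi-finiteness of $\mu$ and strict positivity of $\sigma$ forces $a=0$. Uniqueness of $g=:\tfrac{d\sigma}{d\mu}$ follows since a measurable function is determined by its level sets $\level{g\le t}$, and strict positivity is exactly $\level{g>0}=\sup_t\bigl(1\setminus e_t\bigr)^{\perp}$... more directly $\level{g>0}=1$ because $\inf_t e_t = 0$ means $\level{g\le 0}=\inf_{t>0}\level{g\le t}=0$.

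The final step is the two-sided inequality \eqref{prp:Lamperti:RNderivative_levels:eq}. For the upper bound: if $a\le \level{\tfrac{d\sigma}{d\mu}\le t}=e_t$ then $a\in D_t$, so in particular $\sigma(a)\le t\mu(a)$. For the lower bound, I would run the symmetric argument with the roles of $\sigma$ and $\mu$ reversed: by the same construction applied to the pair $(\mu,\sigma)$ one gets that on $\level{\tfrac{d\mu}{d\sigma}\le 1/s}$ one has $\mu(a)\le \tfrac1s\sigma(a)$, i.e. $s\mu(a)\le\sigma(a)$; and the standard fact that $\tfrac{d\mu}{d\sigma}=\bigl(\tfrac{d\sigma}{d\mu}\bigr)^{-1}$ as measurable functions (both being strictly positive) translates the condition $a\le\level{s\le\tfrac{d\mu}{d\sigma}\le t}$ appearing in the statement into the membership condition needed. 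Combining the two gives $s\mu(a)\le\sigma(a)\le t\mu(a)$ on the indicated element. I expect the main obstacle to be the proof that $\sup_t e_t = 1$, i.e. that the derivative is everywhere finite: this is where semi-finiteness is genuinely used, and one must carefully iterate the ``find a sub-element of finite, nonzero measure on which the ratio is bounded'' step, since a single application of semi-finiteness of $\mu$ need not control $\sigma$. Everything else is bookkeeping with complete additivity and the Fremlin dictionary between level-set families and elements of $L^0_\RR(\frakA)_+$.
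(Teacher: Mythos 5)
Your overall architecture matches the paper's: build the level sets $e_t=\sup D_t$, check they are increasing, right-continuous, with infimum $0$ and supremum $1$, read off an element of $L^0_\RR(\frakA)_+$, and then extract the two-sided inequality. The bookkeeping steps you describe (downward heredity, closure of $D_t$ under suprema via disjointification and complete additivity, right-continuity, $\bigwedge_t e_t=0$) are all correct. But the central technical step is missing, and you have in effect flagged it yourself without supplying it: you need to know that every nonzero $a$ with $\sigma(a)<t\mu(a)$ (or, in the $\sup_t e_t=1$ step, every nonzero $a$ of finite, nonzero $\mu$- and $\sigma$-measure, for $t$ large enough) dominates a \emph{nonzero element of $D_t$}. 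Having $\sigma(a')\le t\mu(a')$ for the single element $a'$ says nothing about its sub-elements, and neither ``iterating semi-finiteness'' nor the assertion that ``the ratio is essentially bounded on a nonzero piece'' produces membership in $D_t$ — the latter \emph{is} the claim to be proved. The correct argument is an exhaustion: choose a maximal pairwise-orthogonal family $(b_j)$ of nonzero sub-elements of $a$ with $\sigma(b_j)\ge t\mu(b_j)$; complete additivity shows their supremum $b$ cannot be all of $a$ (else $\sigma(a)\ge t\mu(a)$), and maximality forces the nonzero residual $a\setminus b$ into $D_t$. This exhaustion lemma, together with its contrapositive ($a\le 1\setminus e_t$ implies $\sigma(a)\ge t\mu(a)$), is the engine of the whole proof, and your proposal never establishes it.

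The same omission resurfaces in your treatment of the lower bound $s\mu(a)\le\sigma(a)$. You route it through the ``standard fact'' $\tfrac{d\mu}{d\sigma}=\bigl(\tfrac{d\sigma}{d\mu}\bigr)^{-1}$, but verifying that the level sets of the construction applied to $(\mu,\sigma)$ are the complementary level sets of $\tfrac{d\sigma}{d\mu}$ requires exactly the contrapositive statement above; moreover, in this paper that reciprocal identity (\autoref{prp:Lamperti:RNderivative_formulas}) is \emph{derived from} the inequality \eqref{prp:Lamperti:RNderivative_levels:eq}, so invoking it here is circular unless you prove it independently. The paper's route is more direct: for $a\le\level{s\le\tfrac{d\sigma}{d\mu}}$ and any $s'<s$ one has $a\le 1\setminus e_{s'}$, so the exhaustion lemma gives $\sigma(a)\ge s'\mu(a)$, and letting $s'\uparrow s$ finishes. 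Supply the exhaustion argument and both of your problematic steps close up; without it the proof is incomplete at its crux.
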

\begin{proof}
Let $t\in(0,\infty)$.
It is straightforward to verify that $D_t$ is downward hereditary.
Set $e_t=\sup D_t$.
To verify that $e_t$ belongs to $D_t$, let $a\in\frakA$ satisfy $a\leq e_t$.
If $D_t=\{0\}$, then there is nothing to show.
Otherwise, choose a maximal family $(a_j)_{j\in J}$ of nonzero, pairwise orthogonal elements in $D_t$.
Then $e_t=\sum_{j\in J} a_j$.
(If $e_t>\sum_{j\in J} a_j$, then we can choose $x\in D_t$ with $x\nleq\sum_{j\in J}a_j$ and then $x\setminus(\sum_{j\in J}a_j)$ is nonzero, belongs to $L_t$ and is orthogonal to each $a_j$, contradicting the maximality of $(a_j)_j$.)
By distributivity in $\frakA$ (\cite[Lemma~1.33, p.22]{HdbkBooleanAlg1}), we have $a=a\wedge (\sum_{j\in J} a_j)=\sum_{j\in J} (a\wedge a_j)$.
Using at the first step that $\sigma$ is completely additive, we deduce
\[
\sigma(a)
= \sum_{j\in J} \sigma(a\wedge a_j)
\leq \sum_{j\in J} t \mu(a\wedge a_j)
\leq t \mu(\sum_{j\in J} (a\wedge a_j))
= \mu(a).
\]
Thus, $e_t$ belongs to $D_t$.

\textbf{Claim~1}:\emph{ %We first prove the following statement:
Let $a\in\frakA$ and $t\in(0,\infty)$ satisfy $\sigma(a)<t\mu(a)$.
Then there exists $a'\leq e_t$ with $0\neq a'\leq a$.}
To prove the claim, choose a maximal family $(b_j)_{j\in J}$ of nonzero, pairwise orthogonal elements satisfying $b_j\leq a$ and $\sigma(b_j)\geq t\mu(b_j)$.
Set $b=\sum_{j\in J}b_j$, with the convention that $b=0$ if $J=\emptyset$.
We have $b\leq a$.
If $b=a$, then $\sigma(a)=\sum_{j\in J}\sigma(b_j)\geq\sum_{j\in J} t\mu(b_j)=t\mu(a)$, which is a contradiction.
Set $a'=a\setminus b$, which is nonzero.
To show that $a'$ belongs to $D_t$, let $a''\in\frakA$ satisfy $0\neq a''\leq a'$.
If $\sigma(a'')\nleq t\mu(a'')$, then this contradicts the maximality of the family $(b_j)_{j\in J}$.
Hence, we have $\sigma(a'')\leq t\mu(a'')$, which shows that $a'\in D_t$.

\textbf{Claim~2:}
\emph{Let $a\in\frakA$ and $t\in(0,\infty)$ satisfy $a\leq 1\setminus e_t$.
Then $\sigma(a)\geq t\mu(a)$.}
To prove the claim, assume that $\sigma(a)<t\mu(a)$.
By Claim~1, we can find $a'\leq e_t$ with $0\neq a'\leq a$, which is a contradiction.

Set $e_0=\bigwedge_{t>0}e_t$ and $e_t=0$ for $t<0$.
For every $t\in(0,\infty)$, we have $e_0\leq e_t$ and therefore $\sigma(e_0)\leq t\mu(e_0)$.
We deduce that $\sigma(e_0)=0$ or $\mu(e_0)=\infty$.
In the first case, we get $e_0=0$ since $\sigma$ is strictly positive.
In the second case, we use that $\mu$ is semi-finite to obtain $a\leq e_0$ with $0<\mu(a)<\infty$.
Since $a\leq e_t$, we again obtain $\sigma(a)\leq t\mu(a)$ for every $t\in(0,\infty)$, whence $\sigma(a)=0$ and thus $a=0$, a contradiction.
Altogether, we have $e_0=0$.
Similarly one shows that $\bigvee_{t\in\RR}e_t=1$.

It is now straightforward to verify that the map $\RR\to\frakA$ given by $t\mapsto e_t$, is order-preserving and satisfies $e_t=\bigwedge_{s>t}e_{s}$ for all $t\in\RR$.
We also have $\bigwedge_{t\in\RR}e_t=0$ and $\bigvee_{t\in\RR}e_t=1$.
Thus, we obtain a unique element $\tfrac{d\sigma}{d\mu}\in L_{\mathbb{R}}^0(\frakA)_+$ such that $\level{\tfrac{d\sigma}{d\mu}\leq t}=e_t$ for $t\in(0,\infty)$.
We have $\level{\tfrac{d\sigma}{d\mu}>0}=1\setminus e_0=1$.

To verify \eqref{prp:Lamperti:RNderivative_levels:eq}, let $0< s\leq t< \infty$ and $a\leq\level{s\leq\frac{d\mu}{d\sigma}\leq t}$.
Then $a\leq\level{\tfrac{d\mu}{d\sigma}\leq t}=e_t$, which implies that $\sigma(a)\leq t\mu(a)$.
On the other hand, for every $s'<s$, we have
\[
a \leq \level{s\leq \tfrac{d\mu}{d\sigma}\leq t} \leq \level{s'<\tfrac{d\mu}{d\sigma}} = 1\setminus e_{s'}.
\]
By Claim~2, we deduce $s'\mu(a)\leq \sigma(a)$.
Since this holds for every $s'<s$, we conclude that $s\mu(a)\leq\sigma(a)$, as desired.
\end{proof}

%==========================================================================================

The following is our desired Radon-Nikodym Theorem for localizable measure algebras.

\begin{thm}
\label{prp:Lamperti:RNderivative_integral}
Let $\frakA$ be a complete Boolean algebra, and let $\sigma,\mu\colon\frakA\to[0,\infty]$ be strictly positive, semi-finite, completely additive valuations.
Let $f\in L^1(\frakA,\sigma)$.
Then $f\tfrac{d\sigma}{d\mu}$ belongs to $L^1(\frakA,\mu)$ and we have
\begin{align}
\label{prp:Lamperti:RNderivative_integral:eq}
\int f\, d\sigma = \int f\frac{d\sigma}{d\mu}\, d\mu.
\end{align}
\end{thm}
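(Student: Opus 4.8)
The plan is to prove the formula \eqref{prp:Lamperti:RNderivative_integral:eq} first for indicator-type functions and then to bootstrap to general $f\in L^1(\frakA,\sigma)$ via the level-set description of the integral given in \autoref{pgr:Lp}. By linearity and by splitting into real and imaginary parts, and then into positive and negative parts, it suffices to treat $f\in L^0_\RR(\frakA)_+$ with $\|f\|_1<\infty$, where $\|\cdot\|_1$ is computed with respect to $\sigma$. Write $g=\tfrac{d\sigma}{d\mu}$, which by \autoref{prp:Lamperti:RNderivative_levels} is a strictly positive element of $L^0_\RR(\frakA)_+$.

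The key step is the special case $f=\charFct_a$ for $a\in\frakA$ with $\sigma(a)<\infty$; here $\charFct_a$ denotes the characteristic function of $a$, i.e.\ the measurable function with $\level{\charFct_a>t}=a$ for $t\in[0,1)$ and $=0$ for $t\geq 1$. For such $a$ the claim becomes
\[
\sigma(a)=\int \charFct_a\, g\, d\mu = \int_0^\infty \mu\big(\level{\charFct_a g>t}\big)\, dt = \int_0^\infty \mu\big(a\wedge\level{g>t}\big)\, dt.
\]
To establish this I would partition $a$ according to the size of $g$: fix $\varepsilon>1$ and set $a_n=a\wedge\level{\varepsilon^{n}\leq g\leq\varepsilon^{n+1}}$ for $n\in\ZZ$. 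Since $g$ is strictly positive and $\level{g\leq t}\downarrow 0$ as $t\downarrow 0$ while $\level{g\leq t}\uparrow 1$ as $t\uparrow\infty$, the $a_n$ are pairwise orthogonal with $\sum_{n}a_n=a$. Inequality \eqref{prp:Lamperti:RNderivative_levels:eq} applied on each $a_n$ gives $\varepsilon^{n}\mu(a_n)\leq\sigma(a_n)\leq\varepsilon^{n+1}\mu(a_n)$, and complete additivity of $\sigma$ gives $\sigma(a)=\sum_n\sigma(a_n)$. A parallel elementary estimate on the Lebesgue integral $\int_0^\infty\mu(a\wedge\level{g>t})\,dt$, cutting the $t$-axis at the points $\varepsilon^n$ and using that on $a_n$ one has $\level{g>t}\wedge a_n=a_n$ for $t<\varepsilon^n$ and $=0$ for $t\geq\varepsilon^{n+1}$, sandwiches that integral between $\sum_n\varepsilon^{n}\mu(a_n)$ and $\sum_n\varepsilon^{n+1}\mu(a_n)$ as well. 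Both quantities therefore lie in the same interval, which has ratio $\varepsilon$ between its endpoints; letting $\varepsilon\downarrow 1$ yields equality. (Here one first restricts to $a$ with $\mu(a)<\infty$ and $g$ bounded and bounded away from $0$ on $a$, where all sums are visibly finite, and then removes these restrictions by monotone convergence, using semi-finiteness of $\mu$ and the already-established finiteness $\int\charFct_a g\,d\mu=\sigma(a)<\infty$; this last point also records that $\charFct_a g\in L^1(\mu)$.)

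From the case of characteristic functions one passes to simple functions by linearity, and then to general $f\in L^0_\RR(\frakA)_+$ with $\int f\,d\sigma<\infty$ by approximating $f$ from below by an increasing sequence of simple functions $f_k\uparrow f$ (for instance the standard dyadic truncations $f_k=\sum_{j=1}^{k2^k}2^{-k}\charFct_{\level{f>j2^{-k}}}$, each level set having finite $\sigma$-measure since $\|f\|_1<\infty$). Then $f_k g\uparrow fg$ in $L^0_\RR(\frakA)_+$, and the monotone convergence theorem for the integral on a measure algebra (which follows directly from the level-set formula in \autoref{pgr:Lp} together with the monotone convergence theorem for the Lebesgue integral on $(0,\infty)$) gives $\int f_k\,d\sigma\uparrow\int f\,d\sigma$ and $\int f_k g\,d\mu\uparrow\int fg\,d\mu$; since the two sides agree for every $k$, they agree in the limit, and in particular $\int fg\,d\mu=\int f\,d\sigma<\infty$, so $fg\in L^1(\frakA,\mu)$. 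Finally one extends to complex $f\in L^1(\frakA,\sigma)$ by writing $f=(f_1-f_2)+i(f_3-f_4)$ with $f_j\in L^0_\RR(\frakA)_+\cap L^1(\frakA,\sigma)$ and using linearity of both integrals and of multiplication by $g$.

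The main obstacle is the characteristic-function case, and more precisely the bookkeeping needed to make the $\varepsilon$-partition argument rigorous in the presence of possibly infinite measures: one must arrange the reductions (first $\mu(a)<\infty$ and $g$ sandwiched between two positive constants on $a$, then general $a$) so that all the infinite sums and improper integrals involved are legitimately handled by complete additivity of $\sigma$ and by monotone convergence, rather than by any manipulation that implicitly assumes $\sigma$-finiteness. Everything after that is a routine approximation argument. Since the paper announces abbreviated proofs in this section, I would present the characteristic-function computation in reasonable detail and then indicate the simple-function and monotone-convergence steps briefly.
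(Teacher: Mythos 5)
Your proposal is correct and follows essentially the same route as the paper: both reduce to the case of a characteristic function $\charFct_a$ with $\sigma(a)<\infty$, partition $a$ by level sets of $\tfrac{d\sigma}{d\mu}$, apply the two-sided estimate \eqref{prp:Lamperti:RNderivative_levels:eq} on each piece, and then extend by linearity to simple functions and by approximation to all of $L^1(\frakA,\sigma)$. The only cosmetic difference is that you use a geometric partition at the points $\varepsilon^n$ and a single two-sided sandwich with $\varepsilon\downarrow 1$ (take the level sets half-open so the pieces are orthogonal), whereas the paper cuts at $n/2^k$ and establishes the two inequalities separately via monotone and dominated convergence; your variant is, if anything, slightly cleaner.
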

\begin{proof}
We first prove the result for characteristic functions.
Let $a\in\frakA$ with $\sigma(a)<\infty$.
Using the monotone convergence theorem (see for example \cite[365C, p.116]{Fre-MsrThy3b}) at the first step, and using \eqref{prp:Lamperti:RNderivative_levels:eq} at the second step, we obtain
\begin{align*}
\int \charFct_a \frac{d\sigma}{d\mu}\, d\mu
%= \int_0^\infty \sigma(a\cap\llevel{\frac{d\mu}{d\sigma}\geq t}) dt
&=\lim_{k\to\infty} \sum_{n=1}^\infty \frac{n}{2^k} \mu\left(a\cap\llevel{\frac{n}{2^k}< \frac{d\sigma}{d\mu} \leq \frac{n+1}{2^k}}\right) \\
&\leq \lim_{k\to\infty} \sum_{n=1}^\infty \frac{n}{2^k} \frac{2^k}{n} \sigma\left(a\cap\llevel{\frac{n}{2^k}< \frac{d\mu}{d\sigma} \leq \frac{n+1}{2^k}}\right) \\
&= \lim_{k\to\infty} \sigma\left(a\cap\llevel{\frac{1}{2^k}< \frac{d\mu}{d\sigma}}\right)= \sigma(a).
\end{align*}
Let $\varepsilon>0$.
Using \eqref{prp:Lamperti:RNderivative_levels:eq} at the second step,
we get
\begin{align*}
\int \sum_{n=1}^\infty (n+1)\varepsilon \charFct_{a\cap\llevel{n\varepsilon<\frac{d\sigma}{d\mu} \leq (n+1)\varepsilon}}\, d\mu
&= \sum_{n=1}^\infty (n+1)\varepsilon \mu\left(a\cap\llevel{n\varepsilon<\frac{d\sigma}{d\mu} \leq (n+1)\varepsilon}\right) \\
&\leq  \frac{n+1}{n}\varepsilon \sum_{n=1}^\infty  \sigma\left(a\cap\llevel{n\varepsilon<\frac{d\sigma}{d\mu} \leq (n+1)\varepsilon}\right) \\
&\leq 2\sigma(a)<\infty.
\end{align*}
We may therefore apply the dominated convergence theorem at the first step, and \eqref{prp:Lamperti:RNderivative_levels:eq} at the second step to deduce that
\begin{align*}
\int \charFct_{a\cap\llevel{\varepsilon< \frac{d\sigma}{d\mu}}} \frac{d\sigma}{d\mu}\, d\mu
&=\lim_{k\to\infty} \sum_{n=2^k}^\infty \frac{n+1}{2^k}\varepsilon \mu\left(a\cap\llevel{\frac{n}{2^k}\varepsilon < \frac{d\sigma}{d\mu} \leq \frac{n+1}{2^k}\varepsilon}\right) \\
&\geq \lim_{k\to\infty} \sum_{n=2^k}^\infty \frac{n+1}{2^k}\varepsilon \frac{2^k}{(n+1)\varepsilon} \sigma\left(a\cap\llevel{\frac{n}{2^k}\varepsilon< \frac{d\mu}{d\sigma} \leq \frac{n+1}{2^k}\varepsilon}\right) \\
&= \sigma\left(a\cap\llevel{\varepsilon < \frac{d\mu}{d\sigma}}\right).
\end{align*}
It follows that
\[
\int \charFct_a \frac{d\sigma}{d\mu}\, d\mu
= \lim_{\varepsilon\to 0} \int \charFct_{a\cap\llevel{\varepsilon< \frac{d\sigma}{d\mu}}} \frac{d\sigma}{d\mu}\, d\mu
\geq \lim_{\varepsilon\to 0} \sigma\left(a\cap\llevel{\varepsilon < \frac{d\mu}{d\sigma}}\right)
= \sigma(a).
\]
Together we have $\int \charFct_a \frac{d\sigma}{d\mu}\, d\mu = \sigma(a) = \int \charFct_a \, d\sigma$.

By linearity of the integral, we obtain $\int u \frac{d\sigma}{d\mu}\, d\mu = \int u \, d\sigma$ for every simple function $u$ in $L^1(\frakA,\sigma)$.
Since such functions are norm-dense in $L^1(\frakA,\sigma)$ (see for example \cite[Lemma~365F, p.118]{Fre-MsrThy3b}), and since the integral is norm-continuous, we obtain the same formula for every $f\in L^1(\frakA,\sigma)$.
\end{proof}

Using \autoref{prp:Lamperti:RNderivative_levels}\eqref{prp:Lamperti:RNderivative_levels:eq} it is straightforward
to prove the following basic rules for Radon-Nikodym derivatives, whose proofs are left to the reader.

%==========================================================================================
\begin{lma}
\label{prp:Lamperti:RNderivative_formulas}
Let $\frakA$ be a complete Boolean algebra, and let $\sigma,\mu,\rho\colon\frakA\to[0,\infty]$ be strictly positive, semi-finite, completely additive valuations.
Then
\begin{align}
\label{prp:Lamperti:RNderivative_formulas:eqProd}
\frac{d\mu}{d\sigma} \frac{d\sigma}{d\rho}
= \frac{d\mu}{d\rho}.
\end{align}

Let $\varphi\in\Aut(\frakA)$.
Then
\begin{align}
\label{prp:Lamperti:RNderivative_formulas:eqPush}
\varphi\circ\frac{d\mu}{d\sigma} = \frac{d(\mu\circ\varphi^{-1})}{d(\sigma\circ\varphi^{-1})}.
\end{align}
\end{lma}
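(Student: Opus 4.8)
The plan is to deduce both identities from a single observation, namely that the inequality \eqref{prp:Lamperti:RNderivative_levels:eq} of \autoref{prp:Lamperti:RNderivative_levels} not merely holds for the Radon-Nikodym derivative but in fact \emph{characterizes} it. Concretely, the first step is to record the following: if $\lambda,\nu\colon\frakA\to[0,\infty]$ are strictly positive, semi-finite, completely additive valuations and $h\in L^0_\RR(\frakA)_+$ is strictly positive with $s\nu(a)\leq\lambda(a)\leq t\nu(a)$ whenever $0<s\leq t<\infty$ and $a\leq\level{s\leq h\leq t}$, then $h=\tfrac{d\lambda}{d\nu}$. To prove this I would show that $\level{h\leq t}=e_t$ for every $t>0$, with $e_t=\sup D_t$ as in \autoref{prp:Lamperti:RNderivative_levels} for the pair $(\lambda,\nu)$, whence $h=\tfrac{d\lambda}{d\nu}$ by the uniqueness clause there. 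For $\level{h\leq t}\leq e_t$ one uses strict positivity of $h$ to write $\level{h\leq t}=\sup_n\level{\tfrac1n\leq h\leq t}$, applies the hypothesis on each piece to get $\lambda(a')\leq t\nu(a')$, and invokes continuity from below of $\lambda$ (which holds since $\lambda$ is completely additive) to conclude $\level{h\leq t}\in D_t$. For the reverse inclusion, if $e_t\wedge\level{h>t}\neq 0$ one extracts, after intersecting with suitable $\level{h\leq t'}$ and $\level{h\geq s}$ with $t<s\leq t'$, a nonzero $c\leq\level{s\leq h\leq t'}$ with $c\leq e_t$; the hypothesis gives $s\nu(c)\leq\lambda(c)$ while $c\leq e_t\in D_t$ gives $\lambda(c)\leq t\nu(c)$, so $(s-t)\nu(c)\leq 0$, forcing $\nu(c)=0$ — using semi-finiteness to reduce to the case $\nu(c)<\infty$ — and hence $c=0$ by strict positivity of $\nu$, a contradiction.

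Granting this characterization, \eqref{prp:Lamperti:RNderivative_formulas:eqPush} follows almost immediately. One first checks that $\mu\circ\varphi^{-1}$ and $\sigma\circ\varphi^{-1}$ are again strictly positive, semi-finite, completely additive valuations, that $h:=\varphi\circ\tfrac{d\mu}{d\sigma}$ lies in $L^0_\RR(\frakA)_+$, and that it is strictly positive because $\level{h>0}=\varphi(\level{\tfrac{d\mu}{d\sigma}>0})=\varphi(1)=1$. Since $\varphi$ is a Boolean automorphism we have $\level{s\leq h\leq t}=\varphi(\level{s\leq\tfrac{d\mu}{d\sigma}\leq t})$, so for $a\leq\level{s\leq h\leq t}$ one applies \eqref{prp:Lamperti:RNderivative_levels:eq} to $\varphi^{-1}(a)\leq\level{s\leq\tfrac{d\mu}{d\sigma}\leq t}$ and rewrites the resulting inequality as $s(\sigma\circ\varphi^{-1})(a)\leq(\mu\circ\varphi^{-1})(a)\leq t(\sigma\circ\varphi^{-1})(a)$. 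By the characterization from the first step, $h=\tfrac{d(\mu\circ\varphi^{-1})}{d(\sigma\circ\varphi^{-1})}$, which is \eqref{prp:Lamperti:RNderivative_formulas:eqPush}.

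For \eqref{prp:Lamperti:RNderivative_formulas:eqProd} I would again verify the characterizing inequality, now for $h:=\tfrac{d\mu}{d\sigma}\cdot\tfrac{d\sigma}{d\rho}$ relative to the pair $(\mu,\rho)$; note $h$ is strictly positive as a product of strictly positive functions. Given $0<s\leq t<\infty$ and $a\leq\level{s\leq h\leq t}$, fix $\delta>1$ and partition $a$ into the pairwise orthogonal pieces $a_{m,n}=a\wedge\level{\delta^m\leq\tfrac{d\mu}{d\sigma}\leq\delta^{m+1}}\wedge\level{\delta^n\leq\tfrac{d\sigma}{d\rho}\leq\delta^{n+1}}$, $m,n\in\ZZ$, whose supremum is $a$. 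On each nonzero $a_{m,n}$, applying \eqref{prp:Lamperti:RNderivative_levels:eq} first to $\tfrac{d\mu}{d\sigma}$ and then to $\tfrac{d\sigma}{d\rho}$ gives $\delta^{m+n}\rho(b)\leq\mu(b)\leq\delta^{m+n+2}\rho(b)$ for all $b\leq a_{m,n}$, while $h$ takes values in $[\delta^{m+n},\delta^{m+n+2}]$ on the nonzero set $a_{m,n}\leq\level{s\leq h\leq t}$, which forces $\delta^{-2}s\leq\delta^{m+n}\leq t$. Hence $\delta^{-2}s\,\rho(a_{m,n})\leq\mu(a_{m,n})\leq\delta^{2}t\,\rho(a_{m,n})$; summing over $m,n$ by complete additivity yields $\delta^{-2}s\,\rho(a)\leq\mu(a)\leq\delta^{2}t\,\rho(a)$, and letting $\delta\to 1$ gives $s\rho(a)\leq\mu(a)\leq t\rho(a)$. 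The characterization then identifies $h$ with $\tfrac{d\mu}{d\rho}$. I expect this last step — the partition-and-limit argument for the product, together with the bookkeeping for pieces on which $\rho$ (equivalently $\mu$) is infinite, where the inequalities degenerate to trivialities but stay consistent — to be the main obstacle, whereas the characterization lemma and the pushforward formula are routine once the correct formulation is in place.
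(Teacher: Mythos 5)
Your proposal is correct and is precisely the route the paper intends: the lemma is stated with its proof left to the reader, to be deduced from inequality \eqref{prp:Lamperti:RNderivative_levels:eq} of \autoref{prp:Lamperti:RNderivative_levels}, and your characterization lemma (that a strictly positive $h$ satisfying $s\nu(a)\leq\lambda(a)\leq t\nu(a)$ on $a\leq\level{s\leq h\leq t}$ must equal $\tfrac{d\lambda}{d\nu}$) is exactly the right way to make that deduction rigorous, with both directions of the level-set comparison and the semi-finiteness reduction handled correctly. The only cosmetic adjustments needed are to take half-open intervals $\level{\delta^m\leq\cdot<\delta^{m+1}}$ so that the pieces $a_{m,n}$ are genuinely pairwise orthogonal, and to note that the subscript placement $\level{s\leq\tfrac{d\mu}{d\sigma}\leq t}$ in the statement of \eqref{prp:Lamperti:RNderivative_levels:eq} is a typo for $\level{s\leq\tfrac{d\sigma}{d\mu}\leq t}$, which you have implicitly corrected.
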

%==========================================================================================
%==========================================================================================
\section{The Banach-Lamperti Theorem for localizable measure algebras}
\label{sec:Lamperti}

%==========================================================================================
In \cite{Lam58IsoLp}, Lamperti gave a description of the linear isometries of the $L^p$-space of a $\sigma$-finite measure space, for $p\in[1,\infty)$ with $p\neq 2$.
Since this result had been earlier announced (without proof) by Banach for the unit interval with
the Lebesgue measure, it is usually referred to as the ``Banach-Lamperti Theorem''.
In this section, we generalize their result by characterizing the surjective, linear isometries
on the $L^p$-space of a localizable measure algebra; see \autoref{prp:Lamperti:MainResult}.
The generalization from $\sigma$-finite spaces to localizable ones is not a minor one, and will allow us in the next sections to deal with locally compact groups
that are not $\sigma$-compact.

Besides wanting to avoid working only with $\sigma$-compact groups, another reason for presenting a rigorous proof of \autoref{prp:Lamperti:MainResult} is
that Lamperti's description (and its proof) is not entirely correct, as is pointed out in \cite{CamFauGar79IsomLpCopiesLpShift} and \cite{GioPes07ExtrAmenGps}.

%==========================================================================================
\begin{ntn}
Let $\frakA$ be a Boolean algebra.
We let $\Aut(\frakA)$ denote the group of Boolean automorphisms of $\frakA$.
Further, we use $\mathcal{U}(L^\infty(\frakA))$ to denote the $\TT$-valued functions on $\frakA$, that is,
$\mathcal{U}(L^\infty(\frakA))
= \big\{ f\in L^0(\frakA) \colon  |f| \equiv 1 \big\}$.
If $(\frakA,\mu)$ is a measure algebra, and $p\in[1,\infty)$, then we use $\Isom(L^p(\mu))$ to denote the group of surjective, linear isometries of the Banach space $L^p(\mu)$.
\end{ntn}

%==========================================================================================
Note that $\mathcal{U}(L^\infty(\frakA))$ is a group under multiplication.
The following result is straightforward to verify.

%==========================================================================================
\begin{lma}
\label{prp:Lamperti:IsomLpMult}
Let $(\frakA,\mu)$ be a measure algebra, let $p\in[1,\infty)$, and let $f\in \mathcal{U}(L^\infty(\frakA))$.
Then the multiplication map $m_f\colon L^p(\mu)\to L^p(\mu)$ given by $m_f(\xi)=f\xi$ for all $\xi\in L^p(\mu)$ is a surjective, linear isometry.
Moreover, given $g\in \mathcal{U}(L^\infty(\frakA))$, we have $m_fm_g=m_{fg}$.
\end{lma}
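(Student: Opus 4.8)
The plan is to deduce the statement from two elementary facts about the commutative algebra $L^0(\frakA)$ recalled in \autoref{pgr:L0}: that the modulus is multiplicative, that is, $|gh|=|g|\,|h|$ for all $g,h\in L^0(\frakA)$, and that $\charFct_1$ is its multiplicative unit, so that $m_{\charFct_1}=\id_{L^p(\mu)}$. The first fact is standard (see \cite{Fre-MsrThy3b}) and can also be verified directly from the level-set description of the modulus; the second is immediate. Everything else is then purely formal.

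First I would check that $m_f$ is a well-defined linear isometry of $L^p(\mu)$. Linearity is clear from distributivity in $L^0(\frakA)$. Given $\xi\in L^p(\mu)$, multiplicativity of the modulus and the hypothesis $|f|\equiv 1$ give $|f\xi|=|f|\,|\xi|=|\xi|$, hence $|f\xi|^p=|\xi|^p\in L^1(\frakA,\mu)$; thus $f\xi\in L^p(\mu)$ and $\|m_f(\xi)\|_p=\| |f\xi|^p \|_1^{1/p}=\| |\xi|^p \|_1^{1/p}=\|\xi\|_p$.

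It remains to prove surjectivity and the relation $m_fm_g=m_{fg}$. The complex conjugate $\bar f$ satisfies $\re(\bar f)=\re(f)$ and $\im(\bar f)=-\im(f)$, so $|\bar f|=|f|\equiv 1$ and $\bar f\in\mathcal{U}(L^\infty(\frakA))$; moreover, from the definition of $|f|$ together with $|f|\equiv 1$ one computes $f\bar f=\re(f)^2+\im(f)^2=\charFct_1$. Hence $m_fm_{\bar f}(\xi)=(f\bar f)\xi=\xi$ for all $\xi\in L^p(\mu)$ and, symmetrically, $m_{\bar f}m_f=\id$, so $m_f$ is bijective with inverse $m_{\bar f}$ and therefore $m_f\in\Isom(L^p(\mu))$. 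Finally, $m_fm_g=m_{fg}$ follows from associativity and commutativity of $L^0(\frakA)$, since $m_f(m_g(\xi))=f(g\xi)=(fg)\xi=m_{fg}(\xi)$ for all $\xi$, and $fg\in\mathcal{U}(L^\infty(\frakA))$ because $|fg|=|f|\,|g|=\charFct_1$. The only point requiring any care --- and hence the nominal ``main obstacle'' --- is the multiplicativity of the modulus on $L^0(\frakA)$, which I would dispatch either by citing \cite{Fre-MsrThy3b} or by checking it levelwise on the Boolean homomorphisms $\mathfrak{B}(\RR)\to\frakA$ defining the functions in question.
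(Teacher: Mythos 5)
Your proof is correct; the paper itself offers no argument here, stating only that the result ``is straightforward to verify,'' and your verification (isometry via $|f\xi|=|f|\,|\xi|=|\xi|$, surjectivity via the inverse $m_{\bar f}$ using $f\bar f=\charFct_1$, and the multiplicativity $m_fm_g=m_{fg}$) is exactly the routine check the authors had in mind. The one point you flag --- multiplicativity of the modulus in $L^0(\frakA)$ --- is indeed the only nontrivial ingredient and is standard, so nothing is missing.
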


%==========================================================================================

%==========================================================================================
\begin{lma}
\label{prp:Lamperti:IsomLpu_pi}
Let $(\frakA,\mu)$ be a localizable measure algebra, let $p\in[1,\infty)$, and let $\varphi\in\Aut(\frakA)$.
Then the linear map $u_\varphi\colon L^p(\mu)\to L^p(\mu)$ given by
\begin{align}
\label{prp:Lamperti:IsomLpu_pi:eq}
u_\varphi(\xi)=(\varphi\circ\xi)\cdot\left(\tfrac{d(\mu\circ\varphi^{-1})}{d\mu}\right)^{1/p},
\end{align}
for all $\xi\in L^p(\mu)$, is a surjective isometry.
Moreover, given $\psi\in\Aut(\frakA)$, we have $u_\psi\circ u_\varphi=u_{\psi\circ\varphi}$.
\end{lma}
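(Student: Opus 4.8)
The plan is to verify the three assertions in turn: that $u_\varphi$ is a well-defined isometry, that it is surjective, and that $\varphi\mapsto u_\varphi$ is a (covariant) group homomorphism. For well-definedness, I would first note that since $\varphi\colon\frakA\to\frakA$ is a Boolean automorphism, composition $\xi\mapsto\varphi\circ\xi$ carries $L^0(\frakA)$ to itself as an algebra isomorphism (it is just precomposition of the Boolean homomorphism $\mathfrak{B}(\RR)\to\frakA$ with $\varphi$), and it commutes with $\re$, $\im$, absolute values and $p$-th powers because these are defined purely via the bracket notation $\level{\cdot}$ together with Boolean operations, which $\varphi$ preserves. The Radon–Nikodym derivative $\frac{d(\mu\circ\varphi^{-1})}{d\mu}$ lies in $L^0_\RR(\frakA)_+$ and is strictly positive by \autoref{prp:Lamperti:RNderivative_levels} (applied with $\sigma=\mu\circ\varphi^{-1}$, which is again a strictly positive, semi-finite, completely additive valuation since $\varphi$ is an automorphism), so its $1/p$-th power is a well-defined positive element of $L^0_\RR(\frakA)$ and the product in \eqref{prp:Lamperti:IsomLpu_pi:eq} makes sense in $L^0(\frakA)$.

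The isometry statement is the main computational point. Given $\xi\in L^p(\mu)$, I would compute
\[
|u_\varphi(\xi)|^p = |\varphi\circ\xi|^p\cdot\frac{d(\mu\circ\varphi^{-1})}{d\mu}
= (\varphi\circ|\xi|^p)\cdot\frac{d(\mu\circ\varphi^{-1})}{d\mu},
\]
using that $\varphi$ commutes with $|\cdot|^p$. Writing $\sigma=\mu\circ\varphi^{-1}$, \autoref{prp:Lamperti:RNderivative_formulas}\eqref{prp:Lamperti:RNderivative_formulas:eqPush} (or rather the observation that $\varphi\circ|\xi|^p$ is obtained by applying $\varphi$ to the levels of $|\xi|^p$) lets me identify $\int (\varphi\circ|\xi|^p)\,d\sigma$ with $\int|\xi|^p\,d\mu$: indeed $\mu(\level{(\varphi\circ|\xi|^p)>t}) = \mu(\varphi\level{|\xi|^p>t}\cap\ldots)$, and more directly $\sigma(\varphi(a))=\mu\circ\varphi^{-1}(\varphi(a))=\mu(a)$, so the distribution function of $\varphi\circ|\xi|^p$ with respect to $\sigma$ equals that of $|\xi|^p$ with respect to $\mu$. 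Then \autoref{prp:Lamperti:RNderivative_integral} applied to $f=\varphi\circ|\xi|^p\in L^1(\frakA,\sigma)$ gives
\[
\|u_\varphi(\xi)\|_p^p = \int (\varphi\circ|\xi|^p)\,\frac{d\sigma}{d\mu}\,d\mu = \int (\varphi\circ|\xi|^p)\,d\sigma = \int |\xi|^p\,d\mu = \|\xi\|_p^p,
\]
which shows $u_\varphi$ maps $L^p(\mu)$ into $L^p(\mu)$ isometrically. I expect the bookkeeping in identifying $\int(\varphi\circ g)\,d(\mu\circ\varphi^{-1})$ with $\int g\,d\mu$ — essentially a change-of-variables for the abstract integral of \autoref{pgr:Lp} — to be the only place needing care, but it follows directly from the definition of the integral via distribution functions and the identity $(\mu\circ\varphi^{-1})\circ\varphi=\mu$.

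For surjectivity and the homomorphism property I would establish the composition formula $u_\psi\circ u_\varphi = u_{\psi\circ\varphi}$ first; surjectivity then follows since $u_\varphi\circ u_{\varphi^{-1}} = u_{\id} = \id$ (the derivative $\frac{d\mu}{d\mu}$ being $1$). To prove the composition formula, apply $u_\psi$ to $u_\varphi(\xi)$: using that $\psi$ is an algebra homomorphism on $L^0(\frakA)$,
\[
u_\psi(u_\varphi(\xi)) = \bigl(\psi\circ(\varphi\circ\xi)\bigr)\cdot\Bigl(\psi\circ\tfrac{d(\mu\circ\varphi^{-1})}{d\mu}\Bigr)^{1/p}\cdot\Bigl(\tfrac{d(\mu\circ\psi^{-1})}{d\mu}\Bigr)^{1/p},
\]
and $\psi\circ(\varphi\circ\xi) = (\psi\circ\varphi)\circ\xi$. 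It remains to check that the product of the two derivative factors equals $\frac{d(\mu\circ(\psi\circ\varphi)^{-1})}{d\mu}$. By \autoref{prp:Lamperti:RNderivative_formulas}\eqref{prp:Lamperti:RNderivative_formulas:eqPush}, $\psi\circ\frac{d(\mu\circ\varphi^{-1})}{d\mu} = \frac{d(\mu\circ\varphi^{-1}\circ\psi^{-1})}{d(\mu\circ\psi^{-1})} = \frac{d(\mu\circ(\psi\circ\varphi)^{-1})}{d(\mu\circ\psi^{-1})}$, so the product of the two factors is, by the chain rule \autoref{prp:Lamperti:RNderivative_formulas}\eqref{prp:Lamperti:RNderivative_formulas:eqProd},
\[
\frac{d(\mu\circ(\psi\circ\varphi)^{-1})}{d(\mu\circ\psi^{-1})}\cdot\frac{d(\mu\circ\psi^{-1})}{d\mu} = \frac{d(\mu\circ(\psi\circ\varphi)^{-1})}{d\mu},
\]
and raising to the $1/p$-th power (which respects products of positive elements of $L^0_\RR(\frakA)$) gives exactly the derivative factor appearing in $u_{\psi\circ\varphi}(\xi)$. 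This completes the proof.
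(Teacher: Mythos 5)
Your proposal is correct and follows essentially the same route as the paper's proof: the change-of-variables identity $\int(\varphi\circ g)\,d(\mu\circ\varphi^{-1})=\int g\,d\mu$ via distribution functions, then \autoref{prp:Lamperti:RNderivative_integral} for the isometry, and the two Radon--Nikodym identities of \autoref{prp:Lamperti:RNderivative_formulas} for the composition law, with surjectivity deduced from $u_{\varphi^{-1}}$ being an inverse. Your explicit remark that $|\varphi\circ\xi|^p=\varphi\circ|\xi|^p$ because $\varphi$ preserves the level sets is a small point the paper uses implicitly; otherwise the two arguments coincide.
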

\begin{proof}
%We first verify the result for $p=1$.
Let $f\in L_{\mathbb{R}}^1(\mu)_+$.
Then
\begin{align*}
\int f d\mu
&= \int_{0}^\infty \mu\big( \level{f> t} \big)dt\\
&= \int_{0}^\infty (\mu\circ\varphi^{-1}\circ\varphi)\big( \level{f> t} \big)dt \\
&= \int_{0}^\infty (\mu\circ\varphi^{-1})\big( \level{ (\varphi\circ f) > t } \big)dt \\
&= \int \big( \varphi\circ f \big)\, d(\mu\circ\varphi^{-1}).
\end{align*}
It follows that the formula $\int f d\mu = \int \big( \varphi\circ f \big)\, d(\mu\circ\varphi^{-1})$ holds for arbitrary $f\in L^1(\mu)$.

Note that both $\mu$ and $\mu\circ\varphi^{-1}$ are strictly positive, semi-finite, completely additive valuations on $\frakA$.
We may therefore apply \autoref{prp:Lamperti:RNderivative_integral} and obtain that
\[
\int \big( \varphi\circ f \big)\, d(\mu\circ\varphi^{-1})
= \int \big( \varphi\circ f \big) \frac{d(\mu\circ\varphi^{-1})}{d\mu}\, d\mu.
\]

To show that $u_\varphi$ is well-defined and isometric, let $\xi\in L^p(\mu)$.
We have
\[
\left|(\varphi\circ\xi) \left(\frac{d(\mu\circ\varphi^{-1})}{d\mu}\right)^{1/p}\right|^p
= |(\varphi\circ\xi)|^p \frac{d(\mu\circ\varphi^{-1})}{d\mu}
\]
and therefore
\begin{align*}
\|u_\varphi(\xi)\|_p^p
&= \int \left|(\varphi\circ\xi) \left(\frac{d(\mu\circ\varphi^{-1})}{d\mu}\right)^{1/p}\right|^p\, d\mu
= \int |(\varphi\circ\xi)|^p \frac{d(\mu\circ\varphi^{-1})}{d\mu}\, d\mu \\
&= \int |(\varphi\circ\xi)|^p\, d(\mu\circ\varphi^{-1})
= \int |\xi|^p\, d\mu
= \|\xi\|_p^p.
\end{align*}
It follows that $u_\varphi\colon L^p(\mu)\to L^p(\mu)$ is a linear, isometric map.

To verify that  $u_\psi\circ u_\varphi=u_{\psi\circ\varphi}$, let $\xi\in L^p(\mu)$.
Using \autoref{prp:Lamperti:RNderivative_formulas} at the third and fourth steps, we obtain
\begin{align*}
(u_\psi\circ u_\varphi)(\xi)
&= u_\psi\left( (\varphi\circ\xi) \cdot \left(\frac{d(\mu\circ\varphi^{-1})}{d\mu}\right)^{1/p} \right) \\
&= (\psi\circ\varphi\circ\xi) \cdot \psi\circ\left(\frac{d(\mu\circ\varphi^{-1})}{d\mu}\right)^{1/p} \cdot \left(\frac{d(\mu\circ\psi^{-1})}{d\mu}\right)^{1/p} \\
&\stackrel{\eqref{prp:Lamperti:RNderivative_formulas:eqPush}}{=} (\psi\circ\varphi\circ\xi) \cdot \left( \frac{d(\mu\circ\varphi^{-1}\circ\psi^{-1})}{d(\mu\circ\psi^{-1})} \cdot \frac{d(\mu\circ\psi^{-1})}{d\mu}\right)^{1/p} \\
&\stackrel{\eqref{prp:Lamperti:RNderivative_formulas:eqProd}}{=} (\psi\circ\varphi\circ\xi) \cdot \left( \frac{d(\mu\circ\varphi^{-1}\circ\psi^{-1})}{d\mu}\right)^{1/p}
= u_{\psi\circ\varphi}(\xi).
\end{align*}

It is easy to see that $u_{\id}$ is the identity. Since $u_{\varphi^{-1}}\circ u_\varphi = u_{\id}$, it follows that $u_\varphi$ is surjective.
\end{proof}

%==========================================================================================
Let $(\frakA,\mu)$ be a localizable measure algebra, and let $p\in[1,\infty)$.
%We use $\Isom(L^p(\mu))$ to denote the group of linear, surjective isometries of $L^p(\mu)$.
By Lemmas~\ref{prp:Lamperti:IsomLpu_pi} and~\ref{prp:Lamperti:IsomLpMult}, every automorphism of $\frakA$, and 
every $\TT$-valued function on $\frakA$, induces surjective, linear isometries of $L^p(\mu)$.
Moreover, the resulting maps
$u\colon \Aut(\frakA)\to\Isom(L^p(\mu))$ and $m\colon \mathcal{U}(L^\infty(\frakA))\to\Isom(L^p(\mu))$ are group homomorphisms.
The next result clarifies the interplay between the two types of isometries of $L^p(\mu)$.

%==========================================================================================
\begin{lma}
\label{prp:Lamperti:IsomLpInterplay}
Let $(\frakA,\mu)$ be a localizable measure algebra, let $\varphi\in\Aut(\frakA)$, and let $f\in \mathcal{U}(L^\infty(\frakA))$.
Then $u_\varphi m_f u_{\varphi^{-1}} = m_{\varphi\circ f}$.
\end{lma}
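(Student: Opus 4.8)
The plan is to verify the identity $u_\varphi m_f u_{\varphi^{-1}} = m_{\varphi \circ f}$ by a direct computation on an arbitrary element $\xi \in L^p(\mu)$, unwinding the definitions of $u_\varphi$, $u_{\varphi^{-1}}$ from \eqref{prp:Lamperti:IsomLpu_pi:eq} and of $m_f$ from \autoref{prp:Lamperti:IsomLpMult}. First I would apply $u_{\varphi^{-1}}$ to $\xi$, obtaining $(\varphi^{-1}\circ\xi)\cdot\left(\tfrac{d(\mu\circ\varphi)}{d\mu}\right)^{1/p}$; then multiply by $f$; then apply $u_\varphi$, which precomposes everything with $\varphi$ and multiplies by $\left(\tfrac{d(\mu\circ\varphi^{-1})}{d\mu}\right)^{1/p}$. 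The point is that $\varphi\circ(\varphi^{-1}\circ\xi) = \xi$ (since $\Aut(\frakA)$ acts by composition on measurable functions $\mathfrak{B}(\RR)\to\frakA$), so the $\xi$-part comes out unchanged, $f$ becomes $\varphi\circ f$, and the only thing left to check is that the two Radon-Nikodym factors cancel.

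The key step is therefore the scalar identity
\[
\left(\varphi\circ\left(\tfrac{d(\mu\circ\varphi)}{d\mu}\right)^{1/p}\right)\cdot\left(\tfrac{d(\mu\circ\varphi^{-1})}{d\mu}\right)^{1/p} = 1,
\]
i.e. $\left(\varphi\circ\tfrac{d(\mu\circ\varphi)}{d\mu}\right)\cdot\tfrac{d(\mu\circ\varphi^{-1})}{d\mu} = 1$ in $L^0_\RR(\frakA)_+$. For this I would invoke \autoref{prp:Lamperti:RNderivative_formulas}: by \eqref{prp:Lamperti:RNderivative_formulas:eqPush} applied with $\mu\circ\varphi$ in place of $\mu$ and $\mu$ in place of $\sigma$, we have $\varphi\circ\tfrac{d(\mu\circ\varphi)}{d\mu} = \tfrac{d(\mu\circ\varphi\circ\varphi^{-1})}{d(\mu\circ\varphi^{-1})} = \tfrac{d\mu}{d(\mu\circ\varphi^{-1})}$, and then by the product rule \eqref{prp:Lamperti:RNderivative_formulas:eqProd} this times $\tfrac{d(\mu\circ\varphi^{-1})}{d\mu}$ equals $\tfrac{d\mu}{d\mu} = 1$. (One also notes that raising to the power $1/p$ commutes with the Boolean homomorphism $\varphi$ and with products, by the defining level-set characterizations in \autoref{pgr:L0}, so it is harmless to pull the exponent outside.) Putting these pieces together yields $(u_\varphi m_f u_{\varphi^{-1}})(\xi) = (\varphi\circ f)\cdot\xi = m_{\varphi\circ f}(\xi)$.

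I do not expect a genuine obstacle here; the statement is essentially bookkeeping. The one place requiring a little care is the bracketed cancellation of the Radon-Nikodym factors, where one must be careful that the chain rule \eqref{prp:Lamperti:RNderivative_formulas:eqPush} is being applied to the correct pair of valuations and that $\varphi$ acts correctly on the $1/p$-th power; once those are pinned down, the computation closes. I would present it as a short chain of equalities on $u_\varphi m_f u_{\varphi^{-1}}(\xi)$ with the two applications of \autoref{prp:Lamperti:RNderivative_formulas} flagged, mirroring the style of the computation already given in the proof of \autoref{prp:Lamperti:IsomLpu_pi}.
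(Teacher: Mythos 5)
Your argument is correct, but it takes a slightly different route from the paper's, so a comparison is worthwhile. The paper does not expand the triple product $u_\varphi m_f u_{\varphi^{-1}}$ at all: it verifies the intertwining relation $u_\varphi m_f = m_{\varphi\circ f}\,u_\varphi$ in one line, computing
\[
u_\varphi(m_f(\xi)) \;=\; \bigl(\varphi\circ(f\cdot\xi)\bigr)\cdot\Bigl(\tfrac{d(\mu\circ\varphi^{-1})}{d\mu}\Bigr)^{1/p}
\;=\; (\varphi\circ f)\cdot(\varphi\circ\xi)\cdot\Bigl(\tfrac{d(\mu\circ\varphi^{-1})}{d\mu}\Bigr)^{1/p}
\;=\; m_{\varphi\circ f}(u_\varphi(\xi)),
\]
using only the multiplicativity of $g\mapsto\varphi\circ g$ on $L^0(\frakA)$. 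In this ordering the Radon--Nikodym factor is a spectator appearing identically on both sides, so no cancellation is required, and the stated identity follows by composing on the right with $u_{\varphi^{-1}}=u_\varphi^{-1}$. Your head-on expansion instead forces you to cancel $\varphi\circ\tfrac{d(\mu\circ\varphi)}{d\mu}$ against $\tfrac{d(\mu\circ\varphi^{-1})}{d\mu}$, for which you correctly instantiate \eqref{prp:Lamperti:RNderivative_formulas:eqPush} (with $\mu\circ\varphi$ and $\mu$ as the two valuations) and then \eqref{prp:Lamperti:RNderivative_formulas:eqProd}; your remark that the $1/p$-th power commutes with $\varphi$ and with products, both being determined by level sets as in \autoref{pgr:L0}, is exactly the right justification for pulling the exponent outside. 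Both proofs are valid; the paper's ordering of the computation simply renders \autoref{prp:Lamperti:RNderivative_formulas} unnecessary here and keeps the proof to two lines, whereas your version makes the chain-rule content explicit.
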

\begin{proof}
Given $\xi\in L^p(\mu)$, we have
\begin{align*}
u_\varphi(m_u(\xi))
&=(\varphi\circ(f\cdot\xi))\cdot\left(\tfrac{d(\mu\circ\varphi^{-1})}{d\mu}\right)^{1/p}
= (\varphi\circ f)\cdot (\varphi\circ\xi)\cdot\left(\tfrac{d(\mu\circ\varphi^{-1})}{d\mu}\right)^{1/p} \\
&= m_{\varphi\circ f}(u_\varphi(\xi)).
\end{align*}
It follows that $u_\varphi m_u=m_{\varphi\circ f}u_\varphi$.
\end{proof}

%==========================================================================================
We let $\Aut(\frakA)$ act on $\mathcal{U}(L^\infty(\frakA))$ by $\pi\cdot f = \pi\circ f$.
It follows from \autoref{prp:Lamperti:IsomLpInterplay} that there is a group homomorphism 
$\mathcal{U}(L^\infty(\mu)) \rtimes \Aut(\frakA) \to \Isom(L^p(\mu))$.
The Banach-Lamperti Theorem for localizable measure algebras (\autoref{prp:Lamperti:MainResult}) 
implies that this map is surjective for $p\neq 2$.

\begin{rmk}\label{rmk:Clark}
In \cite{Cla36UnifCvxSpaces}, Clarkson proved that if $\mu$ denotes the Lebesgue measure on $[0,1]$ or the counting measure on $\NN$, then for any $\xi,\eta\in L^p(\mu)$, one has
\begin{align*}
\|\xi+\eta\|_p^p + \|\xi-\eta\|_p^p \geq 2(\|\xi\|_p^p+\|\eta\|_p^p),\quad \text{ if } 2\leq p<\infty \\
\|\xi+\eta\|_p^p + \|\xi-\eta\|_p^p \leq 2(\|\xi\|_p^p+\|\eta\|_p^p),\quad \text{ if } 1<p\leq 2.
\end{align*}
It is by now a folklore result that Clarkson's inequalities hold for arbitrary measure spaces, and thus for arbitrary measure algebras.
Moreover, for $p\neq 2$, one has equality in the above inequalities if and only if the product $\xi\cdot\eta$ is zero (almost everyhwere).
\end{rmk}

%==========================================================================================
\begin{lma}
\label{prp:Lamperti:disjointPres}
Let $(\frakA,\mu)$ be a measure algebra, let $p\in (1,\infty)$ with $p\neq 2$, and let $T\colon L^p(\mu)\to L^p(\mu)$ be a linear isometry.
Then $T$ is disjointness preserving, that is, we have $T(\xi)\cdot T(\eta)=0$ for all $\xi,\eta\in L^p(\mu)$ with $\xi\cdot\eta=0$.
\end{lma}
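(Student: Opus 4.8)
The plan is to read off disjointness preservation directly from the equality case of Clarkson's inequalities recorded in \autoref{rmk:Clark}, so no real machinery beyond that remark is needed.

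First I would fix $\xi,\eta\in L^p(\mu)$ with $\xi\cdot\eta=0$. Since $p\neq 2$, the characterization of the equality case in \autoref{rmk:Clark} applies and yields
\[
\|\xi+\eta\|_p^p + \|\xi-\eta\|_p^p = 2\big(\|\xi\|_p^p + \|\eta\|_p^p\big).
\]
Next, using that $T$ is linear and isometric, we have $\|T\xi+T\eta\|_p=\|T(\xi+\eta)\|_p=\|\xi+\eta\|_p$, and likewise $\|T\xi-T\eta\|_p=\|\xi-\eta\|_p$, $\|T\xi\|_p=\|\xi\|_p$, and $\|T\eta\|_p=\|\eta\|_p$. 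Substituting these into the displayed identity gives
\[
\|T\xi+T\eta\|_p^p + \|T\xi-T\eta\|_p^p = 2\big(\|T\xi\|_p^p + \|T\eta\|_p^p\big),
\]
that is, equality holds in Clarkson's inequality for the pair $T\xi,T\eta$. Applying the ``only if'' direction of the equality statement in \autoref{rmk:Clark} (again invoking $p\neq 2$), we conclude $T\xi\cdot T\eta=0$, as desired. Note that surjectivity of $T$ is not used.

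The argument is uniform in the two regimes $1<p<2$ and $2<p<\infty$: the Clarkson inequalities point in opposite directions there, but the description of when equality occurs is the same, and that is all that enters. Consequently there is no genuinely hard step; the only point deserving care is that one is entitled to use \autoref{rmk:Clark} — both the inequalities and the equality case — for an arbitrary measure algebra $(\frakA,\mu)$, rather than merely for Lebesgue measure on $[0,1]$ or counting measure on $\NN$, which is exactly the folklore extension noted there.
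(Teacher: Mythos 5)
Your proposal is correct and is essentially identical to the paper's proof: both transfer the equality case of Clarkson's inequalities (as recorded in \autoref{rmk:Clark}, valid for arbitrary measure algebras) through the linear isometry $T$ and use $p\neq 2$ to conclude disjointness of $T(\xi)$ and $T(\eta)$. The only cosmetic difference is that the paper computes $\|\xi+\eta\|_p^p+\|\xi-\eta\|_p^p=2\|\xi\|_p^p+2\|\eta\|_p^p$ directly from $\xi\cdot\eta=0$, whereas you quote the ``if'' direction of the equality characterization; these are the same observation.
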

\begin{proof}
Let $\xi,\eta\in L^p(\mu)$ satisfy $\xi\cdot\eta=0$.
Using that $T$ is linear and isometric at the first and last step, and using that $\xi\cdot\eta=0$ at the second step, we deduce that
\begin{align*}
\|T(\xi)+T(\eta)\|_p^p+\|T(\xi)-T(\eta)\|_p^p
%& = \|T(\xi+\eta)\|_p^p+\|T(\xi-\eta)\|_p^p \\
& = \|\xi+\eta\|_p^p+\|\xi-\eta\|_p^p \\
&= 2\|\xi\|_p^p + 2\|\eta\|_p^p \\
&= 2\|T(\xi)\|_p^p + 2\|T(\eta)\|_p^p.
\end{align*}
We thus have equality in Clarkson's inequality for $T(\xi)$ and $T(\eta)$.
As explained in \autoref{rmk:Clark}, using that $p\neq 2$, we obtain that $T(\xi)\cdot T(\eta)=0$.
\end{proof}

%==========================================================================================
For $z\in \mathbb{C}$ and $\varepsilon>0$, we denote by $B_\varepsilon(z)$
the open ball of radius $\varepsilon$ centered at $z$. 

\begin{thm}
\label{prp:Lamperti:MainResult}
Let $(\frakA,\mu)$ be a localizable measure algebra, let $p\in (1,\infty)$ with $p\neq 2$, and let $T\colon L^p(\mu)\to L^p(\mu)$ be a surjective, linear isometry.
Then there exist a unique Boolean automorphism $\varphi\colon\frakA\to\frakA$ and a unique function $f\in\mathcal{U}(L^\infty(\mu))$ such that $T=m_fu_\varphi$. 
%Moreover, we have $T_\varphi m_u T_{\varphi{-1}} = m_{u\circ\varphi}$.
It follows that
\[
\mathcal{U}(L^\infty(\mu)) \rtimes \Aut(\frakA) \cong \Isom(L^p(\mu)).
\]

Moreover, given $f,g\in\mathcal{U}(L^\infty(\mu))$ and $\varphi,\psi\in\Aut(\frakA)$, we have
\begin{align}
\label{prp:Lamperti:MainResult:eqNorm}
\| m_f u_\varphi - m_g u_\psi \|
= \max\{ \|f-g\|_\infty, 2\delta_{\varphi,\psi} \}.
\end{align}
%Further, the group homomorphism $\Aut(\frakA)\to\Isom(L^p(\mu))$ is a homeomorphism onto its image for the topology on $\frakA$ defined in ?? and the strong operator topology on $\Isom(L^p(\mu))$.
\end{thm}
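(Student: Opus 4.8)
The plan is to prove the three assertions in order: first the existence and uniqueness of the decomposition $T = m_f u_\varphi$, then the semidirect product identification (which is essentially a restatement), and finally the norm formula \eqref{prp:Lamperti:MainResult:eqNorm}. For the existence part, I would argue as follows. By \autoref{prp:Lamperti:disjointPres}, the surjective isometry $T$ is disjointness preserving, and so is $T^{-1}$ (which is again a surjective isometry). A disjointness preserving bijection between the spaces $L^p(\mu)$ over a localizable measure algebra induces a map on the underlying band structure: sending a characteristic function $\charFct_a$ to the support of $T(\charFct_a)$ should define an order isomorphism $\varphi\colon\frakA\to\frakA$, i.e.\ a Boolean automorphism. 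Concretely, since $a\mapsto \charFct_a$ identifies $\frakA$ with the lattice of idempotents/bands of $L^p(\mu)$, and $T$ carries disjoint elements to disjoint elements, $T(\charFct_a)$ is supported on a well-defined element $\varphi(a)\in\frakA$, and disjointness-preservation of $T^{-1}$ makes $\varphi$ a bijection compatible with $\vee,\wedge,{}^c$. Having built $\varphi$, I would form $S = u_{\varphi^{-1}} T$, which by \autoref{prp:Lamperti:IsomLpu_pi} is again a surjective linear isometry, and which now \emph{preserves supports}: $S(\charFct_a)$ is supported on $a$ for each $a$. The standard argument (as in Lamperti) then shows such an $S$ is given by multiplication by a fixed $f\in\mathcal{U}(L^\infty(\mu))$: on each band the isometry acts as multiplication by a scalar of modulus one determined by comparing $S$ on $\charFct_a$ and $\charFct_b$ for $b\le a$, and semi-finiteness lets one assemble these local scalars into a single $\TT$-valued measurable function $f$ using completeness of $\frakA$. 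Then $T = m_f u_\varphi$. Uniqueness is immediate: if $m_f u_\varphi = m_g u_\psi$, applying both sides to $\charFct_a$ and comparing supports forces $\psi = \varphi$, hence $m_f = m_g$, hence $f = g$.

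The identification $\mathcal{U}(L^\infty(\mu)) \rtimes \Aut(\frakA) \cong \Isom(L^p(\mu))$ is then a formality: by Lemmas~\ref{prp:Lamperti:IsomLpMult} and~\ref{prp:Lamperti:IsomLpu_pi} the map $(f,\varphi)\mapsto m_f u_\varphi$ lands in $\Isom(L^p(\mu))$, \autoref{prp:Lamperti:IsomLpInterplay} shows it is a homomorphism for the semidirect product structure with $\Aut(\frakA)$ acting on $\mathcal{U}(L^\infty(\mu))$ by $\varphi\cdot f = \varphi\circ f$, surjectivity is the decomposition just proved, and injectivity is the uniqueness.

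For the norm formula, I would first handle the case $\varphi = \psi$: then $m_f u_\varphi - m_g u_\varphi = m_{f-g} u_\varphi$, and since $u_\varphi$ is a surjective isometry, $\|m_{f-g}u_\varphi\| = \|m_{f-g}\| = \|f-g\|_\infty$, matching the right-hand side since $\delta_{\varphi,\varphi}=0$. For $\varphi\ne\psi$, the inequality $\le$ follows from the triangle inequality: $\|m_f u_\varphi - m_g u_\psi\| \le \|m_f u_\varphi\| + \|m_g u_\psi\| = 2$, and since $2\delta_{\varphi,\psi}=2$ and $\|f-g\|_\infty\le 2$, the max on the right equals $2$. The content is the reverse inequality $\|m_f u_\varphi - m_g u_\psi\|\ge 2$ when $\varphi\ne\psi$: pick $a\in\frakA$ with $\varphi(a)\ne\psi(a)$; replacing $a$ by a smaller element using semi-finiteness, we may assume $0<\mu(a)<\infty$ and (shrinking further, using that $\varphi(a)\not\le\psi(a)$ or $\psi(a)\not\le\varphi(a)$, say the former) that $\varphi(a)\wedge\psi(a) = 0$ while keeping $\mu(a)$ finite and positive — here one must be a little careful, but the point is that one can find a nonzero $a'\le a$ of finite positive measure with $\varphi(a')$ and $\psi(a')$ disjoint. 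Then $\xi = \charFct_{a'}/\mu(a')^{1/p}$ is a unit vector, $m_f u_\varphi(\xi)$ is supported on $\varphi(a')$ and $m_g u_\psi(\xi)$ on $\psi(a')$, so these two images are disjoint unit vectors, whence $\|m_f u_\varphi(\xi) - m_g u_\psi(\xi)\|_p^p = \|m_f u_\varphi(\xi)\|_p^p + \|m_g u_\psi(\xi)\|_p^p = 2$, giving $\|m_f u_\varphi - m_g u_\psi\|\ge 2^{1/p}$; to get the sharp constant $2$ one instead evaluates on $\xi + \zeta$ and $\xi - \zeta$ type combinations, or more simply notes that disjointness of the two images means the operator $m_f u_\varphi - m_g u_\psi$ restricted to the span of $\charFct_{a'}$ and an appropriate companion has norm $2$; concretely, taking $\eta$ with $\|\eta\|_p = 1$ supported on $a'$ and noting both operators act isometrically into disjoint bands, $\|(m_fu_\varphi - m_gu_\psi)\eta\|_p = (\|m_fu_\varphi\eta\|_p^p + \|m_gu_\psi\eta\|_p^p)^{1/p} = 2^{1/p}$, which is not yet $2$ — so the correct sharp argument is to build a two-dimensional test space and observe the operator is, up to isometric identifications of domain and codomain pieces, the map $(s,t)\mapsto (s,-t)$ composed with a block-diagonal inclusion, hence has norm realized by unbalanced vectors. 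I expect \textbf{this last sharp norm computation to be the main obstacle}: getting $\ge 2$ rather than merely $\ge 2^{1/p}$ requires exploiting that the two supports $\varphi(a')$, $\psi(a')$ are disjoint \emph{and} that one can feed the operator a vector of the form $u_{\varphi^{-1}}(\charFct_{a'}) $ scaled so that $m_f u_\varphi$ and $-m_g u_\psi$ add constructively in norm across disjoint bands, i.e.\ using a vector $\xi$ with $u_\varphi(\xi)$ concentrated where we want and computing $\|m_f u_\varphi \xi\|_p + \|m_g u_\psi\xi\|_p$ as an \emph{operator} norm lower bound via duality, pairing with a functional that sees both bands with the same sign — this is where disjointness of the bands is exactly what makes the bound $2$ achievable.
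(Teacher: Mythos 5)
Your treatment of the decomposition $T=m_fu_\varphi$ and of the semidirect product identification follows essentially the same route as the paper: use \autoref{prp:Lamperti:disjointPres} to extract a Boolean automorphism $\varphi$ from supports (with the caveat, which you implicitly address via semi-finiteness, that for $\mu(a)=\infty$ one has $\charFct_a\notin L^p(\mu)$, so $\varphi(a)$ must be defined as a supremum of $\supp(T(\xi))$ over $\xi$ supported in $a$), then reduce to a support-preserving surjective isometry and show that it is multiplication by a unimodular measurable function. That part, and the uniqueness argument, are fine.

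The norm formula is where the real problem lies, and you have located it exactly. Your observation that the test vector $\charFct_{a'}/\mu(a')^{1/p}$ only yields the lower bound $2^{1/p}$ is correct, and the obstacle you flag cannot be overcome, because \eqref{prp:Lamperti:MainResult:eqNorm} is false as stated. Take $\frakA$ to be the power set of a two-point set with counting measure, so that $L^p(\mu)=\ell^p_2$; let $\varphi=\id$, let $\psi$ be the flip (so $u_\psi=\sigma$, the coordinate swap), and let $f\equiv 1$, $g\equiv -i$. Then $m_fu_\varphi-m_gu_\psi=1+i\sigma$ and the right-hand side of \eqref{prp:Lamperti:MainResult:eqNorm} equals $2$. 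But $\ell^p_2$ is strictly convex for $1<p<\infty$ and its unit sphere is compact, so $\|1+i\sigma\|=2$ would force a unit vector $\xi$ with $\xi=i\sigma\xi$, which is impossible since $\sigma^2=1$; a direct computation gives $\|1+i\sigma\|=2^{3/4}$ for $p=4$. (The paper's own argument at this step rests on the identity $\|\eta_1-\eta_2\|_p=\|\eta_1\|_p+\|\eta_2\|_p$ for disjointly supported $\eta_1,\eta_2$, which holds only for $p=1$; for $p>1$ one gets $\|\eta_1-\eta_2\|_p^p=\|\eta_1\|_p^p+\|\eta_2\|_p^p$, i.e.\ exactly your $2^{1/p}$.) Your proposed repairs cannot help: duality against a fixed test vector $\xi$ can never exceed $\|(m_fu_\varphi-m_gu_\psi)\xi\|_p$, and the two-dimensional model you describe is precisely $1+i\sigma$ above. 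The statement that is actually true, and which is all that is needed downstream (to keep the circles $\TT\lambda_p(s)$ at uniformly positive distance and thus identify $\pi_0(\Unitary(\cdot))$ with $G$), is: $\|m_fu_\varphi-m_gu_\psi\|=\|f-g\|_\infty$ when $\varphi=\psi$, and $2^{1/\min(p,p')}\le\|m_fu_\varphi-m_gu_\psi\|\le 2$ when $\varphi\neq\psi$ (the bound $2^{1/p'}$ coming from applying your disjoint-support argument to the transposed operators on $L^{p'}(\mu)$). You should prove and use this weaker estimate rather than chase the constant $2$.
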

\begin{proof}
Given $\xi\in L^p(\mu)$, we set $\supp(\xi)=\level{|\xi|>0}\in\frakA$.
We define $\varphi\colon\frakA\to\frakA$ by
\[
\varphi(a)= \bigvee \big\{ \supp(T(\xi)) \colon  \xi\in L^p(\mu) \text{ with } \supp(\xi)\leq a \big\},
\]
for $a\in\frakA$.
Since $\mu$ is semi-finite, we have
\[
a= \bigvee \big\{ \supp(\xi) \colon  \xi\in L^p(\mu) \text{ with } \supp(\xi)\leq a \big\},
\]
for all $a\in\frakA$.
Since $T$ is bijective, it follows that $\varphi$ is bijective, with inverse given by
\[
\varphi^{-1}(a) = \bigvee \big\{ \supp(T^{-1}(\xi)) \colon  \xi\in L^p(\mu) \text{ with } \supp(\xi)\leq a \big\},
\]
for $a\in\frakA$.
It follows that $\varphi$ and $\varphi^{-1}$ are order-preserving.
It is also clear that $\varphi(0)=0$ and $\varphi(1)=1$.
To show that $\varphi$ is a Boolean isomorphism, it remains to show that $\varphi$ is additive on orthogonal elements.
So let $a,b\in\frakA$ with $a\wedge b=0$.
%We claim that $\varphi(a)\wedge\varphi(b)=0$ and $\varphi(a+b)=\varphi(a)+\varphi(b)$.
%To prove the claim,
Let $\xi,\eta\in L^p(\mu)$ satisfy $\supp(\xi)\leq a$ and $\supp(\eta)\leq b$.
Then $\xi\cdot\eta=0$, and thus $T(\xi)\cdot T(\eta)=0$ by \autoref{prp:Lamperti:disjointPres}.
It follows that $\supp(T(\xi))$ and $\supp(T(\eta))$ are orthogonal.
Applying the definition of $\varphi$, it follows that $\varphi(a)$ and $\varphi(b)$ are orthogonal.
Since $\varphi$ is order-preserving, we deduce that $\varphi(a+b)\geq\varphi(a)+\varphi(b)$.
The same argument applies to $\varphi^{-1}$, whence
\[
\varphi(a)+\varphi(b)
=\varphi\big( \varphi^{-1}(\varphi(a)+\varphi(b)) \big)
\geq \varphi\big( \varphi^{-1}(\varphi(a)) + \varphi^{-1}(\varphi(b)) ) \big)
= \varphi(a+b).
\]
Thus $\varphi$ is a Boolean automorphism.

Set $S=T\circ u_\varphi^{-1} \in\Isom(L^p(\mu))$.
By construction, we have $\supp(S(\xi))=\supp(\xi)$ for every $\xi\in L^p(\mu)$.
Let $a\in\frakA$ with $\mu(a)<\infty$.

\textbf{Claim:} \emph{$S(\charFct_a)$ takes values in $\TT$ on $a$, that is, $\level{S(\charFct_a)\in\TT}=a$.}
To prove the claim, assume that it does not hold.
Choose $z\in\CC$ and $\varepsilon>0$ with $||z|-1|>\varepsilon^{1/p}$ such that the element
$b=\level{S(\charFct_a)\in B_\varepsilon(z)}$ is nonzero.
Then $\|\charFct_b\|_p=\mu(b)^{1/p}$.
On the other hand, we have $\|S(\charFct_b)-z\charFct_b\|_\infty\leq\varepsilon$ and therefore
\begin{align*}
%\big| \|S(\charFct_b)\|_p - |z|\mu(b)^{1/p} \big|
\big| \|S(\charFct_b)\|_p - \|z\charFct_b\|_p \big|
\leq \|S(\charFct_b)-z\charFct_b\|_p
\leq \varepsilon^{1/p}\mu(b)^{1/p},
\end{align*}
while
\[
|\|z\charFct_b\|_p-\|\charFct_b\|_p|=||z|-1|\mu(b)^{1/p}>\varepsilon^{1/p}\mu(b)^{1/p},
\]
which together imply that $ \|S(\charFct_b)\|_p \neq  \|\charFct_b\|_p$.
%\[
%\big| \|S(\charFct_b)\|_p - \|\charFct_b\|_p \big|
%\geq \big| \|z\charFct_b\|_p-\|\charFct_b\|_p \big| - \|S(\charFct_b)\|_p - \|z\charFct_b\|_p \big|
%> \varepsilon^{1/p}\mu(b)^{1/p} - \varepsilon^{1/p}\mu(b)^{1/p} = 0.
%\]
This contradicts that $S$ is isometric, thus proving the claim.

Let $M\subseteq\CC$ be a measurable subset. %\in\mathfrak{B}(\CC)$.
It follows from additivity of $S$ that 
\[\level{S(\charFct_b)\in M}=b\wedge\level{S(\charFct_a)\in M}\] 
whenever $a,b\in\frakA$ satisfy $b\leq a$.
Thus, there is a unique function $f\in L^0(\frakA)$ with
\[
\level{f\in M} = \bigvee_{\mu(a)<\infty} \level{S(\charFct_a)\in M},
\]
for a measurable subset $M\subseteq\CC$.
Then $\level{f\in \TT}=1$ and thus $f\in\mathcal{U}(L^\infty(\frakA))$.
Further, for each $a\in\frakA$ with $\mu(a)<\infty$ we have $S(\charFct_a)=f\cdot \charFct_a$.
Using that $S$ is linear we obtain that $S$ is given by multiplication with $f$ for every simple function in $L^p(\mu)$.
Since $S$ is norm-continuous (even isometric) and since simple functions are norm-dense in $L^p(\mu)$, we deduce that $S=m_f$.
We omit the details.

It follows that $T\circ u_\varphi^{-1}=m_f$ and thus $T=m_f\circ u_\varphi$, as desired.

To verify the formula \eqref{prp:Lamperti:MainResult:eqNorm}, let $f,g\in\mathcal{U}(L^\infty(\mu))$, and let $\varphi,\psi\in\Aut(\frakA)$.
We first assume that $\varphi\neq\psi$.
Choose a nonzero $a\in\frakA$ with $\varphi(a)\neq\psi(a)$.
If $\varphi(a)\setminus\psi(a)\neq 0$, set $b=\varphi^{-1}(\varphi(a)\setminus\psi(a))$.
Otherwise, if $\psi(a)\setminus\varphi(a)\neq 0$, set $b=\psi^{-1}(\psi(a)\setminus\varphi(a))$.
In both cases, we have $b\neq 0$ and $\varphi(b)\wedge\psi(b)=0$.
Using that $(\frakA,\mu)$ is localizable, choose $c\in\frakA$ with $0\neq c\leq b$ and $\mu(c)<\infty$.
Set $\xi=\tfrac{1}{\mu(c)^{1/p}}\charFct_c$, which belongs to $L^p(\mu)$ with $\|\xi\|_p=1$.
By construction, $(m_f u_\varphi)(\xi)$ and $(m_g u_\psi)(\xi)$ have disjoint supports, whence
\[
\| m_f u_\varphi - m_g u_\psi \|
\geq \| ( m_f u_\varphi - m_g u_\psi)(\xi) \|_p
= \| (m_f u_\varphi)(\xi) \|_p + \|(- m_g u_\psi)(\xi) \|_p = 2.
\]
Conversely, we have $\| m_f u_\varphi - m_g u_\psi \|\leq \| m_f u_\varphi\| + \| m_g u_\psi \|=2$.

We now assume that $\varphi=\psi$.
Then $\|m_f u_\varphi - m_g u_\psi \|= \|m_f-m_g\|$.
The operator $m_f-m_g$ is given by multiplication by $f-g$, and it is standard to verify that this operator has norm
$\|f-g\|_\infty\leq 2$, which verifies \eqref{prp:Lamperti:MainResult:eqNorm}.
%Let $(\varphi_j)_j$ be a net in $\Aut(\frakA)$ converging to $\varphi$.
\end{proof}

%==========================================================================================
%==========================================================================================
\section{The unitary group of \texorpdfstring{$p$}{p}-convolution algebras}
\label{sec:UnitGpConvAlg}
\subsection{Convolution algebras}

Let $G$ be a \lcg, and fix a left Haar measure $\mu$ on $G$.
We let $M^1(G)$ denote the space of complex-valued Radon measures on $G$ of bounded variation.
Under convolution, $M^1(G)$ becomes a unital Banach algebra. %, called the measure algebra of $G$.

For $p\in [1,\infty)$, we let $\lambda_p$ and $\rho_p$ denote the left and right regular representation of $G$ on $L^p(G)$, respectively, which are given by:
\[
\lambda_p(s)(\xi)(t) = \xi(s^{-1}t),\andSep
\rho_p(s)(\xi)(t) = \xi(ts),
\]
for $\xi\in L^p(G)$ and $s,t\in G$.

The integrated form of $\lambda_p$ is the contractive, nondegenerate representation of $L^1(G)$ on $L^p(G)$, also denoted by $\lambda_p$,
which is given by convolution on the left.
Similarly, $\mu\in M^1(G)$ acts by left convolution on $L^p(G)$.
This defines a unital, contractive representation of $M^1(G)$ on $L^p(G)$, also denoted by $\lambda_p$.

Let us recall some Banach algebras that are naturally associated with $G$:

\begin{dfn}
The Banach algebra $\PF_p(G)= \overline{\lambda_p(L^1(G))}^{\|\cdot\|}\subseteq \mathbb{B}(L^p(G))$ is the algebra of \emph{$p$-pseudofunctions} on $G$, and was introduced by Herz in \cite{Her73SynthSubgps}.
It has also been called the \emph{reduced group $L^p$-operator algebra} of $G$, and denoted by $F^p_\lambda(G)$, particularly
in the work of N.~Christopher Phillips.

We also consider the algebra of \emph{$p$-pseudomeasures}, denoted by $\PM_p(G)$, which is defined as the closure of $\PF_p(G)$ in the \weakStar-topology of $\Bdd(L^p(G))$.
Finally, the algebra of \emph{$p$-convolvers}, denoted by $\CV_p(G)$, is defined as follows:
\[
\CV_p(G) = \big\{ a\in\Bdd(L^p(G)) \colon a\rho_p(s) = \rho_p(s)a \text{ for all } s \in G \big\}.
\]
\end{dfn}

The algebras $\PM_p(G)$ and $\CV_p(G)$ are always unital, and $\PF_p(G)$ has a contractive, approximate identity.
If $G$ is nondiscrete, we will also consider the left-multiplier algebra of $\PF_p(G)$, which we denote by $M_l(\PF_p(G))$.
Since $\PF_p(G)$ has a contractive, approximate identity, and since $\PF_p(G)\subseteq\Bdd(L^p(G))$ is a nondegenerate subalgebra, it is
a standard result that $M_l(\PF_p(G))$ has a canonical, isometric representation as a unital subalgebra of $\mathbb{B}(L^p(G))$;
see for example \cite[Theorem~4.1]{GarThi17arX:ExtendingRepr} for a complete proof.

%==========================================================================================
\begin{lma}
\label{prp:approxiIdCOnvId}
Let $G$ be a locally compact groyp, let $(f_j)_j$ be a contractive approximate identity in $L^1(G)$, and let $p\in(1,\infty)$.
Then the net $(\lambda_p(f_j))_j$ converges \weakStar{} to $1$ in $\Bdd(L^p(G))$.
\end{lma}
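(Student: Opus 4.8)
The plan is to reduce the \weakStar{} statement to a norm-convergence statement in $L^p(G)$ and then settle the latter by factorization. First I would note that, since $(f_j)_j$ is contractive, $\|\lambda_p(f_j)\|\leq\|f_j\|_1\leq 1$, so the net $(\lambda_p(f_j))_j$ is bounded in $\Bdd(L^p(G))$. As $p>1$, the space $L^p(G)$ is reflexive and $\Bdd(L^p(G))$ is identified with the dual of $L^p(G)\widehat{\otimes}L^{p'}(G)$, the duality pairing being determined on elementary tensors by $\langle a,\xi\otimes\eta\rangle=\langle a\xi,\eta\rangle$. A bounded net in a dual Banach space converges \weakStar{} as soon as it converges when tested against a dense subspace of the predual; since the elementary tensors span such a subspace, it suffices to show that $\langle\lambda_p(f_j)\xi,\eta\rangle\to\langle\xi,\eta\rangle$ for all $\xi\in L^p(G)$ and $\eta\in L^{p'}(G)$. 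By H\"older's inequality, $|\langle\lambda_p(f_j)\xi-\xi,\eta\rangle|\leq\|\lambda_p(f_j)\xi-\xi\|_p\|\eta\|_{p'}$, so in fact it is enough to prove that $\|\lambda_p(f_j)\xi-\xi\|_p\to 0$ for every $\xi\in L^p(G)$.

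For this last point I would use that $\lambda_p$ is a nondegenerate, contractive representation of $L^1(G)$ on $L^p(G)$, so that $L^p(G)$ is an essential Banach left $L^1(G)$-module. By the Cohen--Hewitt factorization theorem, every $\xi\in L^p(G)$ factors as $\xi=g*\zeta$ with $g\in L^1(G)$ and $\zeta\in L^p(G)$; then $\|\lambda_p(f_j)\xi-\xi\|_p=\|(f_j*g-g)*\zeta\|_p\leq\|f_j*g-g\|_1\|\zeta\|_p\to 0$, because $(f_j)_j$ is an approximate identity in $L^1(G)$. (One could equally well invoke the form of the Cohen--Hewitt theorem asserting that a bounded approximate identity of a Banach algebra converges strongly to the identity on every essential module.) Combining this with the reduction of the previous paragraph yields the lemma.

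The one point that genuinely requires care is precisely this passage from the $L^1$-approximate-identity property to convergence in the $L^p$-norm. Because $(f_j)_j$ is an \emph{arbitrary} contractive approximate identity, its members need not be supported in shrinking neighbourhoods of the unit, so one cannot argue directly from the continuity of $s\mapsto\langle\lambda_p(s)\xi,\eta\rangle$ at $s=e$ together with the identity $\langle\lambda_p(f_j)\xi,\eta\rangle=\int_G f_j(s)\langle\lambda_p(s)\xi,\eta\rangle\,ds$. The factorization argument sidesteps this difficulty and delivers the claim uniformly in the choice of approximate identity.
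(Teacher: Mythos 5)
Your argument is correct and follows the same route as the paper: reduce, via boundedness of the net and the duality $\Bdd(L^p(G))\cong\bigl(L^p(G)\widehat{\otimes}L^{p'}(G)\bigr)^*$, to testing against elementary tensors, and then use $\|f_j\ast\xi-\xi\|_p\to 0$. The only difference is that the paper simply quotes this last norm-convergence as standard, whereas you justify it via Cohen--Hewitt factorization (density of $L^1(G)\ast L^p(G)$ plus boundedness of the net would do equally well); this is a legitimate filling-in of a detail the paper omits.
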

\begin{proof}
%For each $i$, set $e_j=\lambda_p(f_j)$.
Let $p'$ be the dual H\"older exponent of $p$.
We identify $\Bdd(L^p(G))$ with the dual of $L^p(G)\tensMax L^{p'}(G)$.
Since the net $(\lambda_p(f_j))_j$ is bounded, it is enough to verify that $\langle \lambda_p(f_j),\omega\rangle \to \langle 1,\omega\rangle$ where $\omega$ is a simple tensor in $L^p(G)\tensMax L_{p'}(G)$.
Let $\xi\in L^p(G)$ and $\eta\in L_{p'}(G)$.
Then $\|f_j\ast\xi-\xi\|_p\to 0$, which implies that
\[
\langle \lambda_p(f_j),\xi\otimes\eta\rangle
= \langle f_j\ast\xi,\eta\rangle
\to \langle \xi,\eta\rangle
= \langle 1,\xi\otimes\eta\rangle,
\]
as desired.
\end{proof}

%==========================================================================================
\begin{prp}
\label{prp:inclusions}
Let $G$ be a \lcg, and let $p\in(1,\infty)$.
Then there are natural (isometric) inclusions:
\[
\PF_p(G)
%\subseteq M^p_\lambda(G)
\subseteq M_l(\PF_p(G))
\subseteq \PM_p(G)
\subseteq \CV_p(G).
\]
\end{prp}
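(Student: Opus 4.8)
The plan is to establish the four-term chain by treating each of the three inclusions separately, in each case checking that the natural map is isometric.

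First I would recall the general fact about multiplier algebras: since $\PF_p(G)$ acts nondegenerately on $L^p(G)$ and has a contractive approximate identity $(\lambda_p(f_j))_j$ coming from an approximate identity $(f_j)_j$ of $L^1(G)$, the left-multiplier algebra $M_l(\PF_p(G))$ embeds isometrically into $\Bdd(L^p(G))$ as the set of operators $T$ with $T\PF_p(G)\subseteq\PF_p(G)$; this is the cited \cite[Theorem~4.1]{GarThi17arX:ExtendingRepr}. Under this identification $\PF_p(G)\subseteq M_l(\PF_p(G))$ is the obvious (isometric) inclusion, since $\PF_p(G)$ is a subalgebra of $\Bdd(L^p(G))$. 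For the second inclusion $M_l(\PF_p(G))\subseteq\PM_p(G)$, I would argue as follows. Let $T\in M_l(\PF_p(G))\subseteq\Bdd(L^p(G))$. Then $T\lambda_p(f_j)\in\PF_p(G)\subseteq\PM_p(G)$ for every $j$. By \autoref{prp:approxiIdCOnvId}, $\lambda_p(f_j)\to 1$ weak$^*$ in $\Bdd(L^p(G))$; since multiplication on the left by the fixed bounded operator $T$ is weak$^*$-continuous, $T\lambda_p(f_j)\to T$ weak$^*$. As $\PM_p(G)$ is by definition weak$^*$-closed, we conclude $T\in\PM_p(G)$. This inclusion is automatically isometric because both algebras carry the operator norm inherited from $\Bdd(L^p(G))$. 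For the third inclusion $\PM_p(G)\subseteq\CV_p(G)$: each $\rho_p(s)$ is a bounded operator commuting with every $\lambda_p(f)$, $f\in L^1(G)$, hence with every element of $\PF_p(G)$; since $a\mapsto a\rho_p(s)-\rho_p(s)a$ is weak$^*$-continuous on bounded sets and vanishes on $\PF_p(G)$, it vanishes on the weak$^*$-closure $\PM_p(G)$. Thus $\PM_p(G)\subseteq\CV_p(G)$, again isometrically since both sit inside $\Bdd(L^p(G))$ with the operator norm.

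The one point requiring a little care is the weak$^*$-continuity statements used for the second and third inclusions: strictly speaking, multiplication $\Bdd(L^p(G))\times\Bdd(L^p(G))\to\Bdd(L^p(G))$ is only separately weak$^*$-continuous, but that is exactly what is needed here since in each case one factor ($T$, or $\rho_p(s)$) is held fixed. Concretely, identifying $\Bdd(L^p(G))$ with the dual of $L^p(G)\tensMax L^{p'}(G)$, for fixed $T$ one has $\langle T b,\xi\otimes\eta\rangle=\langle b, \xi\otimes T'\eta\rangle$ where $T'$ is the Banach-space adjoint acting on $L^{p'}(G)$, so $b\mapsto Tb$ is weak$^*$-continuous; similarly for $b\mapsto bT$ and for the maps involving $\rho_p(s)$. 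I expect this to be the only genuine step; everything else is a bookkeeping of definitions. I would close by remarking that the chain also shows $\PF_p(G)$ is a (non-unital, unless $G$ is discrete) ideal-free subalgebra of the three unital algebras above it, consistent with the discussion preceding the proposition.
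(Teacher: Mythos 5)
Your proposal is correct and follows essentially the same route as the paper: the middle inclusion is proved exactly as in the text (approximate identity, \autoref{prp:approxiIdCOnvId}, and weak$^*$-continuity of left multiplication by a fixed operator), while for the first inclusion you take the harmless shortcut of noting that $\PF_p(G)$ is an algebra instead of verifying the multiplier property on the dense set $\lambda_p(L^1(G))$, and for the third you spell out the commutation argument that the paper dismisses as well known. The only blemish is the closing aside calling $\PF_p(G)$ an ``ideal-free subalgebra'' --- it is in fact a left ideal in $M_l(\PF_p(G))$ by construction --- but this remark plays no role in the proof.
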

\begin{proof}
%The inclusion $\PF_p(G)\subseteq M^p_\lambda(G)$ is clear.
The last inclusion $\PM_p(G) \subseteq \CV_p(G)$ is well-known.
Let us verify that $\PF_p(G)\subseteq M_l(\PF_p(G))$. For this,
%Since $\lambda_p(M^1(G))$ is norm-dense in $M^p_\lambda(G)$,
it is enough to verify $\lambda_p(L^1(G))\subseteq M_l(\PF_p(G))$.
Let $f\in L^1(G)$.
To verify that $\lambda_p(f)$ is a left multiplier for $\PF_p(G)$, let $b\in \PF_p(G)$.
We need to show that $\lambda_p(f)b\in \PF_p(G)$.
Choose a net $(b_j)_j$ in $L^1(G)$ such that $\lambda_p(b_j)\xrightarrow{\|\cdot\|}b$.
Since elements of $L^1(G)$ are left multipliers for $L^1(G)$, we have $fb_j\in L^1(G)$ and therefore $\lambda_p(f)\lambda_p(b_j)\in \PF_p(G)$, for each $j$.
Since $\lambda_p(f)\lambda_p(b_j)\xrightarrow{\|\cdot\|}\lambda_p(f)b$, we conclude that $\lambda_p(f)b\in \PF_p(G)$, as desired.

To verify that $M_l(\PF_p(G))\subseteq\PM_p(G)$, let $S\in M_l(\PF_p(G))$.
Let $(f_j)_j$ be a contractive approximate identity in $L^1(G)$.
By \autoref{prp:approxiIdCOnvId}, the net $(\lambda_p(f_j))_j$ converges \weakStar{} to $1$ in $\Bdd(L^p(G))$.
For each $j$, we have $\lambda_p(f_j)\in \PF_p(G)$, and therefore $S\lambda_p(f_j)\in \PF_p(G)$.
Since left multiplication by $S$ is \weakStar{} continuous on $\Bdd(L^p(G))$, we deduce that $S\lambda_p(f_j)\xrightarrow{w^*}S$.
Thus, $S$ belongs to the \weakStar-closure of $\PF_p(G)$, and hence to $\PM_p(G)$.
\end{proof}

\subsection{The unitary group of \texorpdfstring{$\CV_p(G))$}{CVp(G))}}
\label{sec:UCVp}

%==========================================================================================
Throughout this section, we let $G$ denote a locally compact group.
We let $\Sigma$ denote the family of Haar measurable subsets of $G$, which includes all Borel subsets of $G$.
Further, $\mu\colon\Sigma\to[0,\infty]$ denotes a fixed left Haar measure for $G$.

We let $(\frakA,\mu)$ denote the measure algebra of $(G,\Sigma,\mu)$, that is, the Boolean algebra obtained by taking the quotient of $\Sigma$ by the ideal of null sets.
Given $M\in\Sigma$, we let $[M]$ denote the class of $M$ in $\frakA$.
It is well-known that the Haar measure on a locally compact group is localizable, which implies that $\frakA$ is a complete Boolean algebra and  $(\frakA,\mu)$ is localizable; see for example \cite[443A(a), p.290]{Fre-MsrThy4a}.

We let $\mathcal{T}$ denote the collection of open subsets of $G$, and we set
\[
\frakA_o = \big\{ [U] \colon  U\in\mathcal{T} \big\}, \andSep
\frakA_c= \big\{[K]\colon K\subseteq G \text{ compact} \big\}.
\]

The canonical quotient map $\Sigma\to\frakA$ preserves the order, finite infima and finite suprema, but not arbitrary suprema or infima (whenever they exist in $\Sigma$).
However, the restriction $\mathcal{T}\to\frakA_o$ does preserve arbitrary suprema;
see \cite[Theorem~414A, p.50]{Fre-MsrThy4a}.
It follows that $\frakA_o$ is closed under arbitrary suprema in $\frakA$.
%(The Haar measure is a Radon measure, hence effectively locally finite and $\tau$-additive.)

Let $s\in G$.
The maps $\Sigma\to\Sigma$ given by $M\mapsto sM$ and $M\mapsto Ms$ induce maps on $\frakA$, and we denote them by $l_s$ and $r_s$, respectively.
It is straightforward to verify that $l_s,r_s\in\Aut(\frakA)$.
Since $l_{st}=l_s\circ l_t$ and $r_{st}=r_t\circ r_s$ for all $s,t\in G$, we obtain a left action $l\colon G\to\mathrm{Aut}(\frakA)$ and a right action $r\colon G\to\mathrm{Aut}(\frakA)$ of $G$ on $\frakA$;
for details we refer to \cite[Theorem~443C, p.291]{Fre-MsrThy4a}.

Given an arbitrary subset $Z\subseteq G$, we define the \emph{measurable cover} of $Z$ as
\[
[Z]^*=\bigwedge\big\{ [C]\colon C\in\Sigma, Z\subseteq C \big\} \in \frakA.
\]

%==========================================================================================
\begin{prp}
\label{prp:UCVp:supTranslates}
Let $G$ be a locally compact group, and let $T,M\subseteq G$ with $M$ measurable.
Then there exists a measurable subset $M'\subseteq M$ with $[M']=[M]$ satisfying the following properties:
\begin{itemize}
 \item For every measurable subset $Y\subseteq G$ such that $tM'\setminus Y$ is negligible for every $t\in T$, the set $Y\setminus TM'$ is negligible.
 \item For every measurable subset $Y\subseteq G$ such that $M't\setminus Y$ is negligible for every $t\in T$, the set $Y\setminus M'T$ is negligible.
\end{itemize}
This means that the following identities hold in the complete Boolean algebra $\frakA$:
\[
\bigvee_{t\in T}l_t([M]) = [TM']^*, \andSep
\bigvee_{t\in T}r_t([M]) = [M'T]^*.
\]
\end{prp}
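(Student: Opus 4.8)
The plan is to carry out a sequence of reductions and then to identify the measurable cover by combining the topological regularity of Haar measure with the fact that the open‑modulo‑null elements of $\frakA$ form a join‑complete sublattice.

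\textbf{Reductions.} It suffices to produce, for each measurable $M$, a set $M'\subseteq M$ with $[M']=[M]$ and $\bigvee_{t\in T}l_t([M])=[TM']^*$; the assertion about right translates is proved by an entirely analogous argument, and a single $M'$ witnessing both is obtained by intersecting: if $M_1'$ works for the left statement and $M_2'$ for the right one, then $M'=M_1'\cap M_2'$ satisfies $[M']=[M]$, and since $TM'\subseteq TM_1'$ and $M'T\subseteq M_2'T$ we get $[TM']^*\le[TM_1']^*=\bigvee_t l_t[M]$ and $[M'T]^*\le[M_2'T]^*=\bigvee_t r_t[M]$. The reverse inequality $\bigvee_{t\in T}l_t([M])\le[TM']^*$ holds automatically for any $M'\subseteq M$ with $[M']=[M]$, since $tM'$ is a measurable subset of $TM'$ with $[tM']=l_t([M'])=l_t([M])$; so it remains to find $M'$ with $[TM']^*\le s:=\bigvee_{t\in T}l_t([M])$.

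\textbf{A regular representative and a reduction to compact sets.} The inequality can fail for $M'=M$: a $\mu$-null subset of $M$ may have translates whose union is non-measurable with positive outer measure, so that $[TM]^*$ strictly exceeds $s$. The remedy is to replace $M$ by a topologically regular representative. Fix an open, $\sigma$-compact subgroup $H\le G$ and a transversal $R$ of $G/H$, giving a decomposition $G=\bigsqcup_{c\in R}cH$ of $(G,\mu)$ into open, $\sigma$-finite pieces. Writing $M_c=M\cap cH$, inner regularity of Haar measure on each $cH$ lets us choose compact sets $K_{c,n}\subseteq M_c$ with $\bigcup_n K_{c,n}$ conull in $M_c$; set $M'=\bigsqcup_{c\in R}\bigcup_n K_{c,n}$, so that $[M']=[M]$ (checked coset by coset, using that $\mu$ is strictly localizable with respect to $\{cH\}$). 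Replacing $T$ by its closure is harmless: if $\mu(B)<\infty$ then $t\mapsto[tB]$ is continuous for the measure metric (because $t\mapsto\charFct_{tB}$ is continuous into $L^1(\mu)$), so the relations $[tK_{c,n}]\le s$ pass to limits and $s=\bigvee_{t\in\overline T}l_t([M])$; moreover products $\overline T\cdot K_{c,n}$ of a closed set with a compact set are closed. Hence $TM'$ is, on each target coset, a countable union of closed sets, in particular measurable, so $[TM']^*=[TM']$, and it remains to prove $[TM']\le s$, i.e.\ that up to a $\mu$-null set $TM'$ is already covered by countably many of the translates $tM'$.

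\textbf{The core step and the main obstacle.} What is left is a statement about $\sigma$-finite measure algebras: on each coset $dH$ one must show that the measurable set $TM'\cap dH=\bigcup_{t\in T}t\bigl(M'\cap c(t,d)H\bigr)$, where $c(t,d)\in R$ is determined by $tc(t,d)\in dH$, coincides up to a negligible set with a countable subunion. Here both the regularity of $\mu$ and the join‑completeness of $\frakA_o$ enter: approximating each compact $K_{c,n}$ from outside by an open set of nearly the same measure, the contribution of these open approximations to $TM'\cap dH$ is controlled by a countable subfamily because $\mathcal{T}\to\frakA_o$ preserves arbitrary suprema, while the arbitrarily small remainder is absorbed by a finite‑measure exhaustion argument inside the $\sigma$-compact set $dH$. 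The whole difficulty of the proposition — and the reason one cannot simply take $M'=M$ — is that an \emph{uncountable} union of $\mu$-null sets need not be $\mu$-null; passing to the regular representative $M'$, together with the closedness of the products $\overline T\cdot K_{c,n}$ and the two regularity properties just mentioned, is precisely what neutralises this phenomenon.
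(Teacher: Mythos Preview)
Your argument has two genuine gaps, and the second one is the heart of the matter.

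First, the measurability claim fails. You write that $\overline TM'\cap dH$ is a countable union of closed sets, but in fact
\[
\overline TM'\cap dH \;=\; \bigcup_{c\in R}\,\bigcup_{n}\,(\overline T\cap dHc^{-1})\,K_{c,n},
\]
and for a fixed target coset $dH$ the set of $c\in R$ with $\overline T\cap dHc^{-1}\neq\emptyset$ can be uncountable (take $T=G$). So you have not shown that $\overline TM'$ is measurable when $G$ is not $\sigma$-compact.

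Second, and more seriously, the ``core step'' you isolate is the entire proposition (with $M$ replaced by a compact set), and your sketch does not prove it. Approximating a compact $K$ from outside by an open $U$ with $\mu(U\setminus K)<\varepsilon$ gives, via the preservation of suprema by $\mathcal T\to\frakA_o$, that
\[
[\overline TK]\;\le\;[\overline TU]\;=\;\bigvee_{t}[tU]\;=\;\bigvee_{t}[tK]\ \vee\ \bigvee_{t}[t(U\setminus K)].
\]
But $\bigvee_{t}[t(U\setminus K)]$ can equal $1$ no matter how small $\mu(U\setminus K)$ is: there is no control whatsoever on the supremum of translates of a small set by an arbitrary $T$. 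No ``finite-measure exhaustion inside $dH$'' fixes this, because the issue is not a divergent countable sum of small measures but a single small set whose $T$-translates cover everything. This is precisely the uncountable-union-of-null-sets obstacle you correctly identify, and passing to a $\sigma$-compact representative of $M$ does nothing to neutralise it.

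The paper's proof is entirely different. After reducing to $\mu(M)<\infty$, it fixes a measurable representative $Y$ of $\bigvee_t l_t([M])$ and a (nonnegative) approximate identity $(h_j)$ in $L^1(G)$, then extracts a sequence $(j_k)$ along which $\charFct_M\ast h_{j_k}$ and $\charFct_Y\ast h_{j_k}$ converge pointwise on a conull set $G'$, and sets $M'=M\cap G'$. The key computation is that for $t\in T$ and $x\in M'$,
\[
(\charFct_Y\ast h_{j_k})(tx)\;\ge\;(\charFct_{tM'}\ast h_{j_k})(tx)\;=\;(\charFct_{M'}\ast h_{j_k})(x)\;\longrightarrow\;1,
\]
the inequality coming from $[tM']\le[Y]$ and the equality from left invariance. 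Thus the limit of $(\charFct_Y\ast h_{j_k})$ is $1$ on all of $TM'$; since on $G'$ this limit is $\charFct_Y$, one gets $TM'\cap G'\subseteq Y$, hence $[TM']^*\le[Y]$. The trick is that the troublesome translate $t$ is shifted from the argument to the convolved function and then cancelled, so that the pointwise limit at an arbitrary point $tx\in TM'$ is reduced to the pointwise limit at the controlled point $x\in G'$.
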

\begin{proof}
We first explain how to reduce the proof to the case that $\mu(M)$ is finite.
Using that $\mu$ is strictly localizable (see \cite[443A(a), p.290]{Fre-MsrThy4a}), we choose a partition of $G$ in the sense of \cite[Definition~211E, p.12]{Fre-MsrThy2}, that is, a family $(B_j)_j$ of pairwise disjoint, measurable sets with $\mu(B_j)<\infty$ that cover $G$, such that a subset $Z\subseteq G$ is measurable if and only if $Z\cap B_j$ is measurable for every $j\in J$, and such that $\mu(Z)=\sum_j\mu(Z\cap B_j)$ for every $Z\in\Sigma$.

Set $M_j=M\cap B_j$.
Assume that for each $j$, we can find a measurable subset $M_j'\subseteq M_j$ with $[M_j']=[M_j]$, and such that
\[
\bigvee_{t\in T}l_t([M_j]) = [TM_j']^*, \andSep
\bigvee_{t\in T}r_t([M_j]) = [M_j'T]^*.
\]
Set $M'=\bigcup_j M_j'$.
Then $M'\in\Sigma$, $M'\subseteq M$ and $[M']=[M]$.
Further, we have $[M]=\bigvee_j [M_j]$ in $\frakA$.
It is straightforward to verify that the measurable cover of $\bigcup_j TM_j'$ is given by $\bigvee_j[TM_j']^*$.
It follows that
\begin{align*}
\bigvee_{t\in T}l_t([M])
&= \bigvee_{t\in T}l_t(\bigvee_j[M_j])
= \bigvee_{t\in T}\bigvee_jl_t([M_j])
= \bigvee_j\bigvee_{t\in T}l_t([M_j]) \\
&= \bigvee_j [TM_j']^*
= [\bigcup_j TM_j']^*
= [TM']^*.
\end{align*}
Analogously, we obtain that $\bigvee_{t\in T}r_t([M])=[M'T]^*$.

It is therefore enough to assume, as we do from now on, that $\mu(M)<\infty$.
Choose $Y,Z\in\Sigma$ with $[Y]=\bigvee_{t\in T}l_t([M])$ and $[Z]=\bigvee_{t\in T}r_t([M])$.
Let $(h_j)_{j\in J}$ be an approximate identity for the convolution algebra $L^1(G)$.
%Let $\mathcal{N}$ denote the family of open neighborhoods of the unit of $G$ that have compact closure.
%We order $\mathcal{N}$ by reverse inclusion, giving it the structure of a directed partially ordered set.
%For $U\in\mathcal{N}$ set $h_U=\frac{1}{\mu(U)}\charFct_U$.
%Then $(h_U)_{U\in\mathcal{N}}$ is an approximate unit for $L^1(G)$. %, that is, we have $\lim_{U\in\mathcal{N}}\|\xi-\xi\star h_U\|_1=0$ for every $\xi\in L^1(G)$.
%Then $\|\charFct_M-\charFct_M\ast h_j\|_1$, $\|\charFct_M-h_j\ast\charFct_M\|_1$, $\|\charFct_Y-\charFct_Y\ast h_j\|_1$, and $\|\charFct_Y-h_j\ast\charFct_Y\|_1$ each converge to zero.
Then $\lim_{j\in J}h_j\ast\charFct_Z = \charFct_Z$ and
\[
\lim_{j\in J}\charFct_M\ast h_j
= \lim_{j\in J}h_j\ast\charFct_M = \charFct_M, \ \
\lim_{j\in J}\charFct_Y\ast h_j = \charFct_Y
\]
in $L^1(G)$.
Using that $L^1(G)$ is metrizable, we choose a subsequence $(j_n)_{n\in\NN}$ of $J$ such that $\lim_{n\in\NN}h_{j_n}\ast\charFct_Z = \charFct_Z$ and
\[
\lim_{n\in\NN}\charFct_M\ast h_{j_n}
= \lim_{n\in\NN}h_{j_n}\ast\charFct_M = \charFct_M, \ \
\lim_{n\in\NN}\charFct_Y\ast h_{j_n} = \charFct_Y,
\]
in $L^1(G)$.
Every $L^1$-convergent sequence admits a subsequence that converges pointwise almost everywhere.
We first obtain a subsequence $(n(k))_k$ such that $(\charFct_M\ast h_{j_{n(k)}})_k$ converges pointwise almost everywhere.
Applying the same result to the sequence $(h_{j_{n(k)}}\ast\charFct_M)_k$ and then to the sequences involving $\charFct_Y$ and $\charFct_Z$, and after reindexing,
we find a conull subset $G'\subseteq G$ and a sequence $(j_k)_{k\in\NN}$ in $J$ such that
\begin{align*}
\lim_{k\in\NN} (\charFct_M\ast h_{j_k})(x)
&= \lim_{k\in\NN} (h_{j_k}\ast\charFct_M)(x)
= \charFct_M(x), \\
\lim_{k\in\NN} (\charFct_Y\ast h_{j_k})(x)
= \charFct_Y(x), &\andSep
\lim_{k\in\NN} (h_{j_k}\ast\charFct_Z)(x)
= \charFct_Z(x),
\end{align*}
for every $x\in G'$.
Set $M'=M\cap G'$.
Then $M'\subseteq M$ and $[M']=[M]$.
For each $t\in T$, the set $tM'\setminus Y$ is a null set.
Using this at the second step, we deduce that
\[
1\geq (\charFct_{Y}\ast h_{j_k})(tx)
\geq (\charFct_{tM'}\ast h_{j_k})(tx)
%= (L_t(\charFct_{M})\ast h_{j_k})(tx)
%= L_t(\charFct_{M}\ast h_{j_k})(tx)
= (\charFct_{M'}\ast h_{j_k})(x),
\]
for every $t\in T$,  $x\in M'$ and $k\in\NN$.
It follows that
\[
\lim_{k\in\NN}(\charFct_{Y}\ast h_{j_k})(y)=1,
\]
for every $y\in TM'$. %, and in particular for $y\in TM'\cap G'$.
On the other hand, we have $\lim_{k\in\NN} (\charFct_Y\ast h_{j_k})(y)=\charFct_Y(y)$ for $y\in G'$.
It follows that $TM'\cap G'\subseteq Y$.
Then
\[
[TM']^*
= [TM'\cap G']^*
\leq[Y]
=\bigvee_{t\in T}l_t([M]).
\]
On the other hand, we have
\[
l_t([M])=l_t([M'])=[tM']
= [tM']^*
\leq [TM']^*,
\]
and thus $\bigvee_{t\in T}l_t([M])\leq[TM']^*$.
Analogously, we get  $\bigvee_{t\in T}r_t([M])=[MT']^*$.
\end{proof}

%==========================================================================================
\begin{cor}
\label{prp:UCVp:supOpenFrakA}
%We have $\bigvee_{s\in V}l_s(a),\bigvee_{s\in V}r_s(a)\in\frakA_0$ for every $a\in\frakA$ and every open $V\subseteq G$.
Let $a\in\frakA$, and let $V\subseteq G$ be open.
Then $\bigvee_{s\in V}l_s(a)$ and $\bigvee_{s\in V}r_s(a)$ belong to $\frakA_o$.
\end{cor}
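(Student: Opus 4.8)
The plan is to reduce the statement to \autoref{prp:UCVp:supTranslates} together with the closure properties of $\frakA_o$ already recorded in the text. First I would pick a measurable set $M\subseteq G$ with $[M]=a$, and apply \autoref{prp:UCVp:supTranslates} with $T=V$ to obtain a measurable $M'\subseteq M$ with $[M']=a$ such that
\[
\bigvee_{s\in V}l_s(a) = [VM']^*, \andSep \bigvee_{s\in V}r_s(a) = [M'V]^*.
\]
So it suffices to show that $[VM']^*$ and $[M'V]^*$ lie in $\frakA_o$, where $V$ is open.

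For this I would argue that $VM'$ and $M'V$ are, in fact, honestly open sets, not merely open modulo null sets. Indeed, $VM'=\bigcup_{x\in M'}Vx$ is a union of translates of the open set $V$, hence open; similarly $M'V=\bigcup_{x\in M'}xV$ is open. For an open set $U$, the measurable cover $[U]^*$ coincides with $[U]$ itself: since $U$ is measurable we have $U\subseteq U$, so $[U]^*\leq[U]$, and conversely $[U]\leq[C]$ for every measurable $C\supseteq U$, so $[U]\leq[U]^*$. Therefore $[VM']^*=[VM']\in\frakA_o$ and $[M'V]^*=[M'V]\in\frakA_o$, which is exactly what we want.

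The only point that requires a small amount of care — and the one I would flag as the main (if modest) obstacle — is making sure the identities of \autoref{prp:UCVp:supTranslates} are being invoked correctly: that proposition produces a single $M'$ that works simultaneously for the left and the right suprema, so no compatibility issue arises between the two halves of the statement. Everything else is formal: the openness of $VM'$ and $M'V$ is immediate from writing them as unions of translates of $V$, and the equality $[U]^*=[U]$ for open $U$ is a one-line consequence of the definition of the measurable cover. Thus the corollary follows with essentially no further work beyond quoting the preceding proposition.
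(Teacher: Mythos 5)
Your proposal is correct and follows the paper's proof essentially verbatim: apply \autoref{prp:UCVp:supTranslates} with $T=V$ to get $M'$, observe that $VM'$ and $M'V$ are open (hence measurable), and conclude that the suprema lie in $\frakA_o$. The small extra remark that $[U]^*=[U]$ for open $U$ is a correct and harmless elaboration of a step the paper leaves implicit.
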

\begin{proof}
Choose $M\in\Sigma$ with $a=[M]$.
Apply~\autoref{prp:UCVp:supTranslates} to obtain $M'\in\Sigma$ with $[M']=[M]=a$ and such that $\bigvee_{t\in T}l_t(a)=[VM']^*$ and $\bigvee_{t\in T}l_t(a)=[VM']^*$.
Then $VM'$ and $M'V$ are open and therefore measurable.
It follows that $\bigvee_{t\in T}l_t([M])=[VM']\in\frakA_o$ and $\bigvee_{t\in T}r_t([M])=[M'V]\in\frakA_o$.
\end{proof}

%==========================================================================================
In preparation for \autoref{prp:UCVp:charLeftTrans}, we characterize when a measurable subset of a
locally compact group is equivalent to an open (or closed) set. This characterization is new and of
independent interest.

%==========================================================================================
\begin{prp}
\label{prp:UCVp:charOpen}
Let $G$ be a locally compact group, let $\frakA$ be the associated complete Boolean algebra of Haar measurable sets modulo null sets, and let $a\in\frakA$.
Denote by $\mathcal{N}$ the family of open neighborhoods of the unit in $G$.
Then the following are equivalent:
\begin{enumerate}
\item
We have $a\in\frakA_o$. %, that is, there exists an open subset $U\subseteq G$ such that $[U]=a$.
\item
The left action $l\colon G\to\mathrm{Aut}(\frakA)$ is lower semicontinuous at $a$, that is, %for every net $(s_j)_j$ converging to $s$ in $G$, we have $\varliminf_j s_j\cdot a = s\cdot a$, equivalently,
$a=\bigvee_{V\in\mathcal{N}} \bigwedge_{s\in V}l_s(a)$.
\item
The right action $r\colon G\to\mathrm{Aut}(\frakA)$ is lower semicontinuous at $a$, that is, $a=\bigvee_{V\in\mathcal{N}} \bigwedge_{s\in V}r_s(a)$.
\end{enumerate}

Analogously, we have $a=[M]$ for some closed subset $M\subseteq G$ if and only if
$l\colon G\to\mathrm{Aut}(\frakA)$ (or equivalently  $r\colon G\to\mathrm{Aut}(\frakA)$) is upper semicontinuous at $a$.
%, that is, we have $a=\bigwedge_{V\in\mathcal{N}} \bigvee_{s\in V}l_s(a)$.
\end{prp}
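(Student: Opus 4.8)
The plan is to prove $(1)\Leftrightarrow(2)$ directly; then $(1)\Leftrightarrow(3)$ follows by the same argument with the left action replaced by the right one throughout (\autoref{prp:UCVp:supOpenFrakA}, and the topological facts used below, all have left- and right-handed versions), and the addendum on closed sets follows by complementation. Indeed, since each $l_s$ is a Boolean automorphism, $1\setminus l_s(a)=l_s(1\setminus a)$, so complementing the identity $a=\bigvee_{V\in\mathcal N}\bigwedge_{s\in V}l_s(a)$ turns it into $1\setminus a=\bigwedge_{V\in\mathcal N}\bigvee_{s\in V}l_s(1\setminus a)$ and conversely; moreover $a\mapsto 1\setminus a$ carries $\frakA_o$ bijectively onto $\{[F]:F\subseteq G\text{ closed}\}$.

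Write $b_V:=\bigwedge_{s\in V}l_s(a)$ for $V\in\mathcal N$; since $e\in V$ one always has $b_V\le a$, so the content of $(2)$ is the reverse inequality $a\le\bigvee_V b_V$. Set $a_0:=\bigvee\{u\in\frakA_o:u\le a\}$, which lies in $\frakA_o$ because $\frakA_o$ is closed under suprema in $\frakA$; thus $a_0$ is the largest open element below $a$, and $(1)$ says exactly $a=a_0$. For $(1)\Rightarrow(2)$ I would write $a=[U]$ with $U$ open. For $s\in V$ the set $s(G\setminus U)$ is contained in $V(G\setminus U)=\bigcup_{w\in G\setminus U}Vw$, which is open, hence measurable; so $\bigvee_{s\in V}l_s([G\setminus U])\le[V(G\setminus U)]$ and therefore $b_V\ge[G\setminus V(G\setminus U)]=:[C_V]$, with $C_V$ closed. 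It remains to check $[U]\le\bigvee_V[C_V]$, and the key point is the set-theoretic identity $U=\bigcup_{V\in\mathcal N}\mathrm{int}(C_V)$: the inclusion $\supseteq$ is clear from $G\setminus U\subseteq V(G\setminus U)$, and for $\subseteq$, joint continuity of the map $(g,y)\mapsto g^{-1}y$ at $(e,x)$ for $x\in U$ produces $V\in\mathcal N$ and an open neighbourhood $N$ of $x$ with $V^{-1}N\subseteq U$, whence $N\cap V(G\setminus U)=\emptyset$ and $x\in N\subseteq\mathrm{int}(C_V)$. Since $U$ is then a union of open sets and $\mathcal T\to\frakA_o$ preserves suprema, we get $[U]=\bigvee_V[\mathrm{int}(C_V)]\le\bigvee_V[C_V]$, as wanted.

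For $(2)\Rightarrow(1)$ I would show $a\le a_0$. Assume not; then $c:=a\setminus a_0\neq 0$, and by semi-finiteness of $\mu$ we may shrink $c$ to a nonzero element with $\mu(c)<\infty$ still satisfying $c\le a$ and $c\wedge a_0=0$. By $(2)$ and the infinite distributive law in the complete Boolean algebra $\frakA$, $c=\bigvee_{V\in\mathcal N}(c\wedge b_V)$, an upward-directed family; as $\mu(c)<\infty$, order-continuity of the measure of a measure algebra on its finite part yields $\mu(c)=\sup_V\mu(c\wedge b_V)$, so $c^*:=c\wedge b_V\neq 0$ for some $V$. Now $c^*\le l_s(a)$ for all $s\in V$, which via $l_s^{-1}=l_{s^{-1}}$ reads $l_t(c^*)\le a$ for all $t\in V^{-1}$; hence $\bigvee_{t\in V^{-1}}l_t(c^*)\le a$, and by \autoref{prp:UCVp:supOpenFrakA}, applied to $c^*$ and the open set $V^{-1}$, this supremum lies in $\frakA_o$, so it is $\le a_0$. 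But $e\in V^{-1}$ forces $c^*\le\bigvee_{t\in V^{-1}}l_t(c^*)\le a_0$, contradicting $0\neq c^*\le c=a\setminus a_0$. Therefore $a=a_0\in\frakA_o$.

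I expect the implication $(2)\Rightarrow(1)$ to be the main obstacle: one must produce an open representative out of the semicontinuity condition, and the naive candidates (such as the topological interior of an arbitrary representative) fail, essentially because the Boolean supremum over the -- possibly uncountable and non-metrizable -- net $\mathcal N$ need not be attained along any countable subfamily, so no limiting argument is available. The argument above sidesteps this entirely: it isolates a \emph{single} $V$ for which a finite piece $c^*$ of the putative non-open part of $a$ lies below every translate $l_s(a)$, $s\in V$, and then uses \autoref{prp:UCVp:supOpenFrakA} to turn the $V^{-1}$-orbit of $c^*$ into an honestly open element still below $a$, contradicting the maximality of $a_0$. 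The remaining inputs -- that $\mathcal T\to\frakA_o$ preserves suprema, order-continuity of $\mu$ on finite elements, and the infinite distributive law -- are routine, the first two following from the localizability of the Haar measure algebra set up earlier.
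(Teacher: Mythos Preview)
Your proof is correct, and in both directions it hinges on the same key input as the paper, namely \autoref{prp:UCVp:supOpenFrakA}. The execution differs most visibly in $(2)\Rightarrow(1)$. You argue by contradiction: shrink the putative non-open part $a\setminus a_0$ to a finite-measure piece $c$, use order-continuity of $\mu$ on the upward-directed family $(c\wedge b_V)_V$ to find a single $V$ with $c^*=c\wedge b_V\neq 0$, and then thicken $c^*$ by $V^{-1}$ via \autoref{prp:UCVp:supOpenFrakA} to produce an element of $\frakA_o$ below $a$, contradicting maximality of $a_0$. The paper's argument is more direct and entirely lattice-theoretic, with no appeal to the measure: for every $V\in\mathcal N$ and every $t\in V^{-1}$ one has $e\in tV$, so $l_t(b_V)=\bigwedge_{s\in V}l_{ts}(a)\le a$; hence $b_V\le\bigvee_{t\in V^{-1}}l_t(b_V)\le a$, and taking the supremum over $V$ gives $a=\bigvee_V b_V\le\bigvee_V\bigvee_{t\in V^{-1}}l_t(b_V)\le a$. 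Each inner supremum lies in $\frakA_o$ by \autoref{prp:UCVp:supOpenFrakA}, and $\frakA_o$ is sup-closed, so $a\in\frakA_o$. In effect the paper applies your ``thicken by $V^{-1}$'' trick to the whole $b_V$ rather than to a finite piece of it, which removes the need for the contradiction wrapper and the measure-theoretic step.

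For $(1)\Rightarrow(2)$ your route via the closed sets $C_V=G\setminus V(G\setminus U)$ and the identity $U=\bigcup_V\mathrm{int}(C_V)$ is a clean variant of the paper's direct chain of inequalities $\bigvee_V\bigwedge_{s\in V}[sU]\ge\bigvee_V[\bigcap_{s\in V}sU]\ge\bigvee_V[\bigcup_{t\in V}t(\bigcap_{s\in V^{-1}V}sU)]=[U]$; both unpack the same topological fact (joint continuity of multiplication at the identity) and both use that $\mathcal T\to\frakA_o$ preserves suprema.
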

\begin{proof}
%Let $\mathcal{N}$ denote the family of open neighborhoods of the identity in $G$.
%Note that $l\colon G\to\mathrm{Aut}(\frakA)$ is lower semicontinuous at $a$ if and only if
%\[
%a=\bigvee_{V\in\mathcal{N}} \bigwedge_{s\in V}l_s(a).
%\]
%Similarly, the right action $r\colon G\to\mathrm{Aut}(\frakA)$ is lower semicontinuous at $a$ if and only if $a=\bigvee_{V\in\mathcal{N}} \bigwedge_{s\in V}r_s(a)$.
To show that~(1) implies~(2), choose an open set $U\subseteq G$ with $a=[U]$.
Using at the third step that the sets $\bigcup_{t\in V}t \left( \bigcap_{s\in V^{-1}V}sU \right)$ are open, we deduce that
\begin{align*}
\bigvee_{V\in\mathcal{N}} \bigwedge_{s\in V}l_s(a)
&= \bigvee_{V\in\mathcal{N}} \bigwedge_{s\in V}[sU]
\geq \bigvee_{V\in\mathcal{N}} \left[\bigcap_{s\in V}sU \right] \\
&\geq \bigvee_{V\in\mathcal{N}} \left[ \bigcup_{t\in V}t \left( \bigcap_{s\in V^{-1}V}sU \right) \right] \\
&= \left[ \bigcup_{V\in\mathcal{N}} \bigcup_{t\in V} t \left( \bigcap_{s\in V^{-1}V}sU \right) \right]
= [U] = a.
\end{align*}
The converse inequality $\bigvee_{V\in\mathcal{N}} \bigwedge_{s\in V}l_s(a)\leq a$ always holds.
The proof that~(1) implies~(3) is analogous.

To show that~(2) implies~(1), assume that $a=\bigvee_{V\in\mathcal{N}} \bigwedge_{s\in V} l_s(a)$.
For each $t\in V^{-1}$, we have $1\in tV$, which implies that
\[
l_{t}(\bigwedge_{s\in V}l_s(a))
= \bigwedge_{s\in V}l_{ts}(a)
\leq a.
\]
Passing to the supremum over $t\in V^{-1}$, we obtain
\[
\bigwedge_{s\in V}l_s(a) \leq \bigvee_{t\in V^{-1}}l_t(\bigwedge_{s\in V}l_s(a)) \leq a.
\]
It follows that
\[
a=\bigvee_{V\in\mathcal{N}} \bigwedge_{s\in V}l_s(a)
\leq \bigvee_{V\in\mathcal{N}} \bigvee_{t\in V^{-1}}l_t(\bigwedge_{s\in V}l_s(a)) \leq a.
\]
By \autoref{prp:UCVp:supOpenFrakA}, we have $\bigvee_{t\in V^{-1}}l_t(\bigwedge_{s\in V}l_s(a))\in\frakA_o$ for every $V\in\mathcal{N}$.
Since $\frakA_o$ is closed under arbitrary suprema in $\frakA$, it follows that $a\in\frakA_0$, as desired.
The proof that~(3) implies~(1) is analogous.

Finally, the characterization for closed sets is proved similarly, and we omit the details.
\end{proof}

%==========================================================================================
Our next goal is to establish an \emph{algebraic} identification of $\TT\times G$ with the invertible isometries in $\CV_p(G)$.
For general locally compact groups, this takes some work, and we will accomplish this in \autoref{prp:UCVp:identifyUCVp}.
The biggest difficulty is dealing with the topology of $G$, and proving that the Boolean automorphism obtained from
the Banach-Lamperti theorem can be lifted to a continuous map on $G$.
This difficulty vanishes when $G$ is discrete, and a very simple proof of \autoref{prp:UCVp:identifyUCVp} can be given in that case.
For the convenience of the reader, and since the case of a discrete group contains the basic ideas of the general proof,
we outline the argument in this situation in \autoref{lma:DiscreteCase}. 

Recall that $\lambda_p$ and $\rho_p$ denote the left and right regular representation of $G$ on $L^p(G)$, respectively.

%Given $p\in[1,\infty)$, we let $\Isom(L^p(G))$ denote the group of surjective, linear isometries on $L^p(G)$.
%For each $s\in G$, we let $u_s$ denote the invertible isometry $T_{l_s}\in\Isom(L^p(G))$ obtained by applying \autoref{prp:Lamperti:IsomLpT_pi} to $l_s\in\Aut(\frakA)$.
%The map $G\to\Isom(L^p(G))$, given by $s\mapsto u_s$, is precisely the left regular representation of $G$ on $L^p(G)$.
%We set $u_s =\lambda_p(\delta_s)$ in $\Bdd(L^p(G))$.

%==========================================================================================
\begin{lma}\label{lma:DiscreteCase}
Let $G$ be a discrete group, let $p\in (1,\infty)$ with $p\neq 2$, and let $u\in\mathcal{U}(CV_p(G))$.
Then there exist $\gamma\in\TT$ and $t\in G$ such that $u=\gamma \lambda_p(t)$.
\end{lma}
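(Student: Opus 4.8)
The plan is to invoke the Banach-Lamperti theorem in the discrete setting, where the measure algebra $\frakA$ is simply the power set of $G$ with counting measure, and then identify which Boolean automorphisms and $\TT$-valued functions can arise from an element commuting with the right regular representation. First I would apply \autoref{prp:Lamperti:MainResult} to the surjective linear isometry $u\in\mathcal{U}(\CV_p(G))\subseteq\Isom(\ell^p(G))$ to write $u=m_f u_\varphi$ for a unique $\varphi\in\Aut(\frakA)$ and a unique $f\in\mathcal{U}(L^\infty(\frakA))$; since $G$ is discrete, $\varphi$ is just a permutation of the set $G$ and $f$ is a function $G\to\TT$, and because the Haar modulus is trivial, $u_\varphi(\xi)=\xi\circ\varphi$, i.e. $u_\varphi(\delta_s)=\delta_{\varphi^{-1}(s)}$. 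Thus $u(\delta_s)=f(\varphi^{-1}(s))\,\delta_{\varphi^{-1}(s)}$.

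Next I would exploit the commutation relation $u\rho_p(t)=\rho_p(t)u$ for all $t\in G$. Computing both sides on basis vectors $\delta_s$ (using $\rho_p(t)\delta_s=\delta_{st^{-1}}$) forces the permutation $\varphi^{-1}$ to be equivariant for right translation: $\varphi^{-1}(st)=\varphi^{-1}(s)t$ for all $s,t\in G$, and simultaneously forces $f\circ\varphi^{-1}$ to be constant along right cosets, hence constant. Setting $t_0=\varphi^{-1}(e)$ gives $\varphi^{-1}(s)=t_0 s$ for all $s$, so $\varphi^{-1}=l_{t_0}$, and the constant value of $f\circ\varphi^{-1}$ is some $\gamma\in\TT$. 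Therefore $u(\delta_s)=\gamma\,\delta_{t_0 s}=\gamma\,\lambda_p(t_0)(\delta_s)$ for every $s$, and since these span a dense subspace, $u=\gamma\lambda_p(t_0)$, which is the desired form with $t=t_0$.

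There is essentially no serious obstacle in the discrete case — this is precisely why the authors single it out as a warm-up. The only points requiring a little care are: (i) checking that for a discrete group the Haar-measure Radon-Nikodym factor $(d(\mu\circ\varphi^{-1})/d\mu)^{1/p}$ in \eqref{prp:Lamperti:IsomLpu_pi:eq} is identically $1$, so that $u_\varphi$ really is the plain composition operator; and (ii) tracking the inverses carefully when translating between the Boolean automorphism $\varphi$ acting on subsets and the induced operator on $\ell^p(G)$, so that the equivariance condition comes out as right-translation invariance rather than something else. Once the commutation relation is written out on the standard basis, the identification of $\varphi^{-1}$ with a left translation $l_{t_0}$ and of $f$ with a constant is immediate, and the uniqueness of $\gamma$ and $t$ follows from the uniqueness clause in \autoref{prp:Lamperti:MainResult} together with linear independence of the $\lambda_p(t)$.
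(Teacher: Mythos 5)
Your proposal is correct and follows essentially the same route as the paper's proof: apply the Banach--Lamperti theorem in its atomic form to write $u=m_fu_\varphi$, then use the commutation with $\rho_p(t)$ to force $\varphi$ to commute with right translations (hence be a left translation) and $f$ to be right-translation-invariant (hence constant). The paper phrases the commutation step via the interplay lemma $u_\varphi m_f u_{\varphi^{-1}}=m_{\varphi\circ f}$ rather than evaluating on the basis vectors $\delta_s$, but this is the same computation.
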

\begin{proof}
Let $s\in G$.
We define the left and right translation maps $\Lt_s,\Rt_s\colon G\to G$ on $G$ by
$\Lt_s(x)=sx$ and $\Rt_s(x)=xs^{-1}$
for all $x\in G$. It is clear that $\lambda_p(s)=u_{\Lt_s}$ and 
$\rho_p(s)=u_{\Rt_{s}}$.

Now let $u\in \mathcal{U}(\CV_p(G))\subseteq \Isom(\ell^p(G))$.
For atomic $L^p$-spaces, the Banach-Lamperti Theorem takes the following simple form:
there exist a bijection $\varphi\colon G\to G$ and a map $h\colon G\to\TT$ such that
$u=m_hu_\varphi$.

Using the assumption that $u$ and $\rho_p(s)$ commute at the third step, and using \autoref{prp:Lamperti:IsomLpInterplay} at the last step, we get
\[
m_h u_{\varphi\circ \Rt_s}
= m_h u_\varphi u_{\Rt_s}
= u \rho_p(s)
= \rho_p(s) u
= u_{\Rt_s} m_h u_\varphi
= m_{h\circ\Rt_s} u_{\Rt_s\circ \varphi}.
\]
This implies that $h=h\circ\Rt_s$ and $\varphi\circ \Rt_s=\Rt_s\circ \varphi$.
Since this holds for all $s\in G$, we immediately deduce that $h$ is a constant function. Denote its unique value by $\gamma\in\TT$, and
set $t=\varphi(e)^{-1}$.
For $x\in G$, we compute
\[
\varphi(x)
= \varphi(ex)
= (\varphi\circ\Rt_{x^{-1}})(e)
= (\Rt_x\circ \varphi)(e)
= \varphi(e)x
= \Lt_t(x)
\]
which implies that $\varphi=\Lt_t$.
Thus, $u_\varphi=\lambda_p(t)$ and consequently $u = m_h u_\varphi = \gamma \lambda_p(t)$, as desired.
\end{proof}

%==========================================================================================
\begin{lma}
\label{prp:UCVp:charLeftTrans}
Let $\pi\in\Aut(\frakA)$ such that $r_t\circ\pi=\pi\circ r_t$ for every $t\in G$.
Then there exists a unique $s\in G$ such that $\pi=l_s$.
\end{lma}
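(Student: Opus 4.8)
The goal is to recover a group element $s$ from a Boolean automorphism $\pi$ of $\frakA$ that commutes with all right translations $r_t$. The strategy is to first isolate a candidate element, then verify $\pi = l_s$ on all of $\frakA$. To produce the candidate, I would use \autoref{prp:UCVp:charOpen}: since each $r_t$ is continuous in $t$ (both as a map $G \to \Aut(\frakA)$ and in the lower/upper semicontinuity sense used there), and $\pi$ intertwines $\pi$ with the $r_t$, the automorphism $\pi$ must carry $\frakA_o$ into itself and, more to the point, must carry the ``atoms of the topology'' — e.g.\ classes of small neighborhoods of the unit — to classes of translates of such neighborhoods. Concretely, fix a neighborhood basis $\mathcal{N}$ of the unit $e$; for $V \in \mathcal{N}$, the element $\pi([V])$ lies in $\frakA_o$ by \autoref{prp:UCVp:charOpen} (applied via condition~(3), using that $r_t \circ \pi = \pi \circ r_t$ to transfer right-lower-semicontinuity at $[V]$ to right-lower-semicontinuity at $\pi([V])$). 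One then shows that $\bigwedge_{V \in \mathcal{N}} \pi([V])$, while zero as an element of $\frakA$, nevertheless pins down a point: the family $\{\pi([V])\}_{V \in \mathcal{N}}$ is a decreasing net of nonzero open classes whose ``intersection of supports'' localizes at a single $s \in G$. The cleanest way to extract $s$ is to note that $[V]$ is characterized among nonzero elements of $\frakA$ by a right-translation property — $r_t([V]) \wedge [V] \neq 0$ iff $t \in V^{-1}V$ — which $\pi$ preserves, forcing $\pi([V])$ to be (the class of) a left translate $sV$ for a uniquely determined $s = s_V$, and the compatibility of the $s_V$ as $V$ shrinks yields a single $s$.

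Once $s$ is in hand, set $\pi' = l_{s^{-1}} \circ \pi$, which again commutes with every $r_t$ and now fixes the classes of arbitrarily small neighborhoods of $e$; the remaining task is to show $\pi' = \id$. For this I would argue as follows. Commutation with all $r_t$ means $\pi'$ descends to an automorphism of $\frakA$ that is equivariant for the right $G$-action. Using $\pi'([V]) = [V]$ for $V \in \mathcal{N}$ and right-equivariance, $\pi'$ fixes $[Vt]$ for every $t \in G$ and every $V \in \mathcal{N}$; that is, $\pi'$ fixes the classes of all ``basic'' open sets $Vt$. Since (by \autoref{prp:UCVp:supTranslates} / \autoref{prp:UCVp:supOpenFrakA}) every element of $\frakA_o$ is a supremum of such $[Vt]$ and $\pi'$ preserves arbitrary suprema, $\pi'$ fixes all of $\frakA_o$ pointwise. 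Finally, every element of $\frakA$ is determined by the elements of $\frakA_o$ above (equivalently, below) it — this is essentially the statement that $\mu$ is outer regular / that $\frakA_o$ is order-dense in the appropriate sense, which holds for the Haar measure on a locally compact group — so an automorphism fixing $\frakA_o$ pointwise is the identity. Hence $\pi = l_s$.

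Uniqueness of $s$ is immediate: if $l_s = l_{s'}$ then $l_{s^{-1}s'} = \id$ on $\frakA$, and since Haar measure is left-invariant but not supported on a coset of a proper closed subgroup unless that subgroup is $G$ itself, left translation by a non-identity element moves some positive-measure set off itself, so $s^{-1}s' = e$.

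**Main obstacle.** The delicate point is the extraction of the candidate $s$ from $\pi$, i.e.\ showing that the decreasing net of open sets $\pi([V])$ actually ``converges to a point.'' The subtlety is that $\bigwedge_V \pi([V]) = 0$ in $\frakA$ (neighborhoods of a point are null for non-discrete $G$), so one cannot simply take an infimum; one must instead use the right-translation-invariant combinatorics of the $[V]$'s (the relation $r_t([V]) \wedge [V] \neq 0 \iff t \in V^{-1}V$, together with its $\pi$-image) to show the open sets representing $\pi([V])$ can be chosen to shrink down to a single point of $G$, and that this point is independent of the chosen representatives and of $V$. This is precisely where \autoref{prp:UCVp:charOpen} does its work, and it is the step where the discrete-case shortcut in \autoref{lma:DiscreteCase} (where one just evaluates at $e$) genuinely fails and has to be replaced by a limiting argument over the neighborhood filter.
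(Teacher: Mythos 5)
Your overall architecture (show $\pi$ preserves open classes, extract a candidate point $s$ from the images of small neighborhoods of the unit, then propagate $\pi'=l_{s^{-1}}\circ\pi=\id$ from a neighborhood basis to all of $\frakA_o$ and then to $\frakA$ by regularity) is close in spirit to the paper's, and your second half and the uniqueness argument are essentially sound. But the step you call ``the cleanest way to extract $s$'' rests on a false claim: the property $r_t([V])\wedge[V]\neq 0\iff t\in V^{-1}V$ does \emph{not} characterize $[V]$ up to left translation among nonzero elements of $\frakA$, nor even among open classes. Concretely, take $G=\RR$, $V=(0,1)$, and $A=(0,\tfrac12)\cup(\tfrac35,1)$. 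Then $A$ is open, $A-A=(-1,1)=V-V$, so $[A+t]\wedge[A]\neq 0$ exactly for $t\in V^{-1}V$; yet $[A]\neq[s+V]$ for any $s$, since $A$ has diameter $1$ but omits an interval of positive length. So nothing forces $\pi([V])$ to be a class $[s_VV]$, and the subsequent ``compatibility of the $s_V$'' has no objects to be compatible. A second, independent gap: you give no reason why $\pi([V])$ should be pre-compact (or even of finite measure). A decreasing net of nonzero open classes in a non-compact group need not localize anywhere (think of $\pi([V])$ all being unbounded), so even a weakened localization argument cannot start without first proving that $\pi$ preserves the (pre-)compact classes. This is exactly what the paper's Claim~2 supplies, via a lattice-theoretic characterization of pre-compactness in terms of the right action, and it is an unavoidable ingredient.

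For comparison, the paper does not try to show $\pi([V])$ is an exact left translate. Instead, for each nonzero compact class $a$ it forms $L_a=\{s: l_s(a)\wedge\pi(a)\neq 0\}$, proves $L_a$ is nonempty and pre-compact (using that $\pi(a)$ is again a compact class), shows that any two $\overline{L_a}$ meet (via $L_{r_t(a)}=L_a$ and the fact that right translates of $a$ cover $\frakA$), so that $L=\bigcap_a\overline{L_a}$ is a nonempty compact set, and then runs a separate separation argument to show $L$ is a singleton $\{s\}$; finally it deduces $\pi(a)\leq l_s(a)$ for compact classes and upgrades to equality on all of $\frakA$ by suprema and complements. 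If you want to repair your proof, you need substitutes for precisely these two missing pieces: a proof that $\pi$ preserves compact (or finite-measure) classes, and a localization mechanism that does not assume $\pi([V])$ is a translate of $V$. One more minor point: in your last step, ``every element of $\frakA$ is determined by the open classes \emph{below} it'' is false (a fat Cantor set has no nonzero open class below it); you should instead go through inner regularity by compact classes together with outer regularity of compact sets by open sets.
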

\begin{proof}
Recall that $\frakA_c$ denotes the equivalence classes of compact sets in $G$.
In the first two claims, we show that $\pi(\frakA_c)\subseteq\frakA_c$.
	
\textbf{Claim~1.}
\emph{We have $\pi(\frakA_o)\subseteq\frakA_o$.}
To prove the claim, let $a\in\frakA_o$.
Let $\mathcal{N}$ denote the family of open neighborhoods of the identity in $G$.
By \autoref{prp:UCVp:charOpen}, we have
\[
a=\bigvee_{V\in\mathcal{N}} \bigwedge_{t\in V}r_t(a).
\]
Using at the second step that $\pi$ is an order-isomorphism, and using at the third step that $\pi$ commutes with each $r_t$, we deduce that
\[
\pi(a)
= \pi\left( \bigvee_{V\in\mathcal{N}} \bigwedge_{t\in V}r_t(a) \right)
= \bigvee_{V\in\mathcal{N}} \bigwedge_{s\in V}\pi(r_t(a))
= \bigvee_{V\in\mathcal{N}} \bigwedge_{s\in V}r_t(\pi(a)).
\]
Applying \autoref{prp:UCVp:charOpen} again, we conclude that $\pi(a)\in\frakA_o$, proving the claim.

\textbf{Claim~2.}
\emph{Let $a\in\frakA$.
Then $a=[M]$ for some pre-compact subset $M\subseteq G$ if and only if there exists a compact subset $K\subseteq G$ and $b\neq 0$ such that
\[a\wedge\bigvee_{t\notin K}r_t(b)=0.\]}
To prove the forward implication, let $a=[M]$ with $M$ pre-compact.
Since $[M]\leq [\overline{M}]$, we may assume that $M$ is compact.
Let $V$ be any pre-compact neighborhood of the unit.
Set $K=\overline{VM}$, which is compact, and set $b=[V^{-1}]$.
Then
\[
\bigvee_{t\notin K}r_t(b)
= \bigvee_{t\notin K}[V^{-1}t]
= [\bigcup_{t\notin K} V^{-1}t]
= [V^{-1}K^c],
\]
and therefore
\[
a\wedge \bigvee_{t\notin K}r_t(b)
= [M]\wedge [V^{-1}K^c] = [M\cap V^{-1}K^c]=[\emptyset]=0.
\]
Conversely, assume that $a\wedge\bigvee_{t\notin K}r_t(b)=0$ for some compact subset $K\subseteq G$ and $b\neq 0$.
Applying \autoref{prp:UCVp:supTranslates} we obtain $B'\in\Sigma$ with $[B']=b$ such that $\bigvee_{t\notin K}r_t(b)=[B'K^c]^*$.
Pick $x\in B'$.
Then
\[
\bigvee_{t\notin K}r_t(b)=[B'K^c]^*\geq[xK^c],
\]
and therefore $a\leq [xK]$. This implies that $a$ has a pre-compact representative, and proves the claim.

It follows from Claim~1 that $\pi$ maps the family of equivalence classes of closed sets to itself.
Together with Claim~2 we deduce that $\pi(\frakA_c)\subseteq\frakA_c$.

We set $\frakA_c^\times=\frakA_c\setminus\{0\}$.
Given $a\in\frakA_c^\times$, we set
\[
L_a = \big\{ s\in G : l_s(a)\wedge\pi(a)\neq 0 \big\}.
\]
Further, we set
\[
L = \bigcap_{a\in\frakA_c^\times} \overline{L_a} \subseteq G.
\]
We are going to verify that $L$ contains exactly one element.

\textbf{Claim~3.}
\emph{Let $a\in\frakA$ with $a\neq 0$.
Then $\bigvee_{s\in G}l_s(a)=\bigvee_{s\in G}r_s(a)=1$.}
To prove the claim, apply \autoref{prp:UCVp:supTranslates} to obtain $M'\in\Sigma$ with $[M']=a$ such that
\[
\bigvee_{s\in G}l_s(a)=[GM']^*, \andSep
\bigvee_{s\in G}r_s(a)=[M'G]^*.
\]
Since $a\neq 0$, we have $M'\neq\emptyset$ and therefore $[GM']^*=[G]^*=1$ and $[MG']^*=[G]^*=1$.

\textbf{Claim~4.}
\emph{Let $a\in\frakA_c^\times$.
Then $L_a$ is nonempty and pre-compact.}
If $L_a$ were empty, then $l_s(a)\wedge\pi(a)=0$ for every $s\in G$.
Using claim~3 at the third step, we obtain
\[
0
= \bigvee_{s\in G} (l_s(a)\wedge\pi(a))
= \left(\bigvee_{s\in G} l_s(a)\right)\wedge\pi(a)
= \pi(a),
\]
which is a contradiction.
Thus, $L_a\neq\emptyset$.

To show that $L_a$ is pre-compact, choose compact subsets $K,C\subseteq G$ with $a=[K]$ and $\pi(a)=[C]$.
If $s\in L_a$, then $sK\cap C\neq\emptyset$, and therefore $s\in CK^{-1}$.
Hence $L_a\subseteq CK^{-1}$, which shows that $L_a$ is pre-compact.

\textbf{Claim~5.}
\emph{Let $a,b\in\frakA_c^\times$.
Then $L_a\cap L_b$ is nonempty.}
To prove the claim, let $s,t\in G$.
Then
\[
r_t\big( l_s(a)\wedge\pi(a) \big)
= r_t(l_s(a))\wedge r_t(\pi(a))
= l_s(r_t(a))\wedge \pi(r_t(a)),
\]
which implies that $L_{r_t(a)}=L_a$.
By Claim~3, we have $\bigvee_{t\in G}r_t(a)=1$.
It follows that there exists $t\in G$ such that the element $c=r_t(a)\cap b$ is nonzero.
Then $L_c\subseteq L_{r_t(a)}=L_a$, $L_c\subseteq L_b$, and $L_c\neq\emptyset$, which proves the claim.

It follows from Claims~4 and~5 that $\{\overline{L_a}:a\in\frakA_c^\times\}$ is a nested family of compact subsets of $G$, which implies that its intersection $L$ is nonempty.

\textbf{Claim~6.}
\emph{The set $L$ contains exactly one element.}
To prove the claim, assume that $s,t\in L$ with $s\neq t$.
Choose a neighborhood $V$ of the unit in $G$ such that $sV\cap tV=\emptyset$.
Choose subsets $W,K\subseteq G$ with $W$ open and $K$ compact such that $WK\subseteq V$ and the 
element $a=[K]$ is nonzero.
We have $s\in\overline{L_a}$, and therefore $sW\cap L_a\neq\emptyset$.
Choose $w\in W$ with $sw\in L_a$.
It follows that the element
\[
b=l_{sw}(a)\cap\pi(a).
\]
is nonzero. Then $\pi^{-1}(b)\neq 0$.
Using that the Haar measure is inner regular, choose $c\in\frakA_c^\times$ with $c\leq\pi^{-1}(b)$.
Then
\[
\pi(c)\leq b \leq l_{sw}(a)=[swK]\leq[sV].
\]
We have $t\in L_c$, and therefore $tW\cap L_c\neq\emptyset$.
Choose $w'\in W$ with $tw\in L_c$.
We have
\[
l_{tw'}(c) \leq l_{tw'}(a)=[tw'K]\leq[tV].
\]
Since $\pi(c)\leq[sV]$, we obtain
\[
l_{tw'}(c)\wedge\pi(c)
\leq [tV]\wedge[sV]=0,
\]
which contradicts that $sw'\in L_c$.

Thus, $L$ contains exactly one element.
Let $s\in G$ such that $L=\{s\}$.
Next, we show that $\pi=l_s$.

Let $a\in\frakA_c^\times$.
Recall that $\mathcal{N}$ denotes the family of open neighborhoods of the identity in $G$.
Let $V\in\mathcal{N}$.
We first show that $\pi(a)\leq \bigvee_{t\in Vs}l_t(a)$.
Assume otherwise.
Set $b=\pi(a)\setminus \bigvee_{t\in Vs}l_t(a)$. Then $b\neq 0$, which allows us to choose $c\in\frakA_c^\times$ with $c\leq a$ such that $\pi(c)\leq b$.
It follows that $Vs\cap L_c=\emptyset$, a contradiction.

Thus, $\pi(a)\leq \bigvee_{t\in Vs}l_t(a)$ for every  $V\in\mathcal{N}$.
Applying \autoref{prp:UCVp:charOpen} for $l_s(a)$ at the second step, we obtain
\[
\pi(a)
\leq \bigwedge_{v\in\mathcal{N}}\bigvee_{t\in V}l_t(l_s(a))
= l_s(a).
\]
It follows that $\pi(a)\leq l_s(a)$ for every $a\in\frakA_c$.

For general $a\in\frakA$, we have $a=\bigvee\{c\in\frakA_c : c\leq a\}$, and therefore
\begin{align*}
\pi(a)
&= \pi\left( \bigvee\{c\in\frakA_c \colon c\leq a\} \right)
= \bigvee\{\pi(c)\colon c\in\frakA_c, c\leq a\} \\
&\leq \bigvee\{l_s(c)\colon c\in\frakA_c, c\leq a\}
= l_s\left( \bigvee\{c\in\frakA_c \colon c\leq a\} \right)
= l_s(a).
\end{align*}
We also have $\pi(a^c)\leq l_s(a^c)$ and hence
\[
\pi(a)=\pi(a^c)^c \geq l_s(a^c)^c = l_s(a).
\]
Finally, we deduce that $\pi(a)=l_s(a)$ for every $a\in\frakA$.
It easily follows that $s\in G$ with this property is unique.
\end{proof}

%==========================================================================================
Given a unital Banach algebra $A$, we let $\Unitary(A)$ denote the group of invertible isometries in $A$.

The following result has been proved in \cite[Theorem~1]{Par68IsoMultiplier}, relying
on Lamperti's theorem from \cite{Lam58IsoLp}.
However, as explained at the beginning of \autoref{sec:Lamperti}, Lamperti's formulation is not entirely
correct (and applies only to $\sigma$-finite spaces). For these reasons, we must provide a rigorous proof
of \autoref{prp:UCVp:identifyUCVp}, for arbitrary locally compact spaces, relying instead on \autoref{prp:Lamperti:MainResult}.

%==========================================================================================
\begin{lma}
\label{prp:UCVp:identifyUCVp}
Let $G$ be a locally compact group, and let $p\in (1,\infty]$ with $p\neq 2$.
Let $u\in\mathcal{U}(\CV_p(G))$.
Then there exist unique $\gamma\in\TT$ and $t\in G$ such that $u=\gamma \lambda_p(t)$.

Further, given $\beta,\gamma\in\TT$ and $s,t\in G$ we have
\begin{align}
\label{prp:UCVp:identifyUCVp:eqNorm}
\| \beta \lambda_p(s) - \gamma \lambda_p(t) \| = \max\{|\beta-\gamma|,2\delta_{s,t}\}.
\end{align}
%Further, the maps
%\[
%\Delta\colon\TT\times G\to\Unitary(\CV_p(G)),\andSep
%\Delta'\colon G\to \pi_0(\Unitary(\CV_p(G))),
%\]
%given by $\Delta(\gamma,s)=\gamma u_s$, and by  $\Delta'(s)=[u_s]$, are group isomorphisms.
%Hence, there is a natural group isomorphism
%\[
%\TT\times G \cong \mathcal{U}(\CV_p(G)),
%\]
%given by sending the pair $(\gamma,s)\in\TT\times G$ to $\gamma\cdot u_s$.
%Consequently, the map $$, given by, is a group isomorphism.
\end{lma}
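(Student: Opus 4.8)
The plan is to derive this directly from the Banach--Lamperti Theorem (\autoref{prp:Lamperti:MainResult}) together with the identification of Boolean automorphisms commuting with right translations as left translations carried out in \autoref{prp:UCVp:charLeftTrans}; since the substantive work is already contained in those two results, what remains is a translation into the language of the measure algebra $(\frakA,\mu)$ of $G$ plus a short ergodicity argument. First I would note that any $u\in\Unitary(\CV_p(G))$ is, in particular, a surjective linear isometry of $L^p(\mu)$, so \autoref{prp:Lamperti:MainResult} supplies a unique $\varphi\in\Aut(\frakA)$ and a unique $f\in\Unitary(L^\infty(\frakA))$ with $u=m_f u_\varphi$. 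I then record how the regular representations sit inside this parametrization: left translation preserves the left Haar measure, so its Radon--Nikodym derivative is $1$ and checking on characteristic functions gives $\lambda_p(s)=u_{l_s}$ exactly; right translation scales the Haar measure by the modular function, so $\rho_p(s)$ and $u_{r_{s^{-1}}}$ agree on characteristic functions of finite-measure sets up to a fixed positive scalar, and hence (both being bounded operators, which are determined by those values) $\rho_p(s)$ is an invertible scalar multiple of $u_{r_{s^{-1}}}$. Consequently $u\in\CV_p(G)$ if and only if $u$ commutes with $u_{r_s}$ for every $s\in G$.

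Next I would substitute $u=m_f u_\varphi$ into the relation $u\,u_{r_s}=u_{r_s}\,u$. Using $u_\varphi u_{r_s}=u_{\varphi\circ r_s}$ from \autoref{prp:Lamperti:IsomLpu_pi} on the left-hand side and $u_{r_s}m_f=m_{r_s\circ f}u_{r_s}$ from \autoref{prp:Lamperti:IsomLpInterplay} on the right-hand side, the relation becomes
\[
m_f\,u_{\varphi\circ r_s}=m_{r_s\circ f}\,u_{r_s\circ\varphi},
\]
so the uniqueness part of \autoref{prp:Lamperti:MainResult} forces $\varphi\circ r_s=r_s\circ\varphi$ and $f=r_s\circ f$ for every $s\in G$. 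The first condition is precisely the hypothesis of \autoref{prp:UCVp:charLeftTrans}, which yields a unique $t\in G$ with $\varphi=l_t$, hence $u_\varphi=\lambda_p(t)$. For the second condition, $f=r_s\circ f$ for all $s$ means that every level set $\level{f\in E}$, for $E\subseteq\TT$ Borel, is a right-invariant element of $\frakA$; by \autoref{prp:UCVp:supTranslates} a nonzero right-invariant element equals $1$, so each such level set is $0$ or $1$, and the standard argument (for each $n$, the unique cell of a finite Borel partition of $\TT$ into sets of diameter $<1/n$ carrying $f$) shows $f$ is a.e.\ equal to a constant $\gamma\in\TT$, i.e.\ $m_f=\gamma\cdot\id$. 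Therefore $u=\gamma\lambda_p(t)$. Uniqueness of $\gamma$ and $t$ then follows from the uniqueness in \autoref{prp:Lamperti:MainResult} (writing $\gamma\lambda_p(t)=m_{\gamma\charFct}u_{l_t}$, with $\charFct$ the constant function $1$, determines both $\gamma$ and $l_t$) together with the injectivity of $s\mapsto l_s$, which is immediate since $l_s=l_t$ forces $l_{t^{-1}s}=\id$ and hence $t^{-1}s=e$ after testing against a small open set.

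Finally, for the norm formula I would write $\beta\lambda_p(s)=m_{\beta\charFct}u_{l_s}$ and $\gamma\lambda_p(t)=m_{\gamma\charFct}u_{l_t}$ and apply \eqref{prp:Lamperti:MainResult:eqNorm}, obtaining $\|\beta\lambda_p(s)-\gamma\lambda_p(t)\|=\max\{\|(\beta-\gamma)\charFct\|_\infty,2\delta_{l_s,l_t}\}=\max\{|\beta-\gamma|,2\delta_{s,t}\}$, using once more that $l_s=l_t$ iff $s=t$. The case $p=\infty$ is handled by the identical argument, with \autoref{prp:Lamperti:MainResult} replaced by the Banach--Stone description of the surjective linear isometries of $L^\infty$ of a localizable measure algebra as maps $m_f u_\varphi$ (now $\rho_\infty(s)=u_{r_{s^{-1}}}$ and $\lambda_\infty(s)=u_{l_s}$ without any scalar correction). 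I expect the only genuine subtlety here to be the ergodicity step $f=r_s\circ f\ \forall s\Rightarrow f$ constant, together with correctly locating $\lambda_p$ and $\rho_p$ in the Banach--Lamperti parametrization; the truly hard point, namely recovering a group element from a Boolean automorphism commuting with all right translations, has already been settled in \autoref{prp:UCVp:charLeftTrans}.
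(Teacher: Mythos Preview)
Your proof is correct and follows essentially the same route as the paper's: apply \autoref{prp:Lamperti:MainResult} to write $u=m_f u_\varphi$, use the commutation with right translations together with the uniqueness in the Banach--Lamperti decomposition to force $\varphi\circ r_s=r_s\circ\varphi$ and $r_s\circ f=f$, invoke \autoref{prp:UCVp:charLeftTrans} for the first to get $\varphi=l_t$, use ergodicity of the right action for the second to get $f$ constant, and read off the norm formula from \eqref{prp:Lamperti:MainResult:eqNorm}. The paper is slightly more terse (it simply asserts that $r_s\circ h=h$ for all $s$ implies $h$ constant, and it does not spell out the modular-function scalar relating $\rho_p(s)$ to $u_{r_{s^{-1}}}$), but the logic is identical.

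One remark: the occurrence of $p=\infty$ in the statement appears to be a typo for $p\in(1,\infty)$; the paper defines $\CV_p(G)$ only for $p\in[1,\infty)$, \autoref{prp:Lamperti:MainResult} is stated for $p\in(1,\infty)$, and no $L^\infty$ case is used anywhere downstream. Your Banach--Stone paragraph is therefore addressing a case that is not really intended, and you need not include it.
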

\begin{proof}
Applying \autoref{prp:Lamperti:MainResult} to $u\in \Isom(L^p(\mu))$, we obtain a unique 
$\varphi\in\Aut(\frakA)$ and a unique $h\in\mathcal{U}(L^\infty(\frakA))$ such that $u=m_h u_\varphi$.

Let $s\in G$.
Recall that right multiplication by $s$ induces a Boolean automorphism $r_s\in\Aut(\frakA)$.
Note that $u_{r_s}=\lambda_p(s)\in \mathcal{U}(L^p(\frakA))$.
Since $\CV_p(G)$ is the commutator of the right regular representation, we obtain that
$u_{r_s} m_h u_\varphi = m_h u_\varphi u_{r_s}$.
Applying Lemmas~\ref{prp:Lamperti:IsomLpu_pi} and~\ref{prp:Lamperti:IsomLpInterplay}, we deduce that
\[
m_{r_s\circ h} u_{r_s\circ\varphi} = m_h u_{\varphi\circ r_r}.
\]
It follows that $r_s\circ h=h$ and $r_s\circ\varphi=\varphi\circ r_s$ for every $s\in G$.

Applying \autoref{prp:UCVp:charLeftTrans}, we obtain a unique $t\in G$ such that $\varphi=l_t$.
It is straightforward to verify that the identity $r_s\circ h=h$, valid for all $s\in G$, implies that $h$ is constant.
With $\gamma$ denoting the unique value of $h$, we have
$u=\gamma u_{l_t}=\gamma \lambda_p(t)$, as desired.
Finally, \eqref{prp:UCVp:identifyUCVp:eqNorm} follows from \eqref{prp:Lamperti:MainResult:eqNorm}.
\end{proof}

%==========================================================================================
\begin{pgr}
\label{pgr:topologiesBddE}
Let $E$ be a reflexive Banach space.
We identify $\Bdd(E)$ with the dual of $E\tensMax E^*$.
Let us recall the definitions of the strong operator topology (SOT), the weak operator topology (WOT), and the \weakStar-topology (also called ultraweak topology) on $\Bdd(E)$.
Given a net $(a_j)_j$ in $\Bdd(E)$ and $a\in\Bdd(E)$, we have $a_j\xrightarrow{SOT}a$ if and only if $a_j\xi\xrightarrow{\|\cdot\|}a\xi$ for every $\xi\in E$.
Moreover, we have $a_j\xrightarrow{WOT}a$ if and only if $a_j\xi\xrightarrow{w}a\xi$ for every $\xi\in E$.
Since $E$ is reflexive, this is also equivalent to $a_j\xi\xrightarrow{w^*}a\xi$ for every $\xi\in E$.
Given $\xi\in E$ and $\eta\in E^*$, we consider the simple tensor $\xi\otimes\eta\in E\tensMax E^*$.
Under the duality of $\Bdd(E)$ with $E\tensMax E^*$, we have $\langle b,\xi\otimes\eta\rangle = \langle b(\xi),\eta\rangle$ for $b\in\Bdd(E)$.
It follows that $a_j\xrightarrow{WOT}a$ if and only if
\[
\langle a_j, \xi\otimes\eta\rangle \to \langle a, \xi\otimes\eta\rangle,
\]
for every $\xi\in E$ and $\eta\in E^*$.
Finally, we have $a_j\xrightarrow{w^*}a$ if and only if $\langle a_j,\omega\rangle \to \langle a,\omega\rangle$ for every $\omega\in E\tensMax E^*$.

It is easy to see that SOT-convergence implies WOT-convergence, and that \weakStar-convergence implies WOT-convergence.
In general, none of these implications can be reversed.
Further, SOT-convergence does not imply \weakStar-convergence, or vice versa.

However, it is well-known that the \weakStar-topology and the WOT agree on bounded sets of $\Bdd(E)$.
Further, the WOT and SOT agree on bounded subgroups of $\Gl(E)$;
see \cite[Theorem~2.8]{Meg01OpTopReflRepr}.
In particular, on $\Isom(E)$, the WOT, the SOT, and the \weakStar-topology agree.	
\end{pgr}

%==========================================================================================
\begin{ntn}
\label{ntn:UCVp:pi0}
Let $A$ be a unital Banach algebra.
We let $\Unitary(A)$ denote the group of invertible isometries in $A$.
Further, $\Unitary(A)_0$ denotes the connected component of $\Unitary(A)$ in the norm topology that contains the unit of $A$.
Then $\Unitary(A)_0$ is a normal subgroup of $\Unitary(A)$ and we set
\[
\pi_0(\Unitary(A)) = \Unitary(A) / \Unitary(A)_0.
\]
If $A$ has a \weakStar-topology, then we write $\Unitary(A)_{w^*}$ for the group $\Unitary(A)$ equipped with the restriction of the \weakStar-topology.
We write $\pi_0(\Unitary(A))_{w^*}$ for the group $\pi_0(\Unitary(A))$ equipped with the quotient topology induced by $\Unitary(A)_{w^*}\to \pi_0(\Unitary(A))$.
\end{ntn}

%==========================================================================================
\begin{rmk}
Let $A$ be a unital Banach algebra with a predual.
In general, multiplication in $A$ may not be continuous for the \weakStar-topology, and hence $\Unitary(A)_{w^*}$ may not be a topological group.

By definition, $A$ is a \emph{dual Banach algebra} if its predual makes multiplication separately \weakStar-continuous.
This notion was introduced by Runde in~\cite{Run02BookAmen} and has been extensively studied by Daws in~\cite{Daw07DualBAlgReprInj,Daw11BicommDualBAlg}.
If $A$ is a unital dual Banach algebra, then multiplication in $\Unitary(A)_{w^*}$ is separately continuous.
Using Daws' theorem of reflexive representability (\cite[Corollay~3.8]{Daw07DualBAlgReprInj}),
and using the observations from \autoref{pgr:topologiesBddE}, we will show in \autoref{prp:UnitGpTopGp} that $\Unitary(A)_{w^*}$ is a topological group.
\end{rmk}

%==========================================================================================

%the group of connected components of $\Unitary(A)$ for the norm topology.
%Given $u$ in $\Unitary(A)$ we denote by $[u]$ its image in $\pi_0(\Unitary(A))$.

%==========================================================================================

The following is arguably the main result of this section, and 
it shows how one can recover $G$ (as a topological group) 
from $\CV_p(G)$ using the \weakStar-topology.

\begin{thm}
\label{prp:UCVp:TopUCVp}
Let $G$ be a locally compact group, and let $p\in(1,\infty)$ with $p\neq 2$.
%Then the map $\Delta\colon \TT\times G\to \Unitary(\CV_p(G))_{w^*}$, given by $\Delta(\gamma,s)=\gamma u_s$, is an isomorphism of topological groups.
Then the maps
\[
\Delta\colon\TT\times G\to\Unitary(\CV_p(G))_{w^*},\andSep
\Delta'\colon G\to \pi_0(\Unitary(\CV_p(G)))_{w^*},
\]
given by $\Delta(\gamma,s)=\gamma \lambda_p(s)$, and by $\Delta'(s)=[\lambda_p(s)]$, are isomorphisms of topological groups.
\end{thm}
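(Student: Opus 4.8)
The plan is to combine the algebraic identification from \autoref{prp:UCVp:identifyUCVp} with the topological observations about isometry groups on reflexive spaces from \autoref{pgr:topologiesBddE}. By \autoref{prp:UCVp:identifyUCVp}, the map $\Delta\colon\TT\times G\to\Unitary(\CV_p(G))$, $\Delta(\gamma,s)=\gamma\lambda_p(s)$, is a bijective group homomorphism (the homomorphism property is immediate, surjectivity and injectivity are exactly the content of that lemma together with the norm formula \eqref{prp:UCVp:identifyUCVp:eqNorm}, which in particular shows $\gamma\lambda_p(s)=1$ forces $\gamma=1$, $s=e$). So the entire issue is to promote this algebraic isomorphism to a homeomorphism when the target carries the \weakStar-topology. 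Since $L^p(G)$ is reflexive for $p\in(1,\infty)$, \autoref{pgr:topologiesBddE} tells us that on $\Isom(L^p(G))$ — and hence on the subgroup $\Unitary(\CV_p(G))$ — the \weakStar-topology, the WOT, and the SOT all coincide; in particular $\Unitary(\CV_p(G))_{w^*}$ is a topological group.

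The key step is therefore to show that $\Delta$ is a homeomorphism onto its image for the SOT (equivalently WOT, equivalently \weakStar). First I would check continuity of $\Delta$: given $(\gamma_j,s_j)\to(\gamma,s)$ in $\TT\times G$, one has $\gamma_j\lambda_p(s_j)\xrightarrow{SOT}\gamma\lambda_p(s)$ because strong continuity of the left regular representation $\lambda_p\colon G\to\Isom(L^p(G))$ is standard (translation is strongly continuous on $L^p(G)$ for $p<\infty$), and scalar multiplication is obviously jointly continuous. For the reverse direction, suppose $\gamma_j\lambda_p(s_j)\xrightarrow{SOT}\gamma\lambda_p(s)$; after multiplying by $\overline{\gamma}\lambda_p(s^{-1})$ (which is SOT--SOT continuous on the bounded group $\Unitary(\CV_p(G))$) we may assume the limit is the identity, i.e. $\gamma_j\lambda_p(s_j)\xrightarrow{SOT}1$. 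Evaluating on a fixed $0\neq\xi\in L^p(G)$ gives $\|\gamma_j\lambda_p(s_j)\xi-\xi\|_p\to 0$. Taking $\xi=\charFct_K$ for a compact neighbourhood $K$ of the unit with $\mu(K)<\infty$, disjointness of supports forces: if $s_j$ does not eventually enter every neighbourhood of $e$, then along a subnet $s_jK\cap K$ has measure bounded away from $\mu(K)$, and a Clarkson-type lower bound (exactly as in the proof of \eqref{prp:UCVp:identifyUCVp:eqNorm} in \autoref{prp:UCVp:identifyUCVp}, or directly $\|\gamma_j\charFct_{s_jK}-\charFct_K\|_p^p\geq \mu(K\setminus s_jK)$) keeps $\|\gamma_j\lambda_p(s_j)\xi-\xi\|_p$ bounded below, a contradiction. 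Hence $s_j\to e$ in $G$, and then strong continuity of $\lambda_p$ gives $\lambda_p(s_j)\xi\to\xi$, so $\gamma_j\xi\to\xi$, forcing $\gamma_j\to 1$ in $\TT$. Thus $\Delta$ is a homeomorphism onto $\Unitary(\CV_p(G))_{w^*}$.

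For the second map, $\Delta'\colon G\to\pi_0(\Unitary(\CV_p(G)))_{w^*}$, $\Delta'(s)=[\lambda_p(s)]$, note that under the identification $\Delta$ the norm-connected component of the identity in $\Unitary(\CV_p(G))$ corresponds, via \eqref{prp:UCVp:identifyUCVp:eqNorm}, to exactly $\TT\times\{e\}$: the norm formula $\|\beta\lambda_p(s)-\gamma\lambda_p(t)\|=\max\{|\beta-\gamma|,2\delta_{s,t}\}$ shows that two such isometries lie in the same norm component iff $s=t$ (for $s\neq t$ the distance is $\geq 2$, while $\TT\times\{e\}$ is norm-connected as a continuous image of the connected $\TT$). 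Hence $\Unitary(\CV_p(G))_0=\Delta(\TT\times\{e\})$, and the quotient $\pi_0(\Unitary(\CV_p(G)))$ is algebraically $(\TT\times G)/(\TT\times\{e\})\cong G$, with $\Delta'$ the induced map. Since $\Delta$ is a homeomorphism and the quotient topology on $\pi_0$ is by definition the one induced from $\Unitary(\CV_p(G))_{w^*}$, the map $\Delta'$ is a homeomorphism; one checks directly that the quotient of the product topology on $\TT\times G$ by the closed subgroup $\TT\times\{e\}$ is the topology of $G$. The main obstacle is the reverse-continuity argument in the second paragraph — extracting convergence $s_j\to e$ in $G$ from SOT-convergence to the identity — and the right tool there is precisely the disjointness/Clarkson estimate already established in \autoref{prp:UCVp:identifyUCVp}, together with the reflexivity fact from \autoref{pgr:topologiesBddE} that makes WOT, SOT and \weakStar{} interchangeable on $\Unitary(\CV_p(G))$.
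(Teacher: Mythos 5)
Your proposal is correct and follows essentially the same route as the paper: reduce to showing $\Delta$ is a homeomorphism (using that the quotient maps identify $\pi_0$ with $G$ via the norm formula \eqref{prp:UCVp:identifyUCVp:eqNorm}), get continuity from strong continuity of translation on $L^p(G)$, and get continuity of the inverse from a disjoint-support lower bound on $\|\gamma_j\charFct_{s_jV}-\gamma\charFct_{sV}\|_p$ for a small compact neighbourhood $V$ of the unit, with \autoref{pgr:topologiesBddE} supplying the coincidence of the \weakStar{}, WOT and SOT topologies on $\Isom(L^p(G))$. The only cosmetic caveat is that in your reverse-direction argument the compact neighbourhood $K$ must be chosen \emph{after} the offending neighbourhood $U$ (e.g.\ with $KK^{-1}\subseteq U$, as the paper does with $V$) so that $s_jK\cap K=\emptyset$; with that order of quantifiers your estimate closes the argument exactly as in the paper.
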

\begin{proof}
%Let $q$ be the dual H\"older exponent of $p$.
%We identify $\Bdd(L^p(G))$ with the dual of $L^p(G)\tensMax L^q(G)$.
%It follows from \autoref{prp:UCVp:identifyUCVp} that $\Delta$ and $\Delta'$ are group isomorphisms.
We have the following diagram:
\[
\xymatrix@R-5pt{
\TT \times G \ar[r]^-{\Delta} \ar[d]_{(\gamma,s)\mapsto s}
& \Unitary(\CV_p(G))_{w^*} \ar[d]^{u\mapsto[u]} \\
G \ar[r]^-{\Delta'}
& { \pi_0(\Unitary(\CV_p(G)))_{w^*} }.
}
\]
It follows from \autoref{prp:UCVp:identifyUCVp} that $\Delta$ and $\Delta'$ are bijective group homomorphisms.
The downwards maps are topological quotient maps.
Therefore, it is enough to verify that $\Delta$ is a homeomorphism.
%, which implies that the lower horizontal map is a homeomorphism, and thus an isomorphism of topological groups.

To verify this, let $(\gamma_j,s_j)_j$ be a net in $\TT\times G$, and let $(\gamma,s)\in\TT\times G$.
The \weakStar-topology on $\CV_p(G)$ is the restriction of the \weakStar-topology on $\Bdd(L^p(G))$.
Further, as noted in \autoref{pgr:topologiesBddE}, on $\Isom(L^p(G))$ the \weakStar-topology agrees with the strong operator topology (SOT).
Thus, we need to show that $(\gamma_j,s_j)\to(\gamma,s)$ in $\TT\times G$ if and only if $\gamma_j \lambda_p(s_j)\xrightarrow{SOT}\gamma \lambda_p(s)$.

%To show the forward implication,
Assume that $(\gamma_j,s_j)\to(\gamma,s)$, and let $\xi\in L^p(G)$.
Then
\[
\gamma_j \lambda_p(s_j)(\xi)
= \gamma_j (\delta_{s_j}\ast\xi)
\xrightarrow{\|\cdot\|} \gamma (\delta_s\ast\xi)
= \gamma \lambda_p(s)(\xi),
\]
in $L^p(G)$.
This shows that $\gamma_j \lambda_p(s_j)\xrightarrow{SOT}\gamma \lambda_p(s)$.

Conversely, assume that $\gamma_j \lambda_p(s_j)\xrightarrow{SOT}\gamma \lambda_p(s)$.
To show that $s_j\to s$ in $G$, let $U$ be a neighborhood of the unit in $G$.
Choose a neighbourhood $V$ of the unit with $VV^{-1}\subseteq U$ and $\mu(V)<\infty$.
Then the characteristic function $\charFct_V$ belongs to $L^p(G)$.
We denote the left Haar measure with $\mu$.
If $s_j\notin sU$, then $s_jV\cap sV=\emptyset$, which we use to obtain
\begin{align*}
\| \gamma_j\lambda_p(s_j)(\charFct_V) - \gamma \lambda_p(s)(\charFct_V) \| &= \| \gamma_j \charFct_{s_jV} - \gamma \charFct_{sV} \|\\
&\geq \mu(s_jV \vartriangle sV)^{1/p}\\
&= (\mu(s_jV) + \mu(sV))^{1/p}.
\end{align*}
Since $\gamma_j \lambda_p(s_j)(\charFct_V) \xrightarrow{\|\cdot\|} \gamma \lambda_p(s)(\charFct_V)$, we deduce 
that the net $(s_j)_j$ eventually belongs to $sU$.
It follows that $s_j\to s$.
Similarly, one obtains that $\gamma_j\to\gamma$, as desired.
%Let $\Delta\colon\TT\times G\to\Unitary(\CV_p(G))$ be given by $\Delta(\gamma,s)=\gamma \lambda_p(s)$, which is an isomorphism of topological groups by \autoref{prp:UCVp:TopUCVp}.
\end{proof}

%==========================================================================================
%the inclusion $\PM_p\lambda(G)\subseteq\CV_p(G)$ induces an an isomorphism of the topological groups of invertible isometries  $\PM_p\lambda(G)=\CV_p(G)$
%Let $A$ be a closed, unital subalgebra of $\B(\ell^p(G))$ such that $\PF_p(G)\subseteq A\subseteq\CV_p(G)$.

%==========================================================================================

For some applications of our results, particularly those in the next section, it will
be necessary to also compute the unitary group of some intermediate algebras between 
$\PF_p(G)$ and $\CV_p(G)$. In the next proposition, we assume that the unit ball of the
intermediate algebra is closed in the \weakStar-topology of $\Bdd(L^p(G))$, which is 
automatic whenever the algebra itself is \weakStar-closed, but the extra flexibility
will be necessary in the proof of \autoref{prp:ReflMain}.

\begin{prp}\label{cor:IntermediateSubalg}
Let $G$ be a locally compact group, and let $p\in(1,\infty)$ with $p\neq 2$.
Let $A$ be a Banach subalgebra of $\Bdd(L^p(G))$
such that $\PF_p(G)\subseteq A\subseteq\CV_p(G)$. Assume moreover that 
$A$ has a predual for which the inclusion $A\hookrightarrow \Bdd(L^p(G))$ is 
\weakStar-continuous. Then we have natural isomorphisms of topological groups
\[
\TT\times G \xrightarrow{\cong} \Unitary(A)_{w^*},\andSep
G \xrightarrow{\cong} \pi_0(\Unitary(A))_{w^*},
\]
given by $(\gamma,s)\mapsto\gamma \lambda_p(s)$, and by $s\mapsto [\lambda_p(s)]$, respectively.
\end{prp}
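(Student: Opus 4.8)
The plan is to reduce the statement to the already-established \autoref{prp:UCVp:TopUCVp} by exploiting the two key structural features of $A$: first, that $A$ sits between $\PF_p(G)$ and $\CV_p(G)$, so its group of invertible isometries is squeezed between those of the two extreme algebras; and second, that the unit ball of $A$ is \weakStar-closed in $\Bdd(L^p(G))$, so membership in $A$ is detectable at the level of the ambient algebra. First I would identify $\Unitary(A)$ as a set. If $u\in\Unitary(A)$, then $u$ and $u^{-1}$ lie in the closed unit ball of $A$ and are invertible isometries of $\Bdd(L^p(G))$; since $A\subseteq\CV_p(G)$, we have $u\in\Unitary(\CV_p(G))$, so by \autoref{prp:UCVp:identifyUCVp} there are unique $\gamma\in\TT$ and $t\in G$ with $u=\gamma\lambda_p(t)$. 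Conversely, every $\gamma\lambda_p(t)$ lies in $\PF_p(G)$—this is the standard fact that left translations are multipliers, or simply that $\lambda_p(t)$ is a \weakStar-limit of elements $\lambda_p(f_j)$ with $f_j$ an approximate identity supported near $t$, combined with the hypothesis that the unit ball of $A$ is \weakStar-closed—hence $\gamma\lambda_p(t)\in\Unitary(A)$. Thus the map $\Delta_A\colon\TT\times G\to\Unitary(A)$, $\Delta_A(\gamma,s)=\gamma\lambda_p(s)$, is a bijective group homomorphism, exactly as in \autoref{prp:UCVp:TopUCVp}.

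Next I would transfer the topological statement. Equip $A$ with the \weakStar-topology coming from its predual. By hypothesis the inclusion $A\hookrightarrow\Bdd(L^p(G))$ is \weakStar-continuous, so the \weakStar-topology of $A$ restricted to $\Unitary(A)$ is \emph{finer than or equal to} the restriction of the \weakStar-topology of $\Bdd(L^p(G))$; but $\Unitary(A)$ is a bounded subset of $\Gl(L^p(G))$, and by \autoref{pgr:topologiesBddE} on bounded subgroups of $\Gl(L^p(G))$ the \weakStar-topology, the WOT, and the SOT all coincide. Since $A$ is reflexive? No—one does not need reflexivity of $A$; one only needs that the two topologies in question, namely the ambient \weakStar-topology and the SOT, already agree on $\Isom(L^p(G))$, and that the predual topology of $A$ is sandwiched between them. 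Concretely: \weakStar-convergence in $A$ implies \weakStar-convergence in $\Bdd(L^p(G))$ (by continuity of the inclusion), which on the bounded set $\Unitary(A)$ implies SOT-convergence. Conversely SOT-convergence of a net in $\Unitary(A)$ gives, on the bounded set, WOT- and hence \weakStar-convergence in $\Bdd(L^p(G))$; and since the limit lies in $A$ and the net is bounded, a Hahn–Banach/polar argument shows the net converges \weakStar\ in $A$ as well—here one uses that the weak topology $\sigma(A,A_*)$ on bounded sets is determined by $A_*\subseteq \big(E\tensMax E^*\big)$ seen inside the predual of $\Bdd(L^p(G))$. Therefore the \weakStar-topology of $\Unitary(A)$ coincides with the SOT, which is precisely the topology used for $\Unitary(\CV_p(G))_{w^*}$ in \autoref{prp:UCVp:TopUCVp}. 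Consequently $\Delta_A$ is the same map as $\Delta$ there, just with a possibly different (but, as just argued, identical) topology on the target, so it is a homeomorphism; this is a topological group isomorphism.

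Finally, passing to connected components is formal: $\Unitary(A)_0$ is a normal subgroup, $\pi_0(\Unitary(A))$ carries the quotient topology, and the commutative square
\[
\xymatrix@R-5pt{
\TT\times G \ar[r]^-{\Delta_A} \ar[d]_{(\gamma,s)\mapsto s} & \Unitary(A)_{w^*} \ar[d]^{u\mapsto[u]} \\
G \ar[r]^-{\Delta_A'} & \pi_0(\Unitary(A))_{w^*}
}
\]
has topological quotient maps for its vertical arrows and a homeomorphism on top, so $\Delta_A'(s)=[\lambda_p(s)]$ is a homeomorphism and an isomorphism of topological groups.

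\textbf{Main obstacle.} The delicate point is the equivalence of the \weakStar-topology of $A$ with the ambient SOT on $\Unitary(A)$—specifically the reverse implication, showing that a net in $\Unitary(A)$ that converges in the strong operator topology of $\Bdd(L^p(G))$ already converges in the $\sigma(A,A_*)$ topology. The hypothesis only gives \weakStar-continuity of $A\hookrightarrow\Bdd(L^p(G))$ in one direction, so one must argue that on bounded sets this inclusion is in fact a \weakStar-\emph{homeomorphism} onto its image, which follows because a bounded net converging in the coarser topology together with the fact that its limit lies in $A$ and that the functionals in $A_*$ extend (or restrict from) those defining the ambient topology forces convergence in $\sigma(A,A_*)$; making this rigorous is the only step that requires genuine care, whereas everything else reduces cleanly to \autoref{prp:UCVp:identifyUCVp}, \autoref{prp:UCVp:TopUCVp}, and the topological facts in \autoref{pgr:topologiesBddE}.
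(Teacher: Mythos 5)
Your overall strategy coincides with the paper's: reduce to \autoref{prp:UCVp:TopUCVp} by showing that every $\gamma\lambda_p(t)$ belongs to $A$ (as a \weakStar-limit of the contractions $\lambda_p(t)\lambda_p(f_j)$ with $(f_j)_j$ a contractive approximate identity of $\PF_p(G)$), and then checking that the \weakStar-topology coming from the predual of $A$ agrees on $\Unitary(A)$ with the one inherited from $\Bdd(L^p(G))$. Two corrections are needed. First, your alternative justification that $\lambda_p(t)$ ``lies in $\PF_p(G)$'' is false for non-discrete $G$: the operator $\lambda_p(t)$ is only a (left) multiplier of $\PF_p(G)$, not an element of it. Only your second argument (\weakStar-limit of contractions in $\PF_p(G)$, plus \weakStar-closedness of the unit ball of $A$ in $\Bdd(L^p(G))$) is valid --- and note that this closedness is not literally a hypothesis of the statement; it has to be derived from the existence of the predual.

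Second, and more importantly, the step you yourself flag as the main obstacle is not correctly resolved in your sketch. The clean argument, which is the one the paper uses, is compactness: by Banach--Alaoglu the closed unit ball $A_{\leq 1}$ is compact in $\sigma(A,A_*)$; the inclusion into $(\Bdd(L^p(G)),w^*)$ is continuous (by hypothesis) and injective; and a continuous injection from a compact space into a Hausdorff space is a homeomorphism onto its image, which is moreover automatically closed. This simultaneously yields the \weakStar-closedness of $A_{\leq 1}$ in $\Bdd(L^p(G))$ (needed to get $\lambda_p(t)\in A$) and the agreement of the two \weakStar-topologies on $\Unitary(A)$. Your proposed justification --- that ``the functionals in $A_*$ extend (or restrict from) those defining the ambient topology'' --- is not something the hypotheses provide: the predual $A_*$ is abstract and need not be obtained from $L^p(G)\tensMax L^{p'}(G)$ by restriction, so the Hahn--Banach/polar argument as described does not get off the ground. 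Once this step is replaced by the compactness argument, the rest of your reduction (the identification of $\Unitary(A)$ via \autoref{prp:UCVp:identifyUCVp}, the comparison with the SOT via \autoref{pgr:topologiesBddE}, and the formal passage to $\pi_0$) goes through exactly as in the paper.
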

\begin{proof}
Denote by $(A_{\leq 1},w^*)$ the closed unit ball of $A$ endowed with the \weakStar-topology.
We claim that the canonical map 
$(A_{\leq 1},w^*)\to (\Bdd(L^p(G)),w^*)$ is a homeomorphism onto
its image. Continuity follows by assumption. Moreover, since $(A_{\leq 1},w^*)$ is compact, 
and $(\Bdd(L^p(G)),w^*)$ is Hausdorff, the map is automatically an homoemorphism onto
its image.

Using \autoref{prp:UCVp:TopUCVp}, it suffices to prove that $\mathcal{U}(\CV_p(G))$ is contained in $A$.
Let $(f_j)_j$ be a contractive approximate identity of $\PF_p(G)$ (obtained, for example, from a 
contractive approximate identity of $L^1(G)$). Fix $g\in G$. 
Then $\lambda_p(g)(f_j)$ is a contraction in $\PF_p(G)$,
and the \weakStar-limit of $\left(\lambda_p(g)(f_j)\right)_j$ is easily seen to be $\lambda_p(g)$. 
Since the unit ball of $A$ is \weakStar-closed, it follows that $\lambda_p(g)$ belongs to $A$, 
as desired. This finishes the proof.
\end{proof}

%==========================================================================================

It has been conjectured by Herz (\cite{Her73SynthSubgps}) that $\PM_p(G)=\CV_p(G)$ for every locally compact group $G$.
While this conjecture remains open, the previous result implies that $\PM_p(G)$ and $\CV_p(G)$ have the same invertible isometries.
In particular, we deduce the following criterion for confirming Herz' conjecture.

\begin{cor}
Let $G$ be a locally compact group, and let $p\in(1,\infty)$ with $p\neq 2$. Then
$\PM_p(G)=\CV_p(G)$ if and only if the invertible isometries of $\CV_p(G)$ generate $\CV_p(G)$ as a \weakStar-closed
subalgebra of $\Bdd(L^p(G))$.
\end{cor}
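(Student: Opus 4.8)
The plan is to prove the two implications separately, using \autoref{cor:IntermediateSubalg} applied to $A=\PM_p(G)$ (this is legitimate, since $\PM_p(G)$ is by definition a \weakStar-closed subalgebra of $\Bdd(L^p(G))$, so in particular its unit ball is \weakStar-closed and the inclusion is \weakStar-continuous by construction). The forward implication is the easy direction: if $\PM_p(G)=\CV_p(G)$, then of course the invertible isometries of $\CV_p(G)$ generate a \weakStar-closed subalgebra of $\Bdd(L^p(G))$ that contains $\PM_p(G)=\CV_p(G)$, because $\PM_p(G)$ is itself the \weakStar-closure of $\PF_p(G)\subseteq\CV_p(G)$ and, as noted in the proof of \autoref{cor:IntermediateSubalg}, each $\lambda_p(g)$ is a \weakStar-limit of contractions $\lambda_p(g)(f_j)$ in $\PF_p(G)$; hence $\PF_p(G)$ lies in the \weakStar-closed algebra generated by $\{\lambda_p(g):g\in G\}\subseteq \Unitary(\CV_p(G))$, and taking \weakStar-closures gives $\CV_p(G)$.

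For the converse, suppose the invertible isometries of $\CV_p(G)$ generate $\CV_p(G)$ as a \weakStar-closed subalgebra of $\Bdd(L^p(G))$. By \autoref{prp:UCVp:identifyUCVp} (which, since $p\in(1,\infty)$ and $p\neq 2$, applies), every element of $\Unitary(\CV_p(G))$ has the form $\gamma\lambda_p(t)$ for some $\gamma\in\TT$ and $t\in G$. Therefore the (non-closed) subalgebra of $\Bdd(L^p(G))$ generated by $\Unitary(\CV_p(G))$ is exactly the linear span of $\{\lambda_p(t):t\in G\}$, which is the image $\lambda_p(\CC[G])$ (for discrete $G$) or, in general, lies inside $\PM_p(G)$: indeed each $\lambda_p(t)$ is a \weakStar-limit of elements of $\PF_p(G)$ (the net $\lambda_p(t)(f_j)$ with $(f_j)_j$ a contractive approximate identity of $L^1(G)$, cf.\ the proof of \autoref{cor:IntermediateSubalg}), so the algebra generated by $\Unitary(\CV_p(G))$ is contained in $\PM_p(G)$, whence so is its \weakStar-closure. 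By hypothesis that \weakStar-closure equals $\CV_p(G)$, so $\CV_p(G)\subseteq\PM_p(G)$. The reverse inclusion $\PM_p(G)\subseteq\CV_p(G)$ is \autoref{prp:inclusions}, and we conclude $\PM_p(G)=\CV_p(G)$.

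The only point that requires a little care—and the main (mild) obstacle—is the claim that the subalgebra \emph{generated} by $\Unitary(\CV_p(G))$ is contained in $\PM_p(G)$ rather than merely that its \weakStar-closure is. For this one observes that $\PF_p(G)$, and hence $\PM_p(G)$, is closed under multiplication by each $\lambda_p(t)$ (since $\lambda_p(t)\lambda_p(f)=\lambda_p(\delta_t\ast f)$ for $f\in L^1(G)$, and $\delta_t\ast f\in L^1(G)$), so finite products of the generators $\gamma_i\lambda_p(t_i)$ are again of the form $\gamma\lambda_p(t_1\cdots t_k)$, and finite linear combinations of these lie in $\PM_p(G)$ by the approximate-identity argument above; taking \weakStar-limits stays inside the \weakStar-closed set $\PM_p(G)$. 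Once this containment is established the equivalence follows immediately, and no further input beyond \autoref{prp:inclusions} and \autoref{prp:UCVp:identifyUCVp} is needed.
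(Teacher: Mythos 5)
Your converse direction is correct and is essentially the argument the paper has in mind: by \autoref{prp:UCVp:identifyUCVp} every invertible isometry of $\CV_p(G)$ has the form $\gamma\lambda_p(t)$; each such operator lies in $\PM_p(G)$ by the approximate-identity argument; products of generators are again of this form and $\PM_p(G)$ is a \weakStar-closed linear subspace; hence the \weakStar-closed subalgebra $B$ generated by $\Unitary(\CV_p(G))$ satisfies $B\subseteq\PM_p(G)$, and the hypothesis $B=\CV_p(G)$ together with \autoref{prp:inclusions} gives $\PM_p(G)=\CV_p(G)$.

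The forward direction, however, has a genuine gap. There you need the \emph{opposite} containment, namely $\PF_p(G)\subseteq B$ (whence $\PM_p(G)\subseteq B$ and, under the hypothesis $\PM_p(G)=\CV_p(G)$, also $\CV_p(G)\subseteq B$). Your justification --- ``each $\lambda_p(g)$ is a \weakStar-limit of the contractions $\lambda_p(\delta_g\ast f_j)\in\PF_p(G)$, hence $\PF_p(G)$ lies in $B$'' --- is a non sequitur: that limit statement shows the \emph{generators} lie in $\PM_p(G)$, i.e.\ it re-proves $B\subseteq\PM_p(G)$, and says nothing about whether $\lambda_p(L^1(G))$ is contained in the algebra the generators span. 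The missing step is the standard but non-trivial fact that for $f\in C_c(G)$ the operator $\lambda_p(f)$ lies in the \weakStar-closed linear span of $\{\lambda_p(t)\colon t\in G\}$: since $t\mapsto\lambda_p(t)\xi$ is norm-continuous for each $\xi\in L^p(G)$, the Riemann sums $\sum_i f(t_i)\mu(E_i)\lambda_p(t_i)$ converge to $\lambda_p(f)$ in the strong operator topology; they are uniformly bounded by (approximately) $\|f\|_1$, and on bounded sets the \weakStar-topology agrees with the WOT (\autoref{pgr:topologiesBddE}), so the convergence is also \weakStar{}. Norm-density of $C_c(G)$ in $L^1(G)$ then yields $\PF_p(G)\subseteq B$. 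With this inserted, both implications go through, and the resulting proof coincides with the paper's (implicit) one: $\Unitary(\CV_p(G))=\TT\cdot\lambda_p(G)$ generates exactly $\PM_p(G)$ as a \weakStar-closed subalgebra of $\Bdd(L^p(G))$.
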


If in \autoref{cor:IntermediateSubalg} one is not interested in recovering the topology of $G$ (or if its topology is trivial),
it suffices to consider subalgebras of $\CV(G)$ whose unit balls are not necessarily \weakStar-closed. 
We denote by $M_\lambda^p(G)$ the norm closure of $\lambda_p(M^1(G))$ in $\Bdd(L^p(G))$.

%==========================================================================================
\begin{cor}
\label{prp:UCVp:UCVp_for_A}
Let $G$ be a locally compact group, and let $p\in(1,\infty)$ with $p\neq 2$.
Let $A$ be a norm-closed subalgebra of $\Bdd(L^p(G))$ such that $M^p_\lambda(G)\subseteq A\subseteq\CV_p(G)$.
Then we have natural group isomorphisms
\[
\TT\times G \xrightarrow{\cong} \Unitary(A),\andSep
G \xrightarrow{\cong} \pi_0(\Unitary(A)),
\]
given by $(\gamma,s)\mapsto\gamma \lambda_p(s)$, and by $s\mapsto [\lambda_p(s)]$, respectively.

In particular, a discrete group $G$ can be recovered from any closed subalgebra of $\Bdd(\ell^p(G))$ satisfying $\PF_p(G)\subseteq A\subseteq\CV_p(G)$ as
\[
G \cong \pi_0(\Unitary(A)).
\]
This applies in particular for the cases that $A$ is one of the following Banach algebras:
$\PF_p(G)$, $\PM_p(G)$, $\CV_p(G)$.
\end{cor}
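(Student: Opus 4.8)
The plan is to deduce this corollary from the already-established \autoref{cor:IntermediateSubalg} by verifying its hypotheses for algebras $A$ with $M^p_\lambda(G)\subseteq A\subseteq\CV_p(G)$. The subtlety is that \autoref{cor:IntermediateSubalg} assumes $A$ has a predual making the inclusion into $\Bdd(L^p(G))$ \weakStar-continuous, which we do not want to assume here; so instead we give a direct argument at the level of norm topology, following the same two-step strategy: first show $\Delta\colon (\gamma,s)\mapsto\gamma\lambda_p(s)$ is a well-defined bijective group homomorphism onto $\Unitary(A)$, and then observe that the topological statements in the intermediate case degenerate.

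First I would note that $\Delta$ maps into $A$: each $\lambda_p(s)$ lies in $M^p_\lambda(G)$ (it is $\lambda_p(\delta_s)$ with $\delta_s\in M^1(G)$), hence in $A$, and $\gamma\lambda_p(s)$ is an invertible isometry since $\lambda_p$ is a unitary representation. That $\Delta$ is a group homomorphism is routine from $\lambda_p(s)\lambda_p(t)=\lambda_p(st)$. For injectivity, if $\gamma\lambda_p(s)=\gamma'\lambda_p(s')$ then applying both sides to a characteristic function $\charFct_V$ of a small precompact neighborhood $V$ of the unit forces $s=s'$ (the supports $sV$ and $s'V$ must coincide up to null sets) and then $\gamma=\gamma'$. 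Surjectivity is the key point: given $u\in\Unitary(A)\subseteq\Unitary(\CV_p(G))$, \autoref{prp:UCVp:identifyUCVp} gives unique $\gamma\in\TT$ and $t\in G$ with $u=\gamma\lambda_p(t)$, and we have just seen $\gamma\lambda_p(t)\in A$; so $u=\Delta(\gamma,t)$. This establishes $\TT\times G\cong\Unitary(A)$ as abstract groups.

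For the quotient statement $G\cong\pi_0(\Unitary(A))$ one identifies, via the isomorphism $\Delta$, the connected component of the identity of $\Unitary(A)$ in the norm topology. Under $\Delta$, the norm topology on $\Unitary(A)$ pulls back to the topology on $\TT\times G$ described by \eqref{prp:UCVp:identifyUCVp:eqNorm}: $\|\beta\lambda_p(s)-\gamma\lambda_p(t)\|=\max\{|\beta-\gamma|,2\delta_{s,t}\}$. This norm topology is therefore the product of the usual topology on $\TT$ with the \emph{discrete} topology on $G$, so the identity component of $\Unitary(A)$ is exactly $\Delta(\TT\times\{e\})$, and the quotient $\pi_0(\Unitary(A))$ is $G$ with the discrete topology, which for discrete $G$ is the correct topology. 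The final ``in particular'' assertions follow by observing that for discrete $G$ one has $L^1(G)=M^1(G)$ and hence $M^p_\lambda(G)=\PF_p(G)$, so any closed $A$ with $\PF_p(G)\subseteq A\subseteq\CV_p(G)$ is covered; and $\PF_p(G)$, $\PM_p(G)$, $\CV_p(G)$ are all such algebras by \autoref{prp:inclusions}. The main obstacle is purely bookkeeping: making sure that surjectivity only needs $\lambda_p(s)\in A$ (which follows from $M^p_\lambda(G)\subseteq A$) rather than the stronger \weakStar-closedness hypothesis used in \autoref{cor:IntermediateSubalg}; no genuinely new difficulty arises beyond what \autoref{prp:UCVp:identifyUCVp} already provides.
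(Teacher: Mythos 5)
Your proposal is correct and is essentially the argument the paper intends (the corollary is stated there without an explicit proof): everything reduces to \autoref{prp:UCVp:identifyUCVp}, whose existence-and-uniqueness statement gives bijectivity of $(\gamma,s)\mapsto\gamma\lambda_p(s)$ onto $\Unitary(A)$ once one notes $\lambda_p(s)\in M^p_\lambda(G)\subseteq A$, and whose norm formula \eqref{prp:UCVp:identifyUCVp:eqNorm} shows the norm topology on $\Unitary(A)$ is that of $\TT\times G_{\mathrm{disc}}$, whence $\pi_0(\Unitary(A))\cong G$. Your observation that for discrete $G$ one has $M^p_\lambda(G)=\PF_p(G)$ correctly handles the final assertion, and no appeal to \autoref{cor:IntermediateSubalg} (or to any \weakStar-hypothesis) is needed.
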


%==========================================================================================
%==========================================================================================
\subsection{The unitary group of \texorpdfstring{$M_l(\PF_p(G))$}{Ml(Fpl(G))}}
\label{sec:UMFp}
In this subsection, we show that a locally compact group $G$ can be recovered from $\PF_p(G)$ by
considering the unitary group of the left multiplier algebra $M_l(\PF_p(G))$ with the strict topology.
We need to introduce some notation first.

%==========================================================================================
\begin{ntn}
Let $A$ be a Banach algebra.
We let $M_l(A)$ denote the left multiplier algebra of $A$, which we identify with a closed subalgebra of $\Bdd(A)$ by
\[
M_l(A) = \big\{ S\in\Bdd(A) \colon  S(ab)=S(a)b \text{ for all } a,b\in A \big\}.
\]
We define $\pi_0(\Unitary(M_l(A)))$ as in \autoref{ntn:UCVp:pi0} with respect to the norm-topology on $\Unitary(M_l(A))$.

The strict topology on $M_l(A)$ is the restriction of the strong operator topology on $\Bdd(A)$ to $M_l(A)$, that is, for a net $(S_j)_j$ in $M_l(A)$ and $S\in M_l(A)$ we have $S_j\xrightarrow{\mathrm{str}} S$ if and only if $S_j(a)\xrightarrow{\|\cdot\|}S(a)$ for every $a\in A$.
We write $\Unitary(M_l(A))_{\mathrm{str}}$ for the group $\Unitary(M_l(A))$ equipped with the restriction of the strict topology.
We write $\pi_0(\Unitary(M_l(A)))_{\mathrm{str}}$ for the group $\pi_0(\Unitary(M_l(A)))$ equipped with the quotient topology induced by $\Unitary(M_l(A))_{\mathrm{str}}\to \pi_0(\Unitary(M_l(A)))$.
\end{ntn}

%==========================================================================================
In the proof of the following result, we will use the unital, contractive homomorphism $\lambda_p\colon M^1(G)\to M^p_\lambda(G)$
given by left convolution. Note that $\lambda_p(s)=\lambda_p(\delta_s)$ for all $s\in G$.
%Given $s\in G$, we set $u_s=\lambda_p(\delta_s)\in\Unitary(M^p_\lambda(G))$.
%Since there is a natural inclusion $M^p_\lambda(G)\subseteq M_l(\PF_p(G))$, we consider $u_s$ as an element of $\Unitary(M_l(\PF_p(G)))$.

%==========================================================================================
\begin{thm}
\label{prp:UMFp:MainResult}
Let $G$ be a locally compact group, and let $p\in(1,\infty)$ with $p\neq 2$.
Then the maps
\[
\Lambda\colon\TT\times G\to\Unitary(M_l(\PF_p(G)))_{\mathrm{str}},\andSep
\Lambda'\colon G\to \pi_0(\Unitary(M_l(\PF_p(G)))_{\mathrm{str}},
\]
given by $\Lambda(\gamma,s)=\gamma \lambda_p(s)$, and by $\Lambda'(s)=[\lambda_p(s)]$, are isomorphisms of topological groups.
\end{thm}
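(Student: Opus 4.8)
The plan is to first identify the group $\Unitary(M_l(\PF_p(G)))$ with $\TT\times G$ abstractly, and then to show that, under this identification, the strict topology on it agrees with the product topology on $\TT\times G$; the key reduction will be to turn strict convergence in $M_l(\PF_p(G))$ into strong-operator convergence on $L^p(G)$ and then invoke \autoref{prp:UCVp:TopUCVp}.

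For the algebraic identification, recall that $M_l(\PF_p(G))$ is canonically a norm-closed, isometrically represented, unital subalgebra of $\Bdd(L^p(G))$, and observe that $M_\lambda^p(G)\subseteq M_l(\PF_p(G))\subseteq\CV_p(G)$: the right-hand inclusion is part of \autoref{prp:inclusions}, and the left-hand one holds because $\lambda_p(\mu)\lambda_p(f)=\lambda_p(\mu\ast f)\in\PF_p(G)$ for $\mu\in M^1(G)$ and $f\in L^1(G)$, so that $\lambda_p(M^1(G))$ (and hence its norm closure) consists of left multipliers of $\PF_p(G)$. Thus \autoref{prp:UCVp:UCVp_for_A} applies with $A=M_l(\PF_p(G))$ and identifies $\Unitary(M_l(\PF_p(G)))$ with $\{\gamma\lambda_p(t):\gamma\in\TT,\ t\in G\}$; equivalently, one can argue directly that any $u\in\Unitary(M_l(\PF_p(G)))$ is an invertible isometry of $\CV_p(G)$ (the inclusion being isometric) and apply \autoref{prp:UCVp:identifyUCVp}. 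Consequently $\Lambda$ and $\Lambda'$ are bijective group homomorphisms sitting inside the commuting square of topological quotient maps as in \autoref{prp:UCVp:TopUCVp}, so it suffices to prove that $\Lambda$ is a homeomorphism, $\Lambda'$ being then automatic.

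Continuity of $\Lambda$ is an approximation argument: since $\|\gamma_j\lambda_p(s_j)\|=1$ and $\lambda_p(L^1(G))$ is dense in $\PF_p(G)$, a $3\varepsilon$-estimate reduces the verification of $\gamma_j\lambda_p(s_j)\,b\to\gamma\lambda_p(s)\,b$ in $\Bdd(L^p(G))$ to $b=\lambda_p(f)$ with $f\in L^1(G)$, and there $\gamma_j\lambda_p(s_j)\lambda_p(f)=\gamma_j\lambda_p(\delta_{s_j}\ast f)$, so the estimate $\|\lambda_p(g)\|\le\|g\|_1$ reduces matters to $\gamma_j\to\gamma$ together with norm-continuity of left translation on $L^1(G)$. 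For the converse, assume $\gamma_j\lambda_p(s_j)\,a\to\gamma\lambda_p(s)\,a$ in $\Bdd(L^p(G))$ for every $a\in\PF_p(G)$. Applying these operators to a vector, we get $\gamma_j\lambda_p(s_j)(a\xi)\to\gamma\lambda_p(s)(a\xi)$ in $L^p(G)$ for all $a\in\PF_p(G)$ and $\xi\in L^p(G)$; since $\PF_p(G)\cdot L^p(G)$ is dense in $L^p(G)$ (already $\lambda_p(f_i)\xi\to\xi$ for a contractive approximate identity $(f_i)_i$) and $\sup_j\|\gamma_j\lambda_p(s_j)\|\le 1$, a further $3\varepsilon$-argument upgrades this to $\gamma_j\lambda_p(s_j)\to\gamma\lambda_p(s)$ in the strong operator topology on $L^p(G)$. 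By \autoref{pgr:topologiesBddE}, on $\Isom(L^p(G))$ the strong operator topology coincides with the \weakStar-topology, so \autoref{prp:UCVp:TopUCVp} gives $(\gamma_j,s_j)\to(\gamma,s)$, which is what we wanted.

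Most of the real difficulty has been front-loaded into \autoref{prp:UCVp:identifyUCVp} and \autoref{prp:UCVp:TopUCVp}, so no essentially new obstacle appears. The one step that genuinely needs care is the last reduction: strict convergence in $M_l(\PF_p(G))$ is a statement purely about products of $p$-pseudofunctions, and turning it into convergence in the strong operator topology on $L^p(G)$ relies on the density of $\PF_p(G)\cdot L^p(G)$ in $L^p(G)$ together with the uniform norm bound on the isometries in play.
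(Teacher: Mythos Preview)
Your proof is correct and follows essentially the same architecture as the paper: the algebraic identification via \autoref{prp:UCVp:UCVp_for_A} (using $M^p_\lambda(G)\subseteq M_l(\PF_p(G))\subseteq\CV_p(G)$), the reduction to showing that $\Lambda$ is a homeomorphism, and the forward continuity via $\|\lambda_p(\delta_{s_j}\ast f-\delta_s\ast f)\|\le\|\delta_{s_j}\ast f-\delta_s\ast f\|_1$ are all exactly as in the paper.

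The only place you diverge is in the converse direction. The paper argues directly: it picks a neighbourhood $V$ of the unit with $V^2V^{-2}\subseteq U$ and $\mu(V)<\infty$, applies the strictly convergent net to $\lambda_p(\charFct_V)\in\PF_p(G)$, then evaluates at the vector $\charFct_V\in L^p(G)$, and finally runs the disjoint-support estimate from the proof of \autoref{prp:UCVp:TopUCVp} to force $s_j\in sU$ eventually. You instead upgrade strict convergence to SOT convergence on all of $L^p(G)$ (via nondegeneracy of $\PF_p(G)$ on $L^p(G)$ and a $3\varepsilon$-argument) and then invoke \autoref{prp:UCVp:TopUCVp} as a black box. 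Your route is a clean reduction that avoids repeating the neighbourhood computation; the paper's route is more self-contained but essentially reproves a special case of what was already established. Both are short and valid.
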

\begin{proof}
Using \autoref{prp:inclusions}, we naturally identify $M_l(\PF_p(G))$ with a closed subalgebra of $\Bdd(L^p(G))$ satisfying $M^p_\lambda(G)\subseteq M_l(\PF_p(G))\subseteq\CV_p(G)$.
It follows from \autoref{prp:UCVp:UCVp_for_A} that $\Lambda$ and $\Lambda'$ are group isomorphisms.
Arguing as in the proof of \autoref{prp:UCVp:TopUCVp}, it is enough to show that $\Lambda$ is a homeomorphism.

To verify this, let $(\gamma_j,s_j)_j$ be a net in $\TT\times G$, and let $(\gamma,s)\in\TT\times G$.
Assume first that $(\gamma_j,s_j)\to(\gamma,s)$.
Given $f\in L^1(G)$, we have $\delta_{s_j}\ast f \xrightarrow{\|\cdot\|} \delta_s\ast f$ in $L^1(G)$.
Since $\lambda_p\colon L^1(G)\to \PF_p(G)$ is contractive, it follows that
\[
\gamma_j \lambda_p(s_j)(\lambda_p(f))
= \gamma_j \lambda_p( \delta_{s_j}\ast f )
\xrightarrow{\|\cdot\|} \gamma \lambda_p( \delta_{s}\ast f )
= \gamma \lambda_p(s)(\lambda_p(f))
\]
in $\PF_p(G)$.
Since the net $(\gamma_j \lambda_p(s_j))_j$ is bounded, and since $\lambda_p(L^1(G))$ is dense in $\PF_p(G)$, we deduce that $\gamma_j \lambda_p(s_j)(a) \xrightarrow{\|\cdot\|} \gamma \lambda_p(s)(a)$ in $\PF_p(G)$ for every $a\in \PF_p(G)$.
Thus, $\gamma_j \lambda_p(s_j) \xrightarrow{\mathrm{str}} \gamma \lambda_p(s)$ in $M_l(\PF_p(G))$.

Conversely, assume that $\gamma_j \lambda_p(s_j) \xrightarrow{\mathrm{str}} \gamma \lambda_p(s)$ in $M_l(\PF_p(G))$.
To show that $s_j\to s$, let $U$ be a neighbourhood of the unit in $G$.
Choose a neighbourhood $V$ of the unit with $V^2V^{-2}\subseteq U$ and $\mu(V)<\infty$.
Then $\charFct_V$ belongs to $L^1(G)$ and $L^p(G)$.
We have $\gamma_j \lambda_p(s_j)(\lambda_p(\charFct_V)) \xrightarrow{\|\cdot\|} \gamma \lambda_p(s)(\lambda_p(\charFct_V))$ in $\PF_p(G)$, and consequently
\[
\gamma_j \charFct_{s_jV^2}
= \gamma_j \lambda_p(s_j)(\lambda_p(\charFct_V))(\charFct_V) \xrightarrow{\|\cdot\|}
\gamma \lambda_p(s)(\lambda_p(\charFct_V))(\charFct_V)
= \gamma \charFct_{sV^2}
\]
in $L^p(G)$.
As in the proof of \autoref{prp:UCVp:TopUCVp} we deduce that $s_j$ eventually belongs to $sU$.
Thus, $s_j\to s$, and similarly $\gamma_j\to\gamma$.
\end{proof}

%==========================================================================================
%==========================================================================================
\section{The isomorphism problem for \texorpdfstring{$p$}{p}-convolution algebras}
\label{sec:MorCVp}

In this section, we prove our main result, \autoref{prp:MorCVp:IsoGpHq}, which completes
the proof of Theorem~\ref{thmintro:rigidity} from the introduction. Most of the work has been
done in the previous section, and what remains to do is to show that the ways in which the
topology of $G$ is recovered in \autoref{prp:UCVp:TopUCVp} and \autoref{prp:UMFp:MainResult}
only depend on the norm topology. While this is not very difficult for pseudofunctions,
it requires some work for convolvers and pseudomeasures.

Recall that a subset $S\subseteq A$ of a Banach algebra $A$ is said to be \emph{left essential}
if whenever $a\in A$ satisfies $ab=0$ for all $b\in S$, then $a=0$.

%==========================================================================================
\begin{lma}
\label{prp:MorCVp:transferWkStarConv}
Let $A$ and $B$ be dual Banach algebras, %, with preduals $F\subseteq A^*$ and $G\subseteq B^*$,
and let $\varphi\colon A\to B$ be a (not necessarily isometric) isomorphism of Banach algebras.

Let $(a_j)_j$ be a bounded net in $A$ that converges \weakStar{} to $a$ in $A$.
Set
\[S=\{b\in A\colon \|a_jb-ab\|\to 0\},\]
and assume that $S$ is left essential.
Then the net $(\varphi(a_j))_j$ converges \weakStar{} to $\varphi(a)$.
\end{lma}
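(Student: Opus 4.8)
The plan is to first establish \weakStar-convergence against a special generating set of functionals, then bootstrap to all functionals using the boundedness of the net. Since $B$ is a dual Banach algebra, write $B_* $ for its predual. The key observation is that $B_*$, viewed as a subspace of $B^*$, is norm-closed and invariant under the module actions of $B$; in particular, for $\omega\in B_*$ and $c\in B$, both $c\cdot\omega$ and $\omega\cdot c$ (defined by $\langle d, c\cdot\omega\rangle = \langle dc,\omega\rangle$ and $\langle d,\omega\cdot c\rangle=\langle cd,\omega\rangle$) again lie in $B_*$. Pulling back along the isomorphism $\varphi$, the functionals of the form $\omega\circ\varphi$ for $\omega\in B_*$ form a norm-closed subspace $F\subseteq A^*$ that is a predual-type module: it is invariant under the $A$-bimodule action because $\varphi$ is an algebra homomorphism. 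Note $F$ need not be \emph{the} predual of $A$ as a dual Banach algebra, but $F$ does separate points of $B$, hence of $A$.

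The first main step is: for every $b\in S$ and every $\omega\in B_*$, we have $\langle \varphi(a_j), b\cdot(\omega\circ\varphi^{-1})\rangle\to \langle\varphi(a),b\cdot(\omega\circ\varphi^{-1})\rangle$ — equivalently, working on the $A$-side, $\langle \varphi(a_jb),\omega'\rangle \to \langle\varphi(ab),\omega'\rangle$ for all $\omega'\in B_*$, which is immediate because $a_jb\to ab$ in \emph{norm} in $A$ and $\varphi$ is norm-continuous. Now consider the set
\[
T = \big\{\, c\in A \colon \langle \varphi(a_jc),\omega'\rangle \to \langle\varphi(ac),\omega'\rangle \text{ for all } \omega'\in B_* \,\big\}.
\]
We have just shown $S\subseteq T$. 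The set $T$ is a norm-closed linear subspace of $A$ (closedness uses that $(a_j)_j$ is bounded, so $(\varphi(a_j))_j$ is bounded in the dual-norm, making the convergence stable under norm-limits of $c$). Moreover $T$ is a \emph{right ideal}: if $c\in T$ and $d\in A$, then for $\omega'\in B_*$ we have $\langle\varphi(a_jcd),\omega'\rangle = \langle\varphi(a_jc),\omega''\rangle$ where $\omega'' = \varphi(d)\cdot\omega' \in B_*$ (here we use that $B$ is a dual Banach algebra, so $\omega''\in B_*$), and this converges to $\langle\varphi(ac),\omega''\rangle = \langle\varphi(acd),\omega'\rangle$. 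Hence $T$ is a norm-closed right ideal containing $S$.

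The second main step is to upgrade from $S$ (or rather the right ideal it generates) to $1\in A$, or more precisely to conclude $0\in A$ lies in the relevant closure so that the \weakStar-limit of $(\varphi(a_j))_j$ is forced to be $\varphi(a)$. Here is where left-essentiality of $S$ enters. Suppose, for contradiction, that $(\varphi(a_j))_j$ does not converge \weakStar\ to $\varphi(a)$; since the net is bounded and the \weakStar-topology on bounded sets is determined by the predual, there is a subnet and some $\omega_0\in B_*$ with $\langle\varphi(a_j),\omega_0\rangle \not\to\langle\varphi(a),\omega_0\rangle$. We want to derive a contradiction from $S$ being left essential. The idea: consider the element $x = \text{\weakStar-}\lim \varphi(a_{j_k})$ along a further subnet (exists by \weakStar-compactness of bounded sets in $B$), so $x\neq\varphi(a)$. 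Then $\varphi^{-1}$ of this — more carefully, we work in $B$: for every $b\in S$, $\varphi(b)\in B$ satisfies $\varphi(a_j)\varphi(b) = \varphi(a_jb)\to\varphi(ab)=\varphi(a)\varphi(b)$ in norm, hence $x\varphi(b) = \varphi(a)\varphi(b)$, i.e. $(x-\varphi(a))\varphi(b)=0$. Writing $y = \varphi^{-1}(x-\varphi(a))\in A$, this says $yb$ maps under $\varphi$ to $0$, so $yb = 0$ for all $b\in S$. Left-essentiality of $S$ forces $y = 0$, hence $x = \varphi(a)$, contradicting $x\neq\varphi(a)$. Therefore $(\varphi(a_j))_j$ converges \weakStar\ to $\varphi(a)$.

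\textbf{Main obstacle.} The delicate point is the compactness/subnet argument in the second step: one must be careful that $(\varphi(a_j))_j$, being bounded, has \weakStar-cluster points in $B$, and that \emph{every} such cluster point equals $\varphi(a)$ — this is exactly what forces convergence (a bounded net in a dual space converges \weakStar\ iff it has a unique \weakStar-cluster point). The computation $(x-\varphi(a))\varphi(b)=0$ requires that multiplication by the fixed element $\varphi(b)$ on the right is \weakStar-continuous, which is precisely the dual Banach algebra hypothesis on $B$. One should also double-check that left-essentiality is used in the correct handedness: $yb=0$ for all $b\in S$ with $y$ on the left matches the stated definition, so this goes through cleanly.
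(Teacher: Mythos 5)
Your proposal is correct, and its decisive second step is exactly the paper's argument: pass to a \weakStar-convergent subnet with limit $z$, use separate \weakStar-continuity of multiplication in $B$ to get $z\varphi(b)=\varphi(a)\varphi(b)$ for all $b\in S$, pull back via $\varphi^{-1}$, and invoke left-essentiality. The first step about the right ideal $T$ is harmless but superfluous, since the concluding argument never uses it.
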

\begin{proof}
Since $\varphi$ is bounded, the net $(\varphi(a_j))_j$ is bounded in $B$.
To show that $(\varphi(a_j))_j$ converges \weakStar{} to $\varphi(a)$, it is enough to show that for every subnet of $(\varphi(a_j))_j$ that converges \weakStar{} to some $z\in B$ we have $z=\varphi(a)$.
Thus, we may assume that $(\varphi(a_j))_j$ converges \weakStar{} to some $z\in B$.
We will show that $z=\varphi(a)$.

Let $b\in S$.
Since multiplication by $\varphi(b)$ is a \weakStar{} continuous map on $B$, we have
\[
\varphi(a_j)\varphi(b) \xrightarrow{w^*} z\varphi(b).
\]
On the other hand, by definition of $S$, we have $a_jb \xrightarrow{\|\cdot\|} ab$ and therefore
\[
\varphi(a_j)\varphi(b) \xrightarrow{\|\cdot\|} \varphi(a)\varphi(b).
\]
Thus, $z\varphi(b)=\varphi(a)\varphi(b)$.
Applying $\varphi^{-1}$, we deduce that $\varphi^{-1}(z)b=ab$.
Since this holds for all $b\in S$, and since $S$ is left essential, we conclude that $\varphi^{-1}(z)=a$, and thus $z=\varphi(a)$, as desired.
\end{proof}

%==========================================================================================

In the next lemma, for a unital Banach algebra $A$ we denote by $\Gl(A)$ the group of invertible elements in $A$.

\begin{lma}
\label{prp:MorCVp:MorFromCVpInducesGpHomo}
Let $G$ be a locally compact group, let $p\in(1,\infty)$, 
let $A\subseteq\Bdd(L^p(G))$ be a \weakStar-closed subalgebra containing $\PM_p(G)$
(for example $A=\PM_p(G)$, $A=\CV_p(G)$, or $A=\Bdd(L^p(G))$). Let $B$ be a dual Banach algebra, and let $\varphi\colon A\to B$ be an isomorphism of Banach algebras.
Then the map $\varphi\circ\lambda_p\colon G\to\Gl(B)_{w^*}$ is a continuous group homomorphism.
%when $B$ is equipped with the \weakStar-topology.
\end{lma}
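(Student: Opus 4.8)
The plan is to reduce everything to the transfer lemma \autoref{prp:MorCVp:transferWkStarConv}. The crucial subtlety is that $\varphi$ is a priori only a norm-isomorphism and is \emph{not} assumed \weakStar-continuous, so the (elementary) \weakStar-continuity of $\lambda_p\colon G\to A$ cannot simply be transported through $\varphi$; instead, for each convergent net of group elements one must produce a left essential subset of $A$ on which the relevant multiplications respect norm-limits.

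First I would dispose of the algebraic content. For $s\in G$ the operator $\lambda_p(s)$ is invertible in $A$ with inverse $\lambda_p(s^{-1})$, and $\lambda_p(st)=\lambda_p(s)\lambda_p(t)$, so $\lambda_p\colon G\to\Gl(A)$ is a group homomorphism. Since $\PM_p(G)$ is unital with unit the identity operator of $\Bdd(L^p(G))$, the algebra $A\supseteq\PM_p(G)$ is unital, hence so is $B\cong A$; the algebra isomorphism $\varphi$ therefore preserves the unit and maps $\Gl(A)$ onto $\Gl(B)$. Thus $\varphi\circ\lambda_p\colon G\to\Gl(B)$ is a group homomorphism, and only its \weakStar-continuity remains to be shown. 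I also note that, being a \weakStar-closed subalgebra of the dual Banach algebra $\Bdd(L^p(G))$, $A$ is itself a dual Banach algebra whose \weakStar-topology is the restriction of that of $\Bdd(L^p(G))$; in particular \autoref{prp:MorCVp:transferWkStarConv} is applicable to $\varphi$.

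Now fix a net $s_j\to s$ in $G$. By strong continuity of the left regular representation of $G$ on $L^p(G)$ one has $\lambda_p(s_j)\to\lambda_p(s)$ in the strong operator topology, and since these operators are isometries, \autoref{pgr:topologiesBddE} gives $\lambda_p(s_j)\to\lambda_p(s)$ \weakStar{} in $\Bdd(L^p(G))$, hence in $A$; the net is bounded by $1$. To apply \autoref{prp:MorCVp:transferWkStarConv} with $a_j=\lambda_p(s_j)$ and $a=\lambda_p(s)$ I must check that $S=\{b\in A\colon \|\lambda_p(s_j)b-\lambda_p(s)b\|\to 0\}$ is left essential. For $f\in L^1(G)$ we have $\lambda_p(s_j)\lambda_p(f)=\lambda_p(\delta_{s_j}\ast f)$ and $\delta_{s_j}\ast f\to\delta_s\ast f$ in $L^1(G)$ by norm-continuity of left translation, so contractivity of $\lambda_p\colon L^1(G)\to\PF_p(G)$ yields $\lambda_p(L^1(G))\subseteq S$; as $\lambda_p(s_j)$ and $\lambda_p(s)$ are contractions, $S$ is norm-closed, and therefore $\PF_p(G)\subseteq S$. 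Finally, $\PF_p(G)$ is left essential in $A$: if $a\in A$ satisfies $ab=0$ for all $b\in\PF_p(G)$, then $a\lambda_p(f_i)=0$ for a contractive approximate identity $(f_i)$ of $L^1(G)$, while $\lambda_p(f_i)\to 1$ \weakStar{} in $\Bdd(L^p(G))$ by \autoref{prp:approxiIdCOnvId}, and \weakStar-continuity of left multiplication by $a$ on $\Bdd(L^p(G))$ forces $a=a\cdot 1=0$. Hence $S$ is left essential, \autoref{prp:MorCVp:transferWkStarConv} gives $\varphi(\lambda_p(s_j))\to\varphi(\lambda_p(s))$ \weakStar{} in $B$, and $\varphi\circ\lambda_p$ is the desired continuous group homomorphism. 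The only genuinely delicate step is the left essentiality of $S$, and even this amounts merely to combining $\PF_p(G)\subseteq S$ with the fact that $1$ lies in the \weakStar-closure of $\PF_p(G)$.
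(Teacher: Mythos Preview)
Your proof is correct and follows essentially the same approach as the paper: both reduce to \autoref{prp:MorCVp:transferWkStarConv} by showing that $\lambda_p(L^1(G))\subseteq S$ via norm-continuity of left translation in $L^1(G)$, and then verify left essentiality using a contractive approximate identity and \autoref{prp:approxiIdCOnvId}. Your additional observations that $S$ is norm-closed (hence $\PF_p(G)\subseteq S$) and that $A$ inherits its dual Banach algebra structure from $\Bdd(L^p(G))$ are minor elaborations on the same argument.
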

\begin{proof}
Note that $\varphi\circ\lambda_p$ is clearly a group homomorphism.
To verify that it is continuous, let $(s_j)_j$ be a net that converges to $s$ in $G$.
%Let $\lambda_p\colon M_1(G)\to\CV_p(G)$ be the homomorphism from ?? %given by the integrated form of the left regular representation of $G$ on $L^p(G)$.
%For each $j$, set $u_j=\lambda_p(s_j)=\lambda_p(\delta_{s_j})\in M^p_\lambda(G)\subseteq A$, and set $u=u_s$.
%Then $(u_j)_j$ converges \weakStar{} to $u$ in $A$.
Then $(\lambda_p(s_j))_j$ converges \weakStar{} to $\lambda_p(s)$ in $A$.
%Then $(\lambda_p(s_j))_j\xrightarrow{w^*} u_s$ in $A$.

Set 
\[S=\{b\in A\colon \|\lambda_p(s_j)b-\lambda_p(s)b\|\to 0\}\subseteq A.\] %defined as in \autoref{prp:transferWkStarConv}.
We will verify that $S$ satisfies the assumptions of \autoref{prp:MorCVp:transferWkStarConv}.
%We first show that $\lambda_p(f)\in S$ for every $f\in L^1(G)$.
%Let $f\in L_(G)$.
%In $L^1(G)$, we have $\delta_{s_j}\ast f \xrightarrow{\|\cdot\|_1} f$.
%Since $\lambda_p$ is contractive, we deduce that $u_j \lambda_p(f) \xrightarrow{\|\cdot\|} \lambda_p(f)$.
%It follows $\varphi(u_j)\varphi(\lambda_p(f)) \xrightarrow{\|\cdot\|} \varphi(\lambda_p(f))$.
Let $f\in L^1(G)$. %\subseteq M_1(G)$.
Then
\[\lim_j\|\delta_{s_j}\ast f - \delta_s\ast f\|_1=0.\]
%Note that $\lambda_p(\delta_{s_j}\ast f)=\lambda_p(s_j)\lambda_p(f)$.
For each $j$, we have
\[
\|\lambda_p(s_j) \lambda_p(f) - \lambda_p(s) \lambda_p(f) \|
= \|\lambda_p(\delta_{s_j}\ast f - \delta_s\ast f)\|
\leq \|\delta_{s_j}\ast f - \delta_s\ast f\|_1,
\]
and therefore
$\lim_j \|\lambda_p(s_j) \lambda_p(f) - \lambda_p(s) \lambda_p(f) \| =0$.
Thus, $\lambda_p(f)$ belongs to $S$ for each $f\in L^1(G)$.

To verify that $S$ is left essential, let $x\in A$ satisfy $xb=0$ for every $b\in S$.
Let $(f_j)_j$ be a contractive approximate identity in $L^1(G)$.
By \autoref{prp:approxiIdCOnvId}, the net $(\lambda_p(f_j))_j$ converges \weakStar{} to $1$ in $\Bdd(L^p(G))$.
Since left multiplication by $x$ is \weakStar{} continuous, we deduce that $(x\lambda_p(f_j))_j$ converges \weakStar{} to $x$.
It follows that $x=0$, as desired.

Applying \autoref{prp:MorCVp:transferWkStarConv}, we deduce that $(\varphi(\lambda_p(s_j)))_j$ converges \weakStar{} to $\varphi(\lambda_p(s))$ in $B$, as desired.
\end{proof}

We are now ready to prove that $\CV_p(G)$ and $\PM_p(G)$ remember $G$.

%==========================================================================================
\begin{thm}
\label{prp:IsoCVp:IsoWkClosed}
Let $G$ and $H$ be locally compact groups, and let $p\in(1,\infty)$ with $p\neq 2$.
Let $A\subseteq\Bdd(L^p(G))$ and $B\subseteq\Bdd(L^p(H))$ be \weakStar-closed subalgebras such that
\[
\PM_p(G) \subseteq A \subseteq \CV_p(G), \andSep
\PM_p(H) \subseteq B \subseteq \CV_p(H).
\]
Assume that $A$ and $B$ are isometrically isomorphic as Banach algebras.
Then $G$ and $H$ are isomorphic as topological groups.
\end{thm}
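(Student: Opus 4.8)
The plan is to recover $G$ and $H$ from the path components of the unitary groups $\Unitary(A)$ and $\Unitary(B)$, equipped with the \weakStar-topology, via \autoref{cor:IntermediateSubalg}, and then to transport the isometric isomorphism $\Phi\colon A\to B$ across these identifications, using \autoref{prp:MorCVp:MorFromCVpInducesGpHomo} to handle continuity. First note that $A$ and $B$, being \weakStar-closed subalgebras of the dual Banach algebras $\Bdd(L^p(G))$ and $\Bdd(L^p(H))$ (whose multiplication is separately \weakStar-continuous), are themselves dual Banach algebras, with preduals for which the inclusions $A\hookrightarrow\Bdd(L^p(G))$ and $B\hookrightarrow\Bdd(L^p(H))$ are \weakStar-continuous. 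Since moreover $\PF_p(G)\subseteq\PM_p(G)\subseteq A\subseteq\CV_p(G)$, and likewise for $B$, \autoref{cor:IntermediateSubalg} applies and yields isomorphisms of topological groups $\alpha_G\colon G\xrightarrow{\cong}\pi_0(\Unitary(A))_{w^*}$ and $\alpha_H\colon H\xrightarrow{\cong}\pi_0(\Unitary(B))_{w^*}$, given by $\alpha_G(s)=[\lambda_p(s)]$ and $\alpha_H(t)=[\lambda_p(t)]$. In particular, the map $(\gamma,s)\mapsto\gamma\lambda_p(s)$ is a bijection from $\TT\times G$ onto $\Unitary(A)$, so $\lambda_p(s)\in\Unitary(A)$ for every $s\in G$, and similarly for $\Unitary(B)$.

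Next, since $\Phi$ is an isometric isomorphism of Banach algebras, it restricts to a group isomorphism $\Unitary(A)\to\Unitary(B)$: for $u\in\Unitary(A)$ the element $\Phi(u)$ is invertible in $B$ with $\|\Phi(u)\|=\|\Phi(u^{-1})\|=1$, hence lies in $\Unitary(B)$, and surjectivity follows by applying this to $\Phi^{-1}$. As $\Phi$ is isometric, this restriction is moreover a homeomorphism between $\Unitary(A)$ and $\Unitary(B)$ in their norm topologies; therefore it carries $\Unitary(A)_0$ onto $\Unitary(B)_0$ and descends to a group isomorphism $\bar\Phi\colon\pi_0(\Unitary(A))\to\pi_0(\Unitary(B))$ with $\bar\Phi([v])=[\Phi(v)]$. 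We set $\psi=\alpha_H^{-1}\circ\bar\Phi\circ\alpha_G\colon G\to H$, which is a group isomorphism.

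It remains to verify that $\psi$ and $\psi^{-1}$ are continuous. For $\psi$: since $A$ is a \weakStar-closed subalgebra of $\Bdd(L^p(G))$ containing $\PM_p(G)$ and $B$ is a dual Banach algebra, \autoref{prp:MorCVp:MorFromCVpInducesGpHomo} applies to $\Phi$ and shows that $\Phi\circ\lambda_p\colon G\to\Gl(B)_{w^*}$ is a continuous group homomorphism; its image lies in $\Unitary(B)$, because $\lambda_p(s)\in\Unitary(A)$ for every $s\in G$ and $\Phi$ maps $\Unitary(A)$ onto $\Unitary(B)$. Composing with the quotient map $q_B\colon\Unitary(B)_{w^*}\to\pi_0(\Unitary(B))_{w^*}$ (continuous by definition of the quotient topology) and with the homeomorphism $\alpha_H^{-1}$, we find, for $s\in G$,
\[
\alpha_H^{-1}\big(q_B(\Phi(\lambda_p(s)))\big)
= \alpha_H^{-1}\big([\Phi(\lambda_p(s))]\big)
= \alpha_H^{-1}\big(\bar\Phi(\alpha_G(s))\big)
= \psi(s),
\]
so that $\psi=\alpha_H^{-1}\circ q_B\circ(\Phi\circ\lambda_p)$ is continuous. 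The same argument applied to $\Phi^{-1}\colon B\to A$ — which satisfies the hypotheses of \autoref{prp:MorCVp:MorFromCVpInducesGpHomo}, since $B\supseteq\PM_p(H)$ is \weakStar-closed and $A$ is a dual Banach algebra — shows that $\psi^{-1}=\alpha_G^{-1}\circ\bar\Phi^{-1}\circ\alpha_H$ is continuous as well. Therefore $\psi\colon G\to H$ is an isomorphism of topological groups, as claimed.

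The only nontrivial input is \autoref{prp:MorCVp:MorFromCVpInducesGpHomo}, which transfers \weakStar-convergence of nets of the form $(\lambda_p(s_j))_j$ through $\Phi$ despite the fact that $\Phi$ is not assumed \weakStar-continuous — this is precisely what compensates for $\CV_p(G)$ not being known to have a unique predual, and it rests on the observation that the set $\{b\in A:\|\lambda_p(s_j)b-\lambda_p(s)b\|\to0\}$ contains $\lambda_p(L^1(G))$ and is left essential. Everything else is bookkeeping of the identifications $\alpha_G$ and $\alpha_H$.
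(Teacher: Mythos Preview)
Your proof is correct and follows essentially the same approach as the paper's: identify $G$ and $H$ with $\pi_0(\Unitary(A))_{w^*}$ and $\pi_0(\Unitary(B))_{w^*}$ via \autoref{cor:IntermediateSubalg}, transport the isometric isomorphism to the unitary groups (using norm-continuity to pass to $\pi_0$), and invoke \autoref{prp:MorCVp:MorFromCVpInducesGpHomo} to obtain continuity in the \weakStar-topology. The only cosmetic difference is that the paper first argues that $\varphi\colon\Unitary(A)_{w^*}\to\Unitary(B)_{w^*}$ is a homeomorphism and then passes to the quotient, whereas you go directly to continuity of $\psi$ via the composition $\alpha_H^{-1}\circ q_B\circ(\Phi\circ\lambda_p)$; both are equivalent rearrangements of the same argument.
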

\begin{proof}
Let $\varphi\colon A\to B$ be an isometric isomorphism.
It induces a group isomorphism $\Unitary(A)\to\Unitary(B)$ that we also denote by $\varphi$.
Let $\Delta_A\colon\TT\times G\to\Unitary(A)_{w^*}$, $\Delta_A'\colon G\to\pi_0(\Unitary(A))_{w^*}$, $\Delta_B\colon\TT\times H\to\Unitary(B)_{w^*}$, 
and $\Delta_B'\colon H\to\pi_0(\Unitary(B))_{w^*}$ be the natural isomorphisms of topological groups from \autoref{cor:IntermediateSubalg}.

By \autoref{prp:MorCVp:MorFromCVpInducesGpHomo}, the map $G\to\Unitary(B)_{w^*}$ given by $s\mapsto\varphi(u_s)$ is continuous.
It follows that the map $\TT\times G\to\Unitary(B)_{w^*}$ given by $(\gamma,s)\mapsto\gamma\varphi(u_s)$ is continuous.
Precomposing with $\Delta_A^{-1}$, we deduce that $\varphi\colon\Unitary(A)_{w^*}\to\Unitary(B)_{w^*}$ is continuous.
The same argument applied to $\varphi^{-1}\colon B\to A$ shows that $\varphi\colon\Unitary(A)_{w^*}\to\Unitary(B)_{w^*}$ is an isomorphism of topological groups.

The map $\varphi\colon\Unitary(A)\to\Unitary(B)$ is continuous for the norm-topologies on $\Unitary(A)$ and $\Unitary(B)$.
Therefore, $\varphi$ induces a group isomorphism $\psi\colon\pi_0(\Unitary(A)) \to \pi_0(\Unitary(B))$ given by $\psi([u])=[\varphi(u)]$, for $u\in\Unitary(A)$.
In particular, the following diagram commutes:
\[
\xymatrix@R-5pt{
\TT \times G \ar[d]_{(\gamma,s)\mapsto s} \ar[r]^-{\Delta_A}
& \Unitary(A)_{w^*} \ar[d]^{u\mapsto[u]} \ar[r]^-{\varphi}
& \Unitary(B)_{w^*} \ar[d]^{u\mapsto[u]}
& \TT\times H \ar[d]^{(\gamma,s)\mapsto s} \ar[l]_-{\Delta_B} \\
G \ar[r]_-{\Delta_A'}
& { \pi_0(\Unitary(A)_{w^*} } \ar[r]_-{\psi}
& { \pi_0(\Unitary(B)_{w^*} }
& H \ar[l]^-{\Delta_B'}.
}
\]
The upper horizontal maps are isomorphisms of topological groups.
The downward maps are quotient maps.
It follows that $\psi$ is an isomorphism of topological groups.
Using that the lower horizontal maps $\Delta_A'$ and $\Delta_B'$ are isomorphisms of topological groups, we deduce that $G$ and $H$ are isomorphic as topological groups.
\end{proof}

%==========================================================================================

The proof that $\PF_p(G)$ remembers $G$ is somewhat less complicated.

\begin{lma}
\label{prp:IsoCVp:IsoFp}
Let $G$ and $H$ be locally compact groups, and let $p\in(1,\infty)$ with $p\neq 2$.
Assume that $\PF_p(G)$ and $\PF_p(H)$ are isometrically isomorphic as Banach algebras.
Then $G$ and $H$ are isomorphic as topological groups.
\end{lma}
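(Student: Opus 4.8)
The plan is to run the argument of \autoref{prp:IsoCVp:IsoWkClosed}, but using the intrinsic description of $G$ from the left multiplier algebra $M_l(\PF_p(G))$ provided by \autoref{prp:UMFp:MainResult} instead of the \weakStar-description from \autoref{cor:IntermediateSubalg}. So let $\varphi\colon\PF_p(G)\to\PF_p(H)$ be an isometric isomorphism. Since the left multiplier algebra of a Banach algebra $A$, with the operator norm it inherits as a subalgebra of $\Bdd(A)$, is functorial in $A$, the assignment $\tilde\varphi(S)=\varphi\circ S\circ\varphi^{-1}$ defines an isometric isomorphism $\tilde\varphi\colon M_l(\PF_p(G))\to M_l(\PF_p(H))$. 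First I would observe that, in contrast with the cases of convolvers and pseudomeasures, $\tilde\varphi$ is \emph{automatically} a homeomorphism for the strict topologies: if $S_j\to S$ strictly in $M_l(\PF_p(G))$, then for every $b\in\PF_p(H)$ we have
\[
\tilde\varphi(S_j)(b)=\varphi\big(S_j(\varphi^{-1}(b))\big)\xrightarrow{\|\cdot\|}\varphi\big(S(\varphi^{-1}(b))\big)=\tilde\varphi(S)(b),
\]
using that $S_j(\varphi^{-1}(b))\to S(\varphi^{-1}(b))$ in norm and that $\varphi$ is isometric; applying the same to $\tilde\varphi^{-1}$ gives strict bicontinuity. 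The point is that the strict topology only refers to the Banach-algebra structure, so no analogue of \autoref{prp:MorCVp:transferWkStarConv} is needed here.

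Next I would restrict $\tilde\varphi$ to the invertible isometries to get an isomorphism of topological groups $\Unitary(M_l(\PF_p(G)))_{\mathrm{str}}\to\Unitary(M_l(\PF_p(H)))_{\mathrm{str}}$. Since $\tilde\varphi$ is in addition a homeomorphism for the norm topologies, it maps the norm-connected component of the identity $\Unitary(M_l(\PF_p(G)))_0$ onto $\Unitary(M_l(\PF_p(H)))_0$, hence descends to a group isomorphism $\psi$ on the quotients $\pi_0$; and since the quotient maps are topological quotient maps and $\tilde\varphi$ is a homeomorphism, $\psi\colon\pi_0(\Unitary(M_l(\PF_p(G))))_{\mathrm{str}}\to\pi_0(\Unitary(M_l(\PF_p(H))))_{\mathrm{str}}$ is an isomorphism of topological groups. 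Finally I would assemble the commuting diagram
\[
\xymatrix@R-5pt{
\TT \times G \ar[d] \ar[r]^-{\Lambda_G}
& \Unitary(M_l(\PF_p(G)))_{\mathrm{str}} \ar[d] \ar[r]^-{\tilde\varphi}
& \Unitary(M_l(\PF_p(H)))_{\mathrm{str}} \ar[d]
& \TT\times H \ar[d] \ar[l]_-{\Lambda_H} \\
G \ar[r]_-{\Lambda_G'}
& \pi_0(\Unitary(M_l(\PF_p(G))))_{\mathrm{str}} \ar[r]_-{\psi}
& \pi_0(\Unitary(M_l(\PF_p(H))))_{\mathrm{str}}
& H \ar[l]^-{\Lambda_H'}
}
\]
in which $\Lambda_G,\Lambda_G',\Lambda_H,\Lambda_H'$ are the isomorphisms of topological groups furnished by \autoref{prp:UMFp:MainResult}: the upper horizontal maps are isomorphisms of topological groups and the downward maps are quotient maps, so a diagram chase yields $G\cong H$ as topological groups.

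I expect no genuine obstacle in this lemma; this is the reason the authors describe it as ``somewhat less complicated''. The only thing requiring mild care is the functoriality of $M_l(\cdot)$ under isometric isomorphisms together with the automatic compatibility with the strict topology, because once that is in place everything reduces formally to \autoref{prp:UMFp:MainResult} and the connectedness bookkeeping of the $\pi_0$ construction.
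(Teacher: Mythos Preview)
Your proposal is correct and follows essentially the same approach as the paper: pass to the induced isometric isomorphism on left multiplier algebras, note that it is automatically bicontinuous for both the norm and the strict topologies, and then invoke \autoref{prp:UMFp:MainResult} together with the $\pi_0$ bookkeeping. Your write-up is in fact slightly more explicit than the paper's (you spell out $\tilde\varphi(S)=\varphi\circ S\circ\varphi^{-1}$ and verify strict continuity directly), but the argument is the same.
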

\begin{proof}
The proof is similar to that of \autoref{prp:IsoCVp:IsoWkClosed}.
Let $\varphi\colon \PF_p(G)\to \PF_p(H)$ be an isometric isomorphism.
Then $\varphi$ induces an isometric isomorphism
\[\Phi\colon M_l(\PF_p(G))\to M_l(\PF_p(H)).\]
Set $A=M_l(\PF_p(G))$ and $B=M_l(\PF_p(H))$.
Note that $\Phi$ and $\Phi^{-1}$ are both norm-continuous and strictly continuous.

The map $\Phi$ induces a group isomorphim $\alpha\colon \Unitary(A) \to \Unitary(B)$.
Since $\alpha$ is a homeomorphism for the norm-topologies, it induces a group isomorphism $\psi\colon\pi_0(\Unitary(A)) \to \pi_0(\Unitary(B))$.

Since $\Phi$ is a homeomorphism for the strict topologies, we deduce that
\[\alpha\colon \Unitary(A)_{\mathrm{str}} \to \Unitary(B)_{\mathrm{str}}\]
is an isomorphism of topological groups.
It follows that
\[\psi\colon \pi_0(\Unitary(A))_{\mathrm{str}} \to \pi_0(\Unitary(B))_{\mathrm{str}}\]
is an isomorphism of topological groups.
Let
\[\Lambda_A'\colon G\to\pi_0(\Unitary(A))_{\mathrm{str}} \ \mbox{ and } \ \Lambda_B'\colon H\to\pi_0(\Unitary(B))_{\mathrm{str}}\]
be the natural isomorphisms of topological groups from \autoref{prp:UMFp:MainResult}.
Then the following maps are isomorphisms
\[
\xymatrix{G \ar[r]^-{\Lambda_B'} & \pi_0(\Unitary(A))_{\mathrm{str}}
\ar[r]^-{\psi}& \pi_0(\Unitary(B))_{\mathrm{str}}
&
H\ar[l]_-{\Lambda_A'},}
\]
which shows that $G$ and $H$ are isomorphic as topological groups.
\end{proof}

%==========================================================================================

Putting these results together, we arrive at one of the main statements of our work.

\begin{cor}
\label{prp:MorCVp:CharMorCVp}
Let $G$ and $H$ be locally compact groups, and let $p\in(1,\infty)$ with $p\neq 2$.
Then the following are equivalent:
\begin{enumerate}
\item
There is an isomorphism of topological groups $G\cong H$;
\item
There is an isometric isomorphism $\PF_p(G)\cong \PF_p(H)$;
\item
There is an isometric isomorphism $\PM_p(G)\cong\PM_p(H)$;
\item
There is an isometric isomorphism $\CV_p(G)\cong\CV_p(H)$.
\end{enumerate}
\end{cor}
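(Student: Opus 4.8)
The plan is to prove the four conditions equivalent by establishing the trivial implications $(1)\Rightarrow(2)$, $(1)\Rightarrow(3)$, $(1)\Rightarrow(4)$ directly, and then invoking the results already proved in this section for the three converses; this reduces the corollary to an assembly of \autoref{prp:IsoCVp:IsoFp} and \autoref{prp:IsoCVp:IsoWkClosed}, so I do not expect a genuine obstacle.

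For $(1)\Rightarrow(2),(3),(4)$ I would argue as follows. Given an isomorphism of topological groups $\theta\colon G\to H$, normalize the Haar measures so that $\theta$ pushes the fixed left Haar measure of $G$ forward to that of $H$; then $W\colon L^p(G)\to L^p(H)$, $W\xi=\xi\circ\theta^{-1}$, is a surjective linear isometry satisfying $W\lambda_p(s)W^{-1}=\lambda_p(\theta(s))$ and $W\rho_p(s)W^{-1}=\rho_p(\theta(s))$ for all $s\in G$ (the representation on the left being that of $G$, on the right that of $H$). Conjugation $\mathrm{Ad}_W\colon\Bdd(L^p(G))\to\Bdd(L^p(H))$ is then an isometric isomorphism, and it is \weakStar-continuous because, under the identification of these algebras with the duals of $L^p(G)\tensMax L^{p'}(G)$ and $L^p(H)\tensMax L^{p'}(H)$, it is the adjoint of the bounded map sending $\eta\otimes\psi$ to $W^{-1}\eta\otimes W^*\psi$. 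Consequently $\mathrm{Ad}_W$ carries $\lambda_p(L^1(G))$ onto $\lambda_p(L^1(H))$ and hence $\PF_p(G)$ isometrically onto $\PF_p(H)$; it carries \weakStar-closures to \weakStar-closures and hence $\PM_p(G)$ onto $\PM_p(H)$; and it carries the commutant of $\rho_p(G)$ to the commutant of $\rho_p(G)$ on the other side, hence $\CV_p(G)$ onto $\CV_p(H)$.

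For the converses, $(2)\Rightarrow(1)$ is exactly \autoref{prp:IsoCVp:IsoFp}. For $(3)\Rightarrow(1)$ I would apply \autoref{prp:IsoCVp:IsoWkClosed} with $A=\PM_p(G)$ and $B=\PM_p(H)$, which is permitted since each of these is a \weakStar-closed subalgebra squeezed between $\PM_p$ and $\CV_p$ of the respective group; and for $(4)\Rightarrow(1)$ I would apply the same theorem with $A=\CV_p(G)$ and $B=\CV_p(H)$. Together with the previous paragraph this yields $(1)\Leftrightarrow(2)\Leftrightarrow(3)\Leftrightarrow(4)$, establishing the equivalence of all four statements. The only point that warrants a line of care is checking that $\mathrm{Ad}_W$ genuinely restricts to an isomorphism of each of the three subalgebras as claimed, which is handled by the short arguments above.
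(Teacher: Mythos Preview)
Your proposal is correct and follows essentially the same approach as the paper: the paper simply declares that (1) implies (2), (3), and (4) is ``clear'' and then invokes \autoref{prp:IsoCVp:IsoWkClosed} for $(3)\Rightarrow(1)$ and $(4)\Rightarrow(1)$ and \autoref{prp:IsoCVp:IsoFp} for $(2)\Rightarrow(1)$, exactly as you do. The only difference is that you spell out the conjugation argument for the forward implications in detail, which is fine but more than the paper deems necessary.
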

\begin{proof}
It is clear that~(1) implies~(2), (3) and~(4).
It follows from \autoref{prp:IsoCVp:IsoWkClosed} that~(3) implies~(1), and that~(4) implies~(1).
Further, we obtain from \autoref{prp:IsoCVp:IsoFp} that~(2) implies~(1).
\end{proof}

%==========================================================================================
In \cite[Corollary~4.14]{GarThi18:ReprConvLq}, we showed that for every locally compact group $G$, and $p,q\in(1,\infty)$, the following are equivalent:
\begin{enumerate}
\item
$\left| \tfrac{1}{p} - \tfrac{1}{2} \right| = \left| \tfrac{1}{q} - \tfrac{1}{2} \right|$.
\item
There is an isometric (anti-)isomorphism $\PF_p(G) \cong \PF_q(G)$;
\item
There is an isometric (anti-)isomorphism $\PM_p(G) \cong \PM_q(G)$;
\item
There is an isometric (anti-)isomorphism $\CV_p(G) \cong \CV_q(G)$;
\end{enumerate}	
The first condition means that $p$ and $q$ are either equal or dual H\"older exponents.

%==========================================================================================
Combined with \autoref{prp:MorCVp:CharMorCVp}, we obtain the following strong rigidity result.
It shows that the Banach algebras of $p$-pseudofunctions, of $p$-pseudomeasures, and of $p$-convolvers remember both the locally group and the H\"older exponent.

%==========================================================================================
\begin{cor}
\label{prp:MorCVp:IsoGpHq}
Let $G$ and $H$ be locally compact groups, and let $p,q\in(1,2)$. % with $p\neq 2$.
Then the following are equivalent:
\begin{enumerate}
\item
We have $p=q$, and there is an isomorphism of topological groups $G\cong H$;
\item
There is an isometric isomorphism $\PF_p(G)\cong \PF_q(H)$;
\item
There is an isometric isomorphism $\PM_p(G)\cong\PM_q(H)$;
\item
There is an isometric isomorphism $\CV_p(G)\cong\CV_q(H)$.
\end{enumerate}
\end{cor}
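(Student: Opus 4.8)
The plan is to deduce the statement from \autoref{prp:MorCVp:CharMorCVp} (single-exponent rigidity) together with the authors' earlier determination, in \cite[Corollary~4.14]{GarThi18:ReprConvLq}, of when the $p$- and $q$-convolution algebras of a \emph{fixed} group are isometrically (anti-)isomorphic. The implication $(1)\Rightarrow(2),(3),(4)$ is the easy direction: a topological group isomorphism $G\cong H$ pushes a left Haar measure of $G$ to a positive multiple of a left Haar measure of $H$, hence induces an isometric isomorphism $L^p(G)\cong L^p(H)$ intertwining the left regular representations, and therefore isometric isomorphisms $\PF_p(G)\cong\PF_p(H)$, and, passing to weak-$*$ closures and commutants, $\PM_p(G)\cong\PM_p(H)$ and $\CV_p(G)\cong\CV_p(H)$; this is precisely the easy half of \autoref{prp:MorCVp:CharMorCVp} with $q=p$.

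For the converse implications I would first extract the group isomorphism, and only afterwards pin down the exponent. The crucial point is that the proofs of \autoref{prp:IsoCVp:IsoFp} and \autoref{prp:IsoCVp:IsoWkClosed} never use that the two Banach algebras in question come from the \emph{same} exponent: in each case $G$ is recovered \emph{intrinsically} from the Banach algebra --- via $\pi_0(\mathcal{U}(M_l(\PF_p(G))))_{\mathrm{str}}$ for pseudofunctions, and via $\pi_0(\mathcal{U}(-))_{w^*}$ together with the automatic weak-$*$ continuity of isometric isomorphisms on the unitary group for pseudomeasures and convolvers --- and the isometric isomorphism merely transports these intrinsic data. Hence an isometric isomorphism $\PF_p(G)\cong\PF_q(H)$ (respectively $\PM_p(G)\cong\PM_q(H)$, respectively $\CV_p(G)\cong\CV_q(H)$) already yields an isomorphism of topological groups $G\cong H$, with no hypothesis relating $p$ and $q$.

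Once $G\cong H$ is available, naturality of the constructions gives $\PF_q(H)\cong\PF_q(G)$ (and likewise for $\PM$ and $\CV$), so composing with the given isomorphism produces an isometric isomorphism $\PF_p(G)\cong\PF_q(G)$ (resp.\ $\PM_p(G)\cong\PM_q(G)$, resp.\ $\CV_p(G)\cong\CV_q(G)$). By \cite[Corollary~4.14]{GarThi18:ReprConvLq} this forces $\left|\tfrac1p-\tfrac12\right|=\left|\tfrac1q-\tfrac12\right|$; since $p,q\in(1,2)$ we have $\tfrac1p-\tfrac12>0$ and $\tfrac1q-\tfrac12>0$, so the absolute values can be dropped and $p=q$ follows. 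Together with $G\cong H$ this establishes $(1)$, completing the cycle.

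The only real subtlety --- the ``main obstacle'' in what is otherwise a bookkeeping argument --- is the claim that the recovery of $G$ in \autoref{prp:IsoCVp:IsoFp} and \autoref{prp:IsoCVp:IsoWkClosed} is genuinely exponent-independent. For pseudofunctions this is transparent, since the strict topology on $M_l(\PF_p(G))$ is defined purely in terms of the Banach-algebra structure of $\PF_p(G)$. For pseudomeasures and convolvers one must be careful, because the ambient weak-$*$ topology a priori depends on a choice of predual; but this is already dealt with inside the proof of \autoref{prp:IsoCVp:IsoWkClosed}, where one verifies directly that any isometric isomorphism is automatically weak-$*$ continuous \emph{on the group of invertible isometries} (using \autoref{prp:MorCVp:MorFromCVpInducesGpHomo} and \autoref{cor:IntermediateSubalg}), and that verification makes no reference to the exponent. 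So the step goes through verbatim, and no new argument is needed.
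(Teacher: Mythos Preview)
Your proposal is correct and follows essentially the same route as the paper. The paper states the corollary immediately after recalling \cite[Corollary~4.14]{GarThi18:ReprConvLq} and simply says it is obtained by combining that result with \autoref{prp:MorCVp:CharMorCVp}; you have spelled out exactly how that combination works, including the key observation that the recovery of $G$ in \autoref{prp:IsoCVp:IsoFp} and \autoref{prp:IsoCVp:IsoWkClosed} is intrinsic to the Banach-algebra structure and therefore insensitive to the exponent, which is what allows one to first conclude $G\cong H$ and only then pin down $p=q$ via the fixed-group result.
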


%==========================================================================================
\begin{rmk}
Let $G$ be a locally compact group, let $p\in(1,\infty)$, and let $p'$ be the dual H\"older exponent.
By \cite[Proposition~4.11]{GarThi18:ReprConvLq}, there are natural isometric \emph{anti-}isomorphisms
\[
\PF_p(G) \cong \PF_{p'}(G), \andSep
\PM_p(G)\cong\PM_{p'}(G), \andSep
\CV_p(G)\cong\CV_{p'}(G).
\]
Using this, we obtain a version of \autoref{prp:MorCVp:IsoGpHq} that allows for $p,q\in(1,\infty)\setminus\{2\}$, where~(1) is replaced by the condition that $\big| \tfrac{1}{p} - \tfrac{1}{2} \big| = \big| \tfrac{1}{q} - \tfrac{1}{2} \big|$ and $G\cong H$; and where in~(2), (3) and (4) one has to allow for anti-isomorphisms.
\end{rmk}

\subsection{Other isomorphism results for convolution operators}
Wendel's appraised result motivated a number of authors to search for $L^p$ analogs of it. A series of
intermediate results culminated in Parrott's theorem (\cite[Theorem~2]{Par68IsoMultiplier}) asserting that for locally
compact groups $G$ and $H$, and for $p\in (1,\infty)\setminus\{2\}$, if there is a linear isometry $T\colon L^p(G)\to L^p(H)$
such that $T(\xi\ast\eta)=T(\xi)\ast T(\eta)$ whenever $\xi,\eta,\xi\ast\eta\in L^p(G)$, and similarly for $T^{-1}$,
then $G$ and $H$ are isomorphic. In \autoref{prp:Parrott}, we show that our \autoref{prp:IsoCVp:IsoWkClosed} is formally
stronger, by deducing Parrott's result as a consequence of it.
Another reason to reprove Parrott's result is the fact that it relies on Lamperti's imprecise description of
the invertible isometries of a $\sigma$-finite $L^p$-space (see our comments before \autoref{prp:UCVp:identifyUCVp}).

Let $G$ be a locally compact group, and let $p\in(1,\infty)$.
We say that $f\in L^p(G)$ is a left multiplier if $f\ast\xi$ belongs to $L^p(G)$ for each $\xi\in L^p(G)$, and if
the left convolution map $\lambda_p(f)\colon L^p(G)\to L^p(G)$ is bounded.

\begin{lma}
\label{prp:partProdWkClosure}
Let $G$ be a locally compact group, and let $p\in(1,\infty)$. % with $p\neq 2$.
%Consider $L^p(G)$ as a Banach space with a partially defined multiplication given by convolution $f\ast g$ for $f,g\in L^p(G)$ whenever $f\ast g$ belongs to $L^p(G)$.
%Each such left multiplier defines an operator on $L^p(G)$.
Let $A$ denote the \weakStar-closure in $\Bdd(L^p(G))$ of the set $\{\lambda_p(f) \colon f\in L^p(G) \text{ is left multiplier} \}$.
%Set $A:=\{f\in L^p(G) : f \text{ is left multiplier}\}$.
%Then $A$ is a closed subspace of $L^p(G)$ where multiplication is defined everywhere.
%Consider the map $\varphi\colon A\to\Bdd(L^p(G))$ given by $\varphi(f)(\xi):=f\ast\xi$.
Then $A$ is a \weakStar-closed subalgebra of $\Bdd(L^p(G))$ satisfying $\PM_p(G)\subseteq A\subseteq\CV_p(G)$.
\end{lma}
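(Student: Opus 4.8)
The plan is to verify, one by one, the three assertions hidden in the statement: that $A$ is \weakStar-closed (immediate from the definition), that $A$ is closed under multiplication, that $\PM_p(G)\subseteq A$, and that $A\subseteq\CV_p(G)$. Write $S=\{\lambda_p(f)\colon f\in L^p(G)\text{ is a left multiplier}\}$, so $A=\overline{S}^{w^*}$, and recall that since $L^p(G)$ is reflexive, $\Bdd(L^p(G))=\big(L^p(G)\tensMax L^{p'}(G)\big)^*$ is a dual Banach algebra; that is, for every fixed $b\in\Bdd(L^p(G))$ both $a\mapsto ab$ and $a\mapsto ba$ are \weakStar-continuous.

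For $A\subseteq\CV_p(G)$: a direct computation with the definitions (an application of Fubini's theorem) shows that left convolution commutes with right translation, so $\lambda_p(f)\rho_p(s)=\rho_p(s)\lambda_p(f)$ for every left multiplier $f\in L^p(G)$ and every $s\in G$; hence $S\subseteq\CV_p(G)$. Now $\CV_p(G)=\bigcap_{s\in G}\{a\in\Bdd(L^p(G))\colon a\rho_p(s)-\rho_p(s)a=0\}$ is an intersection of kernels of \weakStar-continuous linear maps, hence \weakStar-closed, so $A=\overline{S}^{w^*}\subseteq\CV_p(G)$. For $\PM_p(G)\subseteq A$: every $f\in C_c(G)$ lies in $L^1(G)\cap L^p(G)$ and is a left multiplier on $L^p(G)$ by Young's inequality, so $\lambda_p(C_c(G))\subseteq S\subseteq A$. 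Since $C_c(G)$ is norm-dense in $L^1(G)$ and $\lambda_p\colon L^1(G)\to\Bdd(L^p(G))$ is contractive, the norm-closure of $\lambda_p(C_c(G))$ equals $\PF_p(G)$; as $A$ is \weakStar-closed (hence norm-closed), $\PF_p(G)\subseteq A$, and taking \weakStar-closures yields $\PM_p(G)=\overline{\PF_p(G)}^{w^*}\subseteq A$.

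It remains to see that $A$ is closed under multiplication. First, $S$ itself is a subalgebra of $\Bdd(L^p(G))$: it is visibly a linear subspace, and if $f,g\in L^p(G)$ are left multipliers then $h:=f*g$ lies in $L^p(G)$ (because $g\in L^p(G)$ and $f$ is a left multiplier), is again a left multiplier (for $\xi\in L^p(G)$ one has $h*\xi=f*(g*\xi)\in L^p(G)$, using that $g$ and then $f$ are left multipliers), and satisfies $\lambda_p(h)=\lambda_p(f)\lambda_p(g)$ by associativity of convolution; hence $S\cdot S\subseteq S$. Now one uses the standard fact that the \weakStar-closure of a subalgebra of a dual Banach algebra is again a subalgebra: fixing $b\in S$, the set $\{a\in\Bdd(L^p(G))\colon ab\in A\}$ is \weakStar-closed (by \weakStar-continuity of $a\mapsto ab$) and contains $S$, hence contains $A$; then, fixing $a\in A$, the set $\{b\in\Bdd(L^p(G))\colon ab\in A\}$ is \weakStar-closed and contains $S$, hence contains $A$; thus $A\cdot A\subseteq A$.

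The only step requiring genuine care is the last one: the dual-Banach-algebra structure of $\Bdd(L^p(G))$ — that is, separate \weakStar-continuity of multiplication, which holds precisely because $L^p(G)$ is reflexive — is essential, together with the routine but slightly fiddly bookkeeping showing that the left multipliers in $L^p(G)$ form an algebra under convolution on which $\lambda_p$ is multiplicative. The remaining inclusions are just an assembly of facts already recorded in the excerpt.
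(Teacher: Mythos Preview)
Your proof is correct and follows essentially the same approach as the paper's: both verify that the set of left multipliers is closed under convolution (giving a subalgebra), check the commutation $\lambda_p(f)\rho_p(s)=\rho_p(s)\lambda_p(f)$ to get $A\subseteq\CV_p(G)$, and use $C_c(G)\subseteq A_0$ together with density to obtain $\PM_p(G)\subseteq A$. You are simply more explicit than the paper about why the \weakStar-closure of a subalgebra is again a subalgebra (via separate \weakStar-continuity of multiplication) and why $\CV_p(G)$ is \weakStar-closed, points the paper leaves implicit.
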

\begin{proof}
Set $A_0= \{f \in L^p(G)\colon f \text{ is left multiplier} \}$.
Given $f,g\in A_0$, we have $f\ast g\in A_0$, which implies that $A$ is a subalgebra of $\Bdd(L^p(G))$.

Recall that $\rho\colon G\to\Bdd(L^p(G))$ denotes the right regular representation.
Given $\xi\in L^p(G)$ and $s\in G$, we have
\[
(\rho_s\lambda_p(f))(\xi)
= \rho_s(f\ast\xi)
= f\ast\xi\ast\delta_{s^{-1}}
= \lambda_p(f)(\xi\ast\delta_{s^{-1}})
%= \lambda_p(f)(\rho_s(\xi))
= (\lambda_p(f)\rho_s)(\xi).
\]
This shows that $\lambda_p(f)$ belongs to $\CV_p(G)$, and thus $A\subseteq\CV_p(G)$.

Lastly, since $C_c(G)$ is contained in $A_0$, it follows that $A$ contains the \weakStar-closure of $\lambda_p(C_c(G))$.
Said closure is easily seen to be $\PM_p(G)$, using that $C_c(G)$ is norm-dense in $L^1(G)$, and the proof is complete.
\end{proof}

%==========================================================================================
We now show how to reprove \cite[Theorem~2]{Par68IsoMultiplier}.

%==========================================================================================
\begin{thm}
\label{prp:Parrott}
Let $G$ and $H$ be locally compact groups, and let $p\in(1,\infty)$ with $p\neq 2$.
Let $T\colon L^p(G)\to L^p(H)$ be a surjective, linear isometry such that $T(f\ast g)=T(f)\ast T(g)$
whenever $f,g,f\ast g\in L^p(G)$, and such that $T^{-1}(f\ast g)=T^{-1}(f)\ast T^{-1}(g)$ whenever $f,g,f\ast g\in L^p(H)$.
Then $G$ and $H$ are isomorphic as topological groups.
\end{thm}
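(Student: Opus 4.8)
The plan is to show that the isometry $T$ implements an isometric isomorphism between two \weakStar-closed subalgebras sitting between $\PM_p$ and $\CV_p$, and then to invoke \autoref{prp:IsoCVp:IsoWkClosed}. Concretely, I would work with the conjugation map $\Theta\colon\Bdd(L^p(G))\to\Bdd(L^p(H))$ defined by $\Theta(a)=TaT^{-1}$. Since $T$ is a surjective linear isometry, $\Theta$ is an isometric isomorphism of Banach algebras. The first point to verify is that $\Theta$ is \weakStar-continuous: writing $\Bdd(L^p(G))=(L^p(G)\tensMax L^{p'}(G))^*$ and similarly for $H$, one computes $\langle\Theta(a),\xi\otimes\eta\rangle=\langle a,(T^{-1}\xi)\otimes(T^*\eta)\rangle$ for $\xi\in L^p(H)$ and $\eta\in L^{p'}(H)$, where $T^*$ is the (surjective, isometric) Banach-space adjoint of $T$. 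Hence $\Theta$ is the adjoint of the contraction $L^p(H)\tensMax L^{p'}(H)\to L^p(G)\tensMax L^{p'}(G)$ induced by $\xi\otimes\eta\mapsto(T^{-1}\xi)\otimes(T^*\eta)$, and is therefore \weakStar-continuous.

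The key computation is that $\Theta$ carries left-convolution operators by left multipliers to left-convolution operators by left multipliers. Let $f\in L^p(G)$ be a left multiplier. For $\xi\in L^p(H)$ put $\eta=T^{-1}\xi\in L^p(G)$; then $f\ast\eta\in L^p(G)$ because $f$ is a left multiplier, so the partial multiplicativity hypothesis, applied to $f,\eta,f\ast\eta\in L^p(G)$, gives
\[
\Theta(\lambda_p(f))(\xi)=T(f\ast\eta)=T(f)\ast T(\eta)=Tf\ast\xi.
\]
In particular $Tf\ast\xi\in L^p(H)$ for every $\xi$, and the assignment $\xi\mapsto Tf\ast\xi$ agrees with the bounded operator $\Theta(\lambda_p(f))$; thus $Tf$ is a left multiplier of $L^p(H)$ and $\Theta(\lambda_p(f))=\lambda_p(Tf)$. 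Running the same argument with $T^{-1}$ in place of $T$ shows that $\Theta^{-1}(\lambda_p(g))=\lambda_p(T^{-1}g)$ for every left multiplier $g$ of $L^p(H)$.

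To conclude, let $A$ and $B$ denote the \weakStar-closures in $\Bdd(L^p(G))$ and $\Bdd(L^p(H))$ of the sets of left-convolution operators by left multipliers, as in \autoref{prp:partProdWkClosure}; that proposition gives that $A$ and $B$ are \weakStar-closed subalgebras with $\PM_p(G)\subseteq A\subseteq\CV_p(G)$ and $\PM_p(H)\subseteq B\subseteq\CV_p(H)$. By the previous paragraph, $\Theta$ maps the generating set of $A$ into that of $B$; since $\Theta$ is \weakStar-continuous and $B$ is \weakStar-closed, it follows that $\Theta(A)\subseteq B$, and symmetrically $\Theta^{-1}(B)\subseteq A$, so $\Theta$ restricts to an isometric isomorphism $A\cong B$. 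An application of \autoref{prp:IsoCVp:IsoWkClosed} then yields the desired isomorphism $G\cong H$ of topological groups. The only mildly delicate part is the bookkeeping of the domains in the partial multiplicativity hypothesis, together with the verification that $Tf$ is a genuine left multiplier rather than merely a formal convolver; the \weakStar-continuity of $\Theta$ and the closing appeals to \autoref{prp:partProdWkClosure} and \autoref{prp:IsoCVp:IsoWkClosed} are routine.
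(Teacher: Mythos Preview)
Your argument is correct and follows essentially the same route as the paper: define the conjugation map $\Theta=T(\,\cdot\,)T^{-1}$, show it takes $\lambda_p(f)$ to $\lambda_p(Tf)$ for left multipliers $f$ via the partial multiplicativity hypothesis, use \weakStar-continuity to pass to the \weakStar-closures from \autoref{prp:partProdWkClosure}, and then appeal to \autoref{prp:IsoCVp:IsoWkClosed}. Your write-up is in fact slightly more careful than the paper's in that you explicitly verify the \weakStar-continuity of $\Theta$ via the predual and explicitly check that $Tf$ is a genuine left multiplier.
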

\begin{proof}
As in the proof of \autoref{prp:partProdWkClosure}, we set
\begin{align*}
A_0 = \{f \in L^p(G) \colon f \text{ is left multiplier} \},\andSep
A = \overline{ \lambda_p(A_0) }^{w^*} \subseteq \Bdd(L^p(G)),
\end{align*}
and define $B_0\subseteq L^p(H)$ and $B\subseteq \Bdd(L^p(H))$ analogously, using $H$ in place of $G$.
Then $T$ induces an isometric isomorphism of Banach algebras
\[
T_*\colon\Bdd(L^p(G))\to\Bdd(L^p(H)),
\]
given by $T_*(a)=T\circ a\circ T^{-1}$ for $a\in\Bdd(L^p(G))$.

Next, we show that $T_*$ restricts to an isometric isomorphism of Banach algebras $A\to B$.
%The assumptions imply that $T$ restricts to a bijective, multiplicative map $A_0\to B_0$.
Let $f\in A_0$.
To show that $T_*(\lambda_p(f))=\lambda_p(T(f))$, let $\xi\in L^p(H)$.
Using the assumption on $T$ at the third step, we get
\[
T_*(\lambda_p(f))(\xi)
= (T\circ\lambda_p(f)\circ T^{-1})(\xi)
= T( f\ast T^{-1}(\xi) )
= T(f) \ast \xi
= \lambda_p(T(f))(\xi),
\]
as desired.
Thus, $T_\ast$ maps $\lambda_p(A_0)$ to $\lambda_p(B_0)$.
Since $T_\ast$ is \weakStar-continuous, we get that $T_\ast$ maps $A$ to $B$.
The same argument applied for $T^{-1}$ shows that $T_\ast$ maps $A$ isometrically onto $B$.
Finally, it follows from \autoref{prp:IsoCVp:IsoWkClosed} that $G$ and $H$ are isomorphic as topological groups.
\end{proof}

\section{An application: the reflexivity conjecture for \texorpdfstring{$p$}{p}-convolution algebras}

In this final section, we apply the methods developed in \autoref{sec:UnitGpConvAlg} to confirm a
conjecture of Gal\'e-Ransford-White \cite{GalRanWhi_weakly_1992} for $p$-convolution algebras.
Their conjecture asserts that every reflexive, amenable Banach algebra is automatically finite
dimensional, and it has been confirmed in a number of situations; see, for example, the introduction
of \cite{Run_Banach_2001}. We begin with some preparatory results, which are interesting on their own right. 

\begin{prp}\label{prp:UnitGpTopGp}
Let $A$ be a unital, dual Banach algebra.
Then $\mathcal{U}(A)$ is a topological group for the \weakStar-topology.
\end{prp}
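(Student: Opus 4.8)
The strategy is to pass to a concrete reflexive representation of $A$, on whose group of invertible isometries the \weakStar-topology is already a well-behaved group topology, and then transport this back to $A$. First I would invoke Daws' theorem on reflexive representability of dual Banach algebras \cite[Corollary~3.8]{Daw07DualBAlgReprInj}: there exist a reflexive Banach space $E$ and an isometric, \weakStar-continuous algebra homomorphism $\pi$ of $A$ into $\Bdd(E)$; compressing to the range of $\pi(1_A)$ (an idempotent of norm $\|1_A\|=1$, so the compression is again isometric and \weakStar-continuous), one may assume in addition that $\pi(1_A)=\id_E$. Since $E$ is reflexive, $\Bdd(E)$ is the dual of $E\tensMax E^*$, so the \weakStar-topology on $\Bdd(E)$ is available, and \autoref{pgr:topologiesBddE} applies to it.

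Next I would check that $\pi$ restricts to an injective group homomorphism $\Unitary(A)\to\Isom(E)$. Indeed, if $u\in\Unitary(A)$ then $\pi(u)$ is invertible in $\Bdd(E)$ with $\pi(u)^{-1}=\pi(u^{-1})$, and since $\pi$ is unital and isometric, $\|\pi(u)\|=\|u\|=1$ and $\|\pi(u)^{-1}\|=\|u^{-1}\|=1$; hence $\pi(u)$ is a surjective linear isometry of $E$. Moreover, the closed unit ball $A_{\leq1}$ is \weakStar-compact by Banach--Alaoglu, and $\pi|_{A_{\leq1}}$ is a \weakStar-continuous injection into the Hausdorff space $(\Bdd(E),w^*)$, hence a homeomorphism onto its image. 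As $\Unitary(A)\subseteq A_{\leq1}$, it follows that $\pi$ is simultaneously a group isomorphism and a homeomorphism from $\Unitary(A)_{w^*}$ onto $\pi(\Unitary(A))\subseteq\Isom(E)$, the latter carrying the subspace topology from $(\Bdd(E),w^*)$; and by \autoref{pgr:topologiesBddE} (using reflexivity of $E$), on $\Isom(E)$ this topology agrees with the one induced by the strong operator topology (SOT).

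Finally I would record the elementary fact that $\Isom(E)$ is a topological group for SOT: if $u_j\to u$ and $v_j\to v$ in SOT with all operators in $\Isom(E)$, then for $\xi\in E$,
\[
\| u_jv_j\xi-uv\xi \| \leq \| v_j\xi-v\xi \| + \| (u_j-u)(v\xi) \| \longrightarrow 0,
\]
using $\|u_j\|\leq1$, and
\[
\| u_j^{-1}\xi-u^{-1}\xi \| = \| u_j^{-1}\big(\xi-u_j u^{-1}\xi\big) \| = \| (u-u_j)u^{-1}\xi \| \longrightarrow 0,
\]
using that $u_j^{-1}$ is isometric. Hence every subgroup of $\Isom(E)$ is a topological group in the relative SOT; in particular $\pi(\Unitary(A))$ is, and transporting along the isomorphism of topological groups $\pi$ shows that $\Unitary(A)_{w^*}$ is a topological group.

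No single step is hard here; the substance of the argument — and the reason the separate \weakStar-continuity of multiplication noted in the preceding remark is not by itself enough — is precisely that one must leave $A$ and realize it concretely on a reflexive space, where the \weakStar-topology on the isometry group coincides with SOT and is therefore a genuine group topology. Marshalling Daws' representation theorem together with the topology identifications of \autoref{pgr:topologiesBddE} is thus the crux.
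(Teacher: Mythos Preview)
Your proof is correct and follows essentially the same approach as the paper: invoke Daws' reflexive representability theorem, reduce to a unital representation by compressing to the range of the image of the unit, identify the \weakStar-topology on $\Unitary(A)$ with the SOT on its image in $\Isom(E)$ via \autoref{pgr:topologiesBddE}, and conclude using that $\Isom(E)_{\mathrm{SOT}}$ is a topological group. You supply more detail than the paper does (the Banach--Alaoglu compactness argument for the homeomorphism onto the image, and the explicit verification that multiplication and inversion are SOT-continuous on $\Isom(E)$), but the route is the same.
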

\begin{proof}
Use \cite[Corollary~3.8]{Daw07DualBAlgReprInj} to find a reflexive Banach space $E$ and an isometric,
\weakStar-continuous homomorphism $\varphi\colon A\to\Bdd(E)$ that identifies $A$ with a \weakStar-closed subalgebra of $\Bdd(E)$.

\textbf{Claim:} \emph{We may assume that $\varphi$ is unital}. Indeed, $\varphi(1)$ is a contractive idempotent in $\Bdd(E)$,
and the subspace $E_0=\varphi(1)(E)$ is also reflexive. Moreover, the induced representation $\varphi_0\colon A\to\Bdd(E_0)$
is unital.

Then $\varphi$ maps $\mathcal{U}(A)$ onto a subgroup of $\Isom(E)$.
Moreover, this identifies the \weakStar-topology on $\Unitary(A)$ with the \weakStar-topology on $\varphi(\Unitary(A))$ coming from $\Bdd(E)$.
As noted in \autoref{pgr:topologiesBddE}, the \weakStar-topology and the SOT-topology agree on $\Isom(E)$.
Thus, $\varphi$ induces an isomoprhism $\Unitary(A)_{w^*}\cong\varphi(\Unitary(A))_{SOT}$ of topologcal groups.
Since $\Isom(E)_{SOT}$ is a topological group, so is its subgroup $\varphi(\Unitary(A))_{SOT}$ with the relative topology, and hence also $\Unitary(A)_{w^*}$.
\end{proof}

%==========================================================================================
\begin{thm}
\label{prp:normWkTopReflDual}
Let $A$ be a unital, dual Banach algebra that is reflexive as a Banach space.
Then the norm topology and the \weakStar-topology agree on $\Unitary(A)$.
\end{thm}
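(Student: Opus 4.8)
The plan is to compare the two topologies on $\Unitary(A)$ through the left regular representation of $A$ on itself. Since the norm topology is always finer than the \weakStar-topology, it suffices to prove that every net in $\Unitary(A)$ which converges \weakStar{} to an element of $\Unitary(A)$ converges to the same limit in norm; this makes the identity map from $\Unitary(A)$ with the \weakStar-topology to $\Unitary(A)$ with the norm topology continuous, and hence the two topologies agree on $\Unitary(A)$. As a preliminary observation, recall that a reflexive Banach space has $A^*$ as its unique predual (if $Y^*\cong A$ with $A$ reflexive, then $Y$ is reflexive and $Y\cong Y^{**}\cong A^*$), so the \weakStar-topology on $A$ coincides with the weak topology $\sigma(A,A^*)$.

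Consider now the left regular representation $L\colon A\to\Bdd(A)$, $L_a(b)=ab$. For $u\in\Unitary(A)$ the operator $L_u$ is a surjective linear isometry of the Banach space $A$ — it is isometric since $u$ is an isometry in $A$, and surjective with inverse $L_{u^{-1}}$ — so that $L$ maps $\Unitary(A)$ into $\Isom(A)$. The key point is that $L$ turns \weakStar-convergence in $A$ into convergence in the weak operator topology of $\Bdd(A)$: if $u_j\to u$ \weakStar{} in $A$, then for every $b\in A$ and $\phi\in A^*$ the functional $c\mapsto\phi(cb)$ belongs to $A^*$, whence
\[
\langle L_{u_j}(b),\phi\rangle = \phi(u_j b) \longrightarrow \phi(u b) = \langle L_u(b),\phi\rangle,
\]
so that $L_{u_j}\to L_u$ in the weak operator topology of $\Bdd(A)$.

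To finish, I would invoke the observations collected in \autoref{pgr:topologiesBddE}: since $A$ is a reflexive Banach space, the weak and strong operator topologies coincide on $\Isom(A)$. As $L_{u_j},L_u\in\Isom(A)$ and $L_{u_j}\to L_u$ in the weak operator topology, we conclude that $L_{u_j}\to L_u$ in the strong operator topology, that is, $\|u_j b - u b\|\to 0$ for every $b\in A$. Taking $b=1$ gives $\|u_j-u\|\to 0$, as desired.

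I do not expect any serious difficulty in carrying this out; the one point requiring care is to work with the \emph{left regular} representation rather than with the abstract reflexive representation $\varphi\colon A\to\Bdd(E)$ furnished by \autoref{prp:UnitGpTopGp}. Through $\varphi$ one would only recover the strong operator topology on $\Isom(E)$, which in general is strictly coarser than the operator norm topology on $\Bdd(E)$ and would therefore fail to deliver norm-convergence back in $A$; it is precisely the identity $\|L_a(1)\|=\|a\|$, valid because $A$ is unital, that lets the strong operator topology on $L(\Unitary(A))$ see the norm of $A$.
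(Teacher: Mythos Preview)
Your proof is correct and follows essentially the same route as the paper: both arguments use the left regular representation $A\to\Bdd(A)$, transfer \weakStar-convergence in $\Unitary(A)$ to WOT-convergence in $\Isom(A)$, invoke the equality of WOT and SOT on $\Isom(A)$ for reflexive $A$, and then evaluate at $1$ to recover norm convergence. Your closing remark explaining why the left regular representation is the correct choice (rather than an abstract reflexive representation) is a worthwhile addition that the paper leaves implicit.
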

\begin{proof}
We consider the representation $\varphi\colon A\to\Bdd(A)$ given by $\varphi(a)(b)=ab$, for $a,b\in A$.
Let $(u_j)_j$ be a net in $\Unitary(A)$, and let $u\in\Unitary(A)$.

\textbf{Claim~1:}
\emph{We have $u_j\xrightarrow{w^*}u$ if and only if $\varphi(u_j)\xrightarrow{WOT}\varphi(u)$.}
The proof follows from the following sequence of equivalences, using at the first step that multiplication in $A$ is
separately \weakStar-continuous and that $A$ is unital, and using at the second step that weak and \weakStar-convergence on $A$ agree: % (since $A$ is reflexive):
\begin{align*}
u_j\xrightarrow{w^*}u
&\quad\Leftrightarrow\quad u_ja\xrightarrow{w^*}ua \text{ for all } a\in A \\
&\quad\Leftrightarrow\quad \varphi(u_j)(a) \xrightarrow{w} \varphi(u)(a) \text{ for all } a\in A \\	
&\quad\Leftrightarrow\quad \varphi(u_j)\xrightarrow{WOT}\varphi(u).
\end{align*}

\textbf{Claim~2:}
\emph{We have $u_j\xrightarrow{\|\cdot\|}u$ if and only if $\varphi(u_j)\xrightarrow{SOT}\varphi(u)$.}
The claim is proved analogously to Claim~1, and we omit it.
%The claim follows from the following sequence of biimplications, using at the first step that multiplication
%\begin{align*}
%u_j\xrightarrow{\|\cdot\|}u
%&\Leftrightarrow u_ja\xrightarrow{\|\cdot\|}ua \text{ for all } a\in A \\
%&\Leftrightarrow \varphi(u_j)(a) \xrightarrow{\|\cdot\|} \varphi(u)(a) \text{ for all } a\in A \\	
%&\Leftrightarrow \varphi(u_j)\xrightarrow{SOT}\varphi(u).
%\end{align*}

Since $A$ is reflexive, as noted in \autoref{pgr:topologiesBddE}, the WOT- and SOT-topology agree on $\Isom(A)$.
Using Claims~1 and~2, the result follows.
\end{proof}

%==========================================================================================
%\begin{lma}
%\label{prp:connectionRefl}
%Let $G$ be a locally compact group, and let $p\in(1,\infty)$.
%Consider the following statements:
%\begin{enumerate}
%\item
%$\CV_p(G)$ is reflexive.
%\item
%$\PM_p(G)$ is reflexive.
%\item
%$\PF_p(G)$ is reflexive.
%\end{enumerate}
%Then the following implications hold: (1)$\Rightarrow$(2)$\Leftrightarrow$(3).
%\end{lma}
%\begin{proof}
%A closed subspace of a reflexive Banach space is again reflexive.
%This shows the implications (1)$\Rightarrow$(2)$\Rightarrow$(3).
%The implication (1)$\Rightarrow$(2) follows using .
%\end{proof}

We need to recall the definition of amenability for Banach algebras. (What we give here is really 
an equivalent characterization, due to Johnson~\cite{Joh72CohomologyBAlg}.)

\begin{dfn}
Let $A$ be a Banach algebra, and denote by $\pi\colon A\to A\widehat{\otimes}A$ the 
diagonal map. We say that $A$ is \emph{amenable} if there exists
$m\in (A\widehat{\otimes}A)^{**}$ satisfying $am=ma$ and $\pi^{\ast\ast}(m)a=a$
for all $a\in A$.
\end{dfn}

The following is the main result of this section, and confirms the reflexivity conjecture
for all Banach subalgebras of $\CV_p(G)$ containing $\PF_p(G)$. This includes, in particular,
the convolution algebras $\PF_p(G)$, $\PM_p(G)$ and $\CV_p(G)$. Its proof uses results
and ideas of a number of authors, including Herz, Phillips and Runde. The results
in Section~4 are used to show that $G$ is discrete, while we also use results from
\cite{GarThi15GpAlgLp} to prove that $G$ is compact. 

In its proof, we will need the Fig\`{a}-Talamanca-Herz algebra $A_p(G)$, which is defined
as follows. With 
\[\omega\colon L^p(G)\tensMax L^{p'}(G)\to C_0(G)\] 
denoting the linear map satisfying
$\omega(\xi\otimes\eta)(s) = \langle \lambda_p(s)\xi,\eta \rangle$,
for $\xi\in L^p(G)$, $\eta\in L^{p'}(G)$, and $s\in G$, the algebra $A_p(G)$ is the image of 
$\omega$, endowed with the quotient norm and the product inherited from $C_0(G)$. It is
a standard fact in the area that $A_p(G)$ is a canonical isometric predual of $\PM_p(G)$, 
turning $\PM_p(G)$ into a dual Banach algebra. 
Moreover, under the identifications of $\PM_p(G)$ with the dual of $A_p(G)$, and of $\Bdd(L^p(G))$ with the 
dual of $L^p(G)\widehat{\otimes} L^{p'}(G)$, the inclusion $\PM_p(G)\to\Bdd(L^p(G))$ is the transpose of $\omega$.

%==========================================================================================

%==========================================================================================
\begin{thm}
\label{prp:ReflMain}
Let $G$ be a locally compact group, and let $p\in(1,\infty)$.
Let $A$ be a Banach algebra satisfying $\PF_p(G)\subseteq A\subseteq \CV_p(G)$.
Then the following are equivalent:
\begin{enumerate}
 \item $A$ is reflexive and amenable;
 \item $A$ is finite-dimensional;
 \item $G$ is finite. 
\end{enumerate}
\end{thm}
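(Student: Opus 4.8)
The plan is to prove the cycle (2)$\Rightarrow$(1)$\Rightarrow$(3)$\Rightarrow$(2). The implications (3)$\Rightarrow$(2) and (2)$\Rightarrow$(1) are the routine ones: if $G$ is finite then $L^p(G)$ is finite-dimensional, hence so is $\Bdd(L^p(G))$ and a fortiori $A$; and a finite-dimensional Banach algebra is automatically reflexive, while it is amenable because $\PF_p(G)=\CV_p(G)=\CC[G]$ carries the averaging element built from the normalized sum over $G$ (alternatively, cite that group algebras of finite groups are amenable Banach algebras). So the entire content is the implication (1)$\Rightarrow$(3), which I would break into two halves: first $G$ is discrete, then $G$ is compact; together these force $G$ finite.

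\emph{Step 1: $A$ reflexive $\Rightarrow$ $G$ discrete.} Here I would use the reflexivity of $A$ together with Daws' reflexive-representability theorem. Since $A$ is reflexive as a Banach space, its dual $A^*$ serves as a predual, and one checks that multiplication is separately \weakStar-continuous (for the left module structure this is automatic on a reflexive space; for the right one uses that $A$ sits inside $\Bdd(L^p(G))$ and the module actions are continuous), so $A$ is a unital dual Banach algebra. Now I want to play $\PF_p(G)\subseteq A$ against the closed unit ball of $A$: the unit ball $A_{\leq 1}$ is \weakStar-compact, hence \weakStar-closed in $\Bdd(L^p(G))$, so \autoref{cor:IntermediateSubalg} applies and gives $\TT\times G\cong\Unitary(A)_{w^*}$ and $G\cong\pi_0(\Unitary(A))_{w^*}$. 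But by \autoref{prp:normWkTopReflDual} the norm and \weakStar-topologies coincide on $\Unitary(A)$ (this is where reflexivity of $A$ enters crucially), so $\TT\times G$ is homeomorphic to $\Unitary(A)$ in the \emph{norm} topology. Using the norm formula from \autoref{prp:UCVp:identifyUCVp} — $\|\beta\lambda_p(s)-\gamma\lambda_p(t)\|=\max\{|\beta-\gamma|,2\delta_{s,t}\}$ — distinct group elements are at norm-distance $2$, so $G$ is discrete in the topology transported from the norm topology on $\Unitary(A)$; since that topology agrees with the original one on $G$, the group $G$ is discrete. (For a group with $p=2$ the norm formula degenerates, which is exactly why $p\neq2$ is not needed here: but note the theorem statement does allow all $p\in(1,\infty)$, so one should double-check that \autoref{cor:IntermediateSubalg} and \autoref{prp:UCVp:identifyUCVp}, which assume $p\neq2$, are only used for $p\neq2$; for $p=2$ the statement $A\subseteq\CV_2(G)=L(G)$ reflexive and amenable forcing $G$ finite is the classical fact that an amenable von Neumann algebra that is reflexive is finite-dimensional, or one reduces to that — this edge case must be handled separately by invoking the $p=2$ literature.)

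\emph{Step 2: $G$ discrete, $A$ amenable $\Rightarrow$ $G$ compact, hence finite.} Now that $G$ is discrete, I would exploit amenability. The key input is an argument of Phillips together with results of Runde and of Gardella--Thiel \cite{GarThi15GpAlgLp}. First, amenability of $A$ passes to the closed subalgebra… no — instead, amenability of $A$ forces $\PF_p(G)$ to have a bounded approximate diagonal of a suitable sort, or more directly: a unital amenable Banach algebra that sits between $\PF_p(G)$ and $\CV_p(G)$ has a bounded net witnessing the diagonal condition, and restricting the standard argument one shows $\PF_p(G)$ is amenable (using that $\PF_p(G)$ is a closed two-sided ideal-like subalgebra with approximate identity and $A$ amenable forces its "hereditary" pieces to be amenable — more carefully, one uses that amenability of $A$ implies $G$ is an amenable group, via the quotient $A\to A/(\text{augmentation-type ideal})$ or via the fact that amenability of $\PF_p(G)$-type algebras detects amenability of $G$). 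Once $G$ is known to be a \emph{discrete amenable} group, \cite{GarThi15GpAlgLp} identifies $\PF_p(G)$ with the full group $L^p$-operator algebra, and one invokes the argument (due to Runde \cite{Run_representations_2005} in spirit) that reflexivity of this algebra forces $G$ to be finite: an infinite discrete amenable group has $\ell^p(G)$ infinite-dimensional and $\PF_p(G)$ containing an isometric copy of $\ell^p(\ZZ)$-type subalgebras or a quotient that is not reflexive, contradicting reflexivity of $A$ (reflexivity passes to closed subspaces, so $\PF_p(G)$ is reflexive). Concretely: $\ell^2(\ZZ)\subseteq\PF_p(\ZZ)$… rather, the cleanest route is that a reflexive Banach algebra with a bounded approximate identity must be unital, and a unital subalgebra of $\Bdd(\ell^p(G))$ that is reflexive and contains $\PF_p(G)$ forces $\PF_p(G)$ unital, forcing $G$ finite (since $\PF_p(G)$ unital $\iff$ $\lambda_p$ has the constant-type function… $\iff$ $G$ compact $\iff$, being discrete, $G$ finite).

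\emph{Main obstacle.} The delicate step is Step 2: upgrading "reflexive and amenable" to "discrete amenable group" and then to "finite". The amenability of $A$ must be transferred to amenability of $G$ as a group (not just of some algebra), and this requires care about which subalgebra/quotient to use and citing the right result from \cite{GarThi15GpAlgLp}; and the final reflexivity-forces-finiteness step for discrete amenable $G$ needs the structural input that $\PF_p(G)$ is reflexive (inherited from $A$) plus a concrete obstruction showing infinite $G$ gives a non-reflexive piece — for instance exhibiting $c_0$ or $\ell^1$ inside $\PF_p(G)$ when $G$ is infinite. The discreteness step (Step 1) is by contrast a clean consequence of \autoref{prp:normWkTopReflDual} and \autoref{cor:IntermediateSubalg}, modulo verifying $A$ is a dual Banach algebra, which reflexivity makes essentially automatic.
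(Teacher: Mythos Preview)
Your Step~1 (reflexive $\Rightarrow$ $G$ discrete) is essentially the paper's argument: verify that the inclusion $A\hookrightarrow\Bdd(L^p(G))$ is \weakStar-continuous (automatic from reflexivity, since the inclusion is then the adjoint of the obvious map into $A^*$), invoke \autoref{cor:IntermediateSubalg}, and combine \autoref{prp:normWkTopReflDual} with the norm formula \eqref{prp:UCVp:identifyUCVp:eqNorm}. The paper likewise disposes of $p=2$ by declaring it classical.

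Your Step~2, however, has a genuine gap. The ``cleanest route'' you settle on at the end fails outright: for a \emph{discrete} group $G$, the algebra $\PF_p(G)$ is \emph{always} unital, since $\delta_e\in\ell^1(G)$ and $\lambda_p(\delta_e)=1\in\PF_p(G)$. So ``$\PF_p(G)$ unital forces $G$ finite'' is simply false, and the argument produces no contradiction. More broadly, none of your suggested mechanisms for transferring amenability of $A$ to amenability of $G$ actually work: there is no augmentation-type character on $A$ in general, and amenability does not pass to arbitrary closed subalgebras, so ``$\PF_p(G)$ amenable'' is not available either.

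The paper's route through Step~2 is quite different from anything you propose, and hinges on an object you do not mention: the Fig\`a-Talamanca--Herz algebra $A_p(G)\subseteq C_0(G)$. For amenability of $G$ (Claim~2), the paper uses Phillips' conditional expectation $E\colon\Bdd(\ell^p(G))\to A^c$ from \cite{Phi14pre:LpMultDom}, combined with a trace on $A^c$ (anti-isomorphic to $\CV_p(G)$) and the multiplication embedding $m\colon\ell^\infty(G)\to\Bdd(\ell^p(G))$, to manufacture an invariant mean directly. For compactness (Claims~3 and~4): once $G$ is amenable, Runde's embedding together with \cite{GarThi15GpAlgLp} gives $A_p(G)\hookrightarrow\PF_p(G)^*$, so $A_p(G)$ inherits reflexivity from the closed subspace $\PF_p(G)\subseteq A$; the Leptin--Herz theorem then provides a bounded approximate identity in $A_p(G)$, and reflexivity forces $A_p(G)=A_p(G)^{**}$ to contain a unit. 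Since $A_p(G)\subseteq C_0(G)$, that unit is the constant function $1$, which forces $G$ compact. You were right to flag Step~2 as the obstacle, but the missing idea is the passage through $A_p(G)$ rather than $\PF_p(G)$.
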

\begin{proof}
That (2) and (3) are equivalent is clear. Moreover, when $G$ is finite, then
$\PF_p(G)=\CV_p(G)$ is finite-dimensional and amenable, showing that (3) implies
(1). It remains to show that (1) implies (3), and we divide its proof into a 
number of claims. Since the result is well-known for $p=2$, we assume that $p\neq 2$. 

%Denote by $A_\ast$ the predual of $A$. 
We first note that the inclusion $A\to \Bdd(L^p(G))$ is
\weakStar-continuous, since this map is the adjoint of the following composition
\[L^p(G)\widehat{\otimes}L^{p'}(G)\to (L^p(G)\widehat{\otimes}L^{p'}(G))^{**}\cong \Bdd(L^p(G))^* \to A^*,\]
once $A^{**}$ is canonically identified with $A$. 

\textbf{Claim 1:} \emph{$G$ is discrete.} It is well-known that $A$ has a unit,
being amenable and reflexive. By \autoref{prp:normWkTopReflDual}, there is a 
canonical group isomorphism between $\mathcal{U}(A)_{w^*}$ and 
$\mathcal{U}(A)_{\|\cdot\|}$. Hence there is a group isomorphism
$\pi_0(\mathcal{U}(A))_{w^*}\cong \pi_0(\mathcal{U}(A))_{\|\cdot\|}$. 
By \autoref{cor:IntermediateSubalg}, there is a
group isomorphism $\pi_0(\mathcal{U}(A))_{w^*}\cong G$, while 
Equation~\ref{prp:UCVp:identifyUCVp:eqNorm} in \autoref{prp:UCVp:identifyUCVp}
implies that $\pi_0(\mathcal{U}(A))_{\|\cdot\|}$ is isomorphic to $G$
with the discrete topology. The claim follows.

The proof of the following claim is inspired by work of Chris Phillips~\cite{Phi14pre:LpMultDom},
and does not require reflexivity of $A$. We denote by $\rho_p\colon \ell^1(G)\to \Bdd(\ell^p(G))$ the
right regular representation.%, and observe that $\rho_p$ is anti-conjugate to $\lambda_p$. 

\textbf{Claim 2:} \emph{$G$ is amenable.} Observe that the commutant $A^c$ of $A$ in $\Bdd(\ell^p(G))$ agrees
with the double commutant of $\rho_p(\ell^1(G))$. In particular, $A^c$ is anti-isomorphic to $\CV_p(G)$,
and therefore there exists a bounded, linear map $\tau\colon A^c\to \mathbb{C}$ satisfying $\tau(\rho_p(g)b\rho_p(g^{-1}))=\tau(b)$
for all $g\in G$ and all $b\in A^c$. Use Proposition~4.4 in~\cite{Phi14pre:LpMultDom} and the fact that $G$ is discrete
to find a unital bounded linear map $E\colon \Bdd(\ell^p(G))\to A^c$ satisfying $E(bxc)=bE(x)c$ for all $b,c\in A^c$ and all 
$x\in \Bdd(\ell^p(G))$. With $m\colon \ell^{\infty}(G)\to \Bdd(\ell^p(G))$ denoting the canonical embedding as
multiplication operators, it follows that $\tau\circ E \circ m\colon \ell^\infty(G)\to \mathbb{C}$ is a non-zero, translation-invariant
bounded linear map. In particular, $\tau\circ E \circ m/\|\tau\circ E \circ m\|$ is an invariant mean, and $G$ is amenable.

\textbf{Claim 3:} \emph{$A_p(G)$ is relexive}. Note first that $\PF_p(G)$ is reflexive, 
being a Banach subspace of $A$. Recall that $F^p_{\mathrm{QS}}(G)$ denotes the universal completion
of $L^1(G)$ with respect to isometric representations of $G$ on $QSL^p$-spaces; 
see~\cite[Definition~2.1 and Remark~2.9]{GarThi15GpAlgLp}. By \cite[Corollary 5.2]{Run_representations_2005},
and since $G$ is amenable, there is a natural isometric homomorphism $A_p(G) \hookrightarrow (F^p_{\mathrm{QS}}(G))^*$.
Moreover, and again since $G$ is amenable, there is a natural isometric isomorphism $F^p_{\mathrm{QS}}(G)\cong \PF_p(G)$, 
by \cite[Theorem~3.7]{GarThi15GpAlgLp}. The following composition
\[A_p(G)\hookrightarrow (F^p_{\mathrm{QS}}(G))^*\cong (\PF_p(G))^*\] 
shows that $A_p(G)$ is a Banach subspace of the reflexive space $(\PF_p(G))^*$, and it is therefore reflexive. 

The next claim completes the proof of the theorem.

\textbf{Claim 4:} \emph{$G$ is compact.} 
By the Leptin-Herz theorem (see \cite[Theorem~10.4]{Pie_amenable_1984}), $A_p(G)$ contains a bounded approximate identity.
It follows that the bidual $A_p(G)^{**}$ contains a mixed unit (for the left and right Arens products).
Using that $A_p(G)$ is reflexive, we deduce that 
$A_p(G)^{**}=A_p(G)\subseteq C_0(G)$ contains the constant function with value $1$. Thus $G$ is compact.
\end{proof}

%\bibliographystyle{siam}
%\bibliography{References}

\end{document}